\newtheorem{theorem}{Theorem}
\theoremstyle{plain}
\newtheorem{corollary}{Corollary}
\newtheorem{lemma}{Lemma}
\newtheorem{proposition}{Proposition}
\newtheorem{remark}{Remark}
\numberwithin{equation}{section}
\def\eps{\varepsilon}
\def\Jac{{\rm Jac}}
\def\dis{{\rm dist}}
\def\diam{{\rm diam}}
\def\be #1{\begin{equation}#1\end{equation}}
\def\bes #1{\begin{equation}\begin{split}#1\end{split}\end{equation}}
\begin{document}
\title{ Singular SRB measures for a non 1--1 map of the unit square}

\author[P. G\'ora]{Pawe\l\ G\'ora }
\address[P. G\'ora]{Department of Mathematics and Statistics, Concordia
University, 1455 de Maisonneuve Blvd. West, Montreal, Quebec H3G 1M8, Canada}
\email[P. G\'ora]{pawel.gora@concordia.ca}

\author[A. Boyarsky]{Abraham Boyarsky }
\address[A. Boyarsky]{Department of Mathematics and Statistics, Concordia
University, 1455 de Maisonneuve Blvd. West, Montreal, Quebec H3G 1M8, Canada}
\email[A. Boyarsky]{abraham.boyarsky@concordia.ca}

\author[Z. Li]{Zhenyang Li }
\address[Z. Li]{Department of Mathematics, Honghe University, Mengzi, Yunnan 661100, China}
\email[Z. Li]{zhenyangemail@gmail.com}

\thanks{The research of the authors was supported by NSERC grants. The research of Z. Li is also supported
by NNSF of China (No. 11161020 and No. 11361023)}
\subjclass[2000]{37A05, 37A10, 	37E30, 37D20, 37D30}
\date{\today }
\keywords{SRB measures, conditional measures, piecewise linear two-dimensional maps}

\begin{abstract}  
We consider  a map of the unit square which is not 1--1, such as the memory map studied in \cite{MwM1}
Memory maps are 
 defined as follows: $x_{n+1}=M_{\alpha
}(x_{n-1},x_{n})=\tau (\alpha \cdot x_{n}+(1-\alpha )\cdot x_{n-1}),$ where $\tau$ is a one-dimensional map 
on $I=[0,1]$ and  
$0<\alpha <1$ determines how much memory is being used.    In this paper we
let $\tau $ to be the symmetric tent map. To study the dynamics of $M_\alpha$, we consider the two-dimensional map
$$ G_{\alpha }:[x_{n-1},x_{n}]\mapsto [x_{n},\tau (\alpha \cdot x_{n}+(1-\alpha )\cdot x_{n-1})]\, .$$
The   map $G_\alpha$ for $\alpha\in(0,3/4]$ was studied in \cite{MwM1}. In this paper we prove that
for $\alpha\in(3/4,1)$ the map $G_\alpha$ admits a singular Sinai-Ruelle-Bowen measure. We do this 
by applying Rychlik's results for the Lozi map. However, unlike the Lozi map, the maps $G_\alpha$ are not invertible
which creates   complications that we are able to overcome.
\end{abstract}

\maketitle

{Department of Mathematics and Statistics, Concordia University, 1455 de
Maisonneuve Blvd. West, Montreal, Quebec H3G 1M8, Canada}

and

Department of Mathematics, Honghe University, Mengzi, Yunnan 661100, China

\smallskip

E-mails: {abraham.boyarsky@concordia.ca}, {pawel.gora@concordia.ca}, {%
zhenyangemail@gmail.com}.

\section{ Introduction}

Let $\tau $ be a piecewise, expanding map on $I=[0,1]$. We consider a process
$$x_{n+1}=M_{\alpha }(x_{n})\equiv \tau (\alpha \cdot x_{n}+(1-\alpha
)\cdot x_{n-1})\ ,\ 0<\alpha <1, $$
which we call a map with memory since the next state $x_{n+1}$ depends not only on current state $x_n$ but also
on the past $x_{n-1}$. Note that $M_\alpha$ is a map from $[0,1]^2$ to $[0,1]$ and hence is not a dynamical system.

A natural method to study the long term behaviour of the process $M_\alpha$,
is to study the invariant measures of the two dimensional transformation 
\begin{equation*}
G_{\alpha }:[x_{n-1},x_{n}]\mapsto \lbrack x_{n},M_{\alpha
}(x_{n})]=[x_{n},\tau (\alpha \cdot x_{n}+(1-\alpha )\cdot x_{n-1})]\, .
\end{equation*}

In \cite{MwM1} we studied  $G_\alpha$ with the tent map $\tau(x)=1-2|x-1/2|$, $x\in I$,
for $\alpha \in (0,3/4]$. For $0<\alpha \le 0.46,$ we proved that $G_\alpha$ admits an absolutely continuous invariant measure (acim).
We conjecture that acim exists also for  $\alpha\in [0.46,1/2)$.  
As $\alpha $ approaches $1/2$ from below, that is, as we approach a balance
between the memory state and the present state, the support of the acims
become thinner until at $\alpha =1/2$, all points have period 3 or eventually
possess period 3.  
We proved that for $\alpha=1/2$ all points (except two fixed points) are eventually 
periodic with period 3.
For $\alpha=3/4$ we proved that all points of the line $x+y=4/3$ (except the fixed point)
 are of period 2 and all other points (except $(0,0)$) are attracted to this line. 
For $1/2< \alpha < 3/4$, we prove the
existence of a global attractor: for all starting points in the square $[0,1]^2$ except $(0,0)$,
the orbits are attracted to the fixed point $(2/3,2/3).$ 

In this paper, we continue the study of transformation $G_\alpha$ for $\alpha\in (3/4,1)$ and  prove the existence of a singular
Sinai-Ruelle-Bowen measure $\mu_\alpha$. The invariant measure is singular with respect to Lebesgue measure since for $\alpha\in(3/4,1)$ the determinants of the derivative matrices of $G_\alpha$ are less than one,
hence the support of the invariant measure is of Lebesgue measure 0. The invariant measure has two main properties: for Lebesgue almost every point  $x\in [0,1]^2$ and any continuous function $g:[0,1]^2\to \mathbb R$,
$$\lim_{n\to\infty} \frac 1n \sum_{k=0}^{n-1}g(G_\alpha^k x)=\mu_\alpha(g) ,$$
and the conditional measures induced by $\mu_\alpha$ on segments with expanding directions are one-dimensional absolutely continuous measures.

Our method follows closely the techniques in Rychlik \cite{Ry}  for the Lozi map.  The most important difference between the Lozi map and the $G_\alpha$'s is the fact
that our maps are not invertible. For maps that are invertible or locally invertible, there are results known \cite{Alv,Avi,Ben,Bon,Cow,San,Tas,Tsu1,Tsu2,You}. However, to the best of  our knowledge the existence of a singular SRB measure has, until now, not  been proven for any  non-invertible map.

\section{Abstract Reduction Theorem}\label{Abstract}

Similarly as Rychlik in \cite{Ry}, we will start with abstract considerations. Sections \ref{Abstract} and \ref{ACIM} are taken from \cite{Ry} almost without any changes.
We present them here for completeness, to introduce the notation and the results we need in the following sections.

Let $(X,\Sigma, m)$ be a Lebesgue measure with a $\sigma$-algebra $\Sigma$ and a probability measure $m$. Let 
$T:X\to X$ be a measurable, nonsingular mapping, i.e.,  $T_*m\ll m$. We define the Frobenius-Perron operator induced by $T$ as
$$P_T f = \frac {d(T_*(fm))}{dm}  \ \ ({\rm Radon-Nikodym\ derivative)},$$
for $f\in L^1(X,\Sigma, m)$ and we have $P_Tf\in L^1(X,\Sigma, m)$. Equivalently, we can  define $P_Tf$ as the unique element of $L^1(X,\Sigma, m)$
satisfying
$$\int_X (h\circ T)\cdot f dm= \int_X h\cdot P_T f dm ,$$
for all $h\in L^\infty(X,\Sigma, m)$. This means that operator $P_T$ is the conjugate of the Koopman operator $K_T h=h\circ T$ acting on $L^\infty(X,\Sigma, m)$.

A measurable, countable partition $\beta$ of $X$ is called regular iff for every $A\in{\beta}$, $T(A)$ is $\Sigma$-measurable  and $T_{|A}$ maps $(A,\Sigma_{|A})$ onto
$(T(A),\Sigma_{|T(A)})$ isomorphically. For any regular partition ${\beta}$ we define $g_T:X\to \mathbb R^+$ as follows:
\be{\label{def g_T} g_T(x)= \frac {d(T_*(\chi_Am))}{dm} (Tx) \ ,\ {\rm for}\ x\in A\in {\beta}.}
We can write $g_T=\sum_{A\in{\beta}} K_T(P_T \chi_A)\cdot \chi_A$. The function $g_T$ is determined up to a set of measure 0 and  does not depend on the choice of 
partition ${\beta}$. For piecewise differentiable map $T$ the function $g_T$ is the reciprocal of the Jacobian. Using $g_T$ we can express $P_T$ as follows
\be{\label {FPg} P_T f(x)=\sum_{y\in T^{-1}(x)}g_T(y)\cdot f(y)\ ,\ x\in X.}
Equality (\ref{FPg}) holds $m$ almost everywhere.

$$\begin{CD}
Y @> {\phantom{...} S \phantom{...} }>>Y\\
@VV \pi V             @VV \pi V\\
X @> {\phantom{...} T \phantom{...}}>> X
\end {CD}$$

\bigskip 

\bigskip

Now, we consider the case when $T$ is a factor of another mapping $S:Y\to Y$, where $(Y,\Sigma_Y,\nu)$ is a Lebesgue space. We assume that $S$ is nonsingular.
By $\xi$ we denote the measurable partition of $Y$ which is $S$-invariant, i.e., $S^{-1}\xi\le \xi$. Let $X=Y/\xi$ and let $T=S_\xi$ be the factor map.
We assume that $m=\pi_*(\nu)$ or $m=\nu\circ \pi$.
We denote the natural projection by $\pi: Y\to X$. Let $C(x)$ denote the element $\pi^{-1}(x)\in \xi$. We have $S(C(x))\subset C(Tx)$.
We will find the relation between $P_T$ and $P_S$.

$$\begin{CD}
L^1(Y,\Sigma_Y,\nu) @> {\phantom{...} P_S \phantom{...} }>>L^1(Y,\Sigma_Y,\nu)\\
@VV E_\nu(\cdot|\xi) V             @VV E_\nu(\cdot|\xi) V\\
L^1(X,\Sigma,m) @> {\phantom{...} P_T \phantom{...}}>> L^1(X,\Sigma,m)
\end {CD}$$

\bigskip 

\bigskip

\begin{proposition}\label{conditional} (Rychlik \cite{Ry}, Proposition 1) Let $E_\nu(\cdot|\xi):L^1(Y,\Sigma_Y,\nu)\to L^1(X,\Sigma,m)$ be the operator of conditional expectation with respect to the
$\sigma$-algebra generated by the partition $\xi$, see \cite{Bil}. For any $f\in L^1(Y,\Sigma_Y,\nu)$ we have
$$P_T(E_\nu(f|\xi))=E_\nu(P_Sf|\xi). $$
\end{proposition}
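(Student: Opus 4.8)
The plan is to prove the operator identity by testing both sides, which live in $L^1(X,\Sigma,m)$, against an arbitrary bounded function $h\in L^\infty(X,\Sigma,m)$ and showing the two pairings agree. The only structural inputs I need are the three adjoint (variational) characterizations already recorded in the excerpt --- those of $P_T$, of $P_S$, and of the conditional expectation --- together with the commutativity of the displayed square, i.e.\ $\pi\circ S=T\circ\pi$, which is exactly the content of the relation $S(C(x))\subset C(Tx)$.

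First I would record the precise form of the conditional-expectation identity suited to this factor situation. Since $m=\pi_*\nu$, for every $h\in L^\infty(X)$ the pullback $h\circ\pi$ is a bounded $\xi$-measurable function on $Y$, and the defining property of $E_\nu(\cdot|\xi)$ together with the change-of-variables formula for $\pi_*$ gives
$$\int_X h\cdot E_\nu(f|\xi)\,dm=\int_Y (h\circ\pi)\cdot f\,d\nu,\qquad f\in L^1(Y),\ h\in L^\infty(X).$$
This is the bridge that moves integrals between $X$ and $Y$.

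The core is then a short chase. Starting from $\int_X h\cdot P_T\bigl(E_\nu(f|\xi)\bigr)\,dm$ and applying the adjoint relation for $P_T$ with test function $h$ turns it into $\int_X (h\circ T)\cdot E_\nu(f|\xi)\,dm$. Applying the bridge identity above with $h$ replaced by the bounded function $h\circ T$ rewrites this as $\int_Y (h\circ T\circ\pi)\cdot f\,d\nu$. Now I invoke commutativity $T\circ\pi=\pi\circ S$ to see $h\circ T\circ\pi=(h\circ\pi)\circ S$, so the integral becomes $\int_Y \bigl((h\circ\pi)\circ S\bigr)\cdot f\,d\nu$; the adjoint relation for $P_S$ with the bounded test function $h\circ\pi$ converts it to $\int_Y (h\circ\pi)\cdot P_Sf\,d\nu$. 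A final application of the bridge identity (now to $P_Sf$ in place of $f$) returns $\int_X h\cdot E_\nu(P_Sf|\xi)\,dm$. Since $h\in L^\infty(X)$ was arbitrary, the two $L^1(X)$ functions coincide $m$-a.e., which is the claim.

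I do not expect a serious obstacle: the argument is purely formal once the adjoint characterizations are in hand. The step requiring the most care is the bridge identity --- specifically, justifying the identification of $E_\nu(\cdot|\xi)$ as an operator into $L^1(X,m)$ (rather than into $\xi$-measurable functions on $Y$) and checking that $h\circ\pi$ and $h\circ T\circ\pi$ are legitimate, i.e.\ measurable and essentially bounded, test functions. The single genuinely essential ingredient, without which the chain breaks, is the intertwining $\pi\circ S=T\circ\pi$ coming from $S(C(x))\subset C(Tx)$; everything else is bookkeeping through the duality relations.
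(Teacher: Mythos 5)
Your argument is correct and is essentially identical to the paper's proof: both test against an arbitrary $h\in L^\infty(X,\Sigma,m)$, pass between $X$ and $Y$ via the pullback/conditional-expectation identity (your ``bridge,'' which the paper derives from the module property $E_\nu(gf|\xi)=gE_\nu(f|\xi)$ and the total-integral property), and pivot on $T\circ\pi=\pi\circ S$ together with the adjoint characterizations of $P_T$ and $P_S$. No substantive difference.
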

\begin{proof} Let $h\in L^\infty(X,\Sigma,m)$. Then:
\begin{equation}\begin{split} &\int_X h\cdot P_T(E_\nu(f|\xi)) dm = \int_X (h\circ T)E_\nu(f|\xi) dm  \\
&=\int_Y (h\circ T\circ \pi)(E_\nu(f|\xi)\circ \pi) d\nu=\int_Y (h\circ  \pi\circ S)f d\nu \\
&=\int_Y (h\circ  \pi) \cdot P_Sf d\nu=\int_Y (h\circ  \pi)(E_\nu(  P_Sf|\xi)\circ \pi) d\nu \\
&=\int_X h\cdot E_\nu(  P_Sf|\xi) dm . 
\end{split}
\end{equation}
We used two properties of the conditional expectation:

(a) If $g\in L^\infty(Y,\Sigma_Y,\nu)$ is $\xi$-measurable, then $E_\nu(gf|\xi)=g E_\nu(f|\xi)$.

(b) $\int_Y  E_\nu(f|\xi) d\nu = \int_Y f d\nu $.
\end{proof}

We assume that $S$ has a regular partition ${\mathcal P}$ with the property 
\be{\label{reg part} S^{-1}\xi\vee {\mathcal P}=\xi.}
 We will consider 
$\xi=\mathcal P^{-}=\bigvee_{k=0}^\infty S^{-k}\mathcal P$ so this property will holds automatically.

\begin{lemma} (Rychlik \cite{Ry}, Lemma 1) The family $\beta=\{\pi(A)\}_{A\in{\mathcal P}}$ is a $T$-regular partition of $X$.
\end{lemma}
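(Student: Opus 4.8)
The plan is to verify directly the three requirements in the definition of a $T$-regular partition: that $\beta=\{\pi(A)\}_{A\in\mathcal P}$ is a measurable countable partition of $X$, that $T(\pi(A))$ is $\Sigma$-measurable for each $A$, and that $T_{|\pi(A)}$ maps $\pi(A)$ isomorphically onto its image. Throughout I would exploit the two structural facts built into the hypotheses. First, since $\xi=\bigvee_{k\ge 0}S^{-k}\mathcal P\ge \mathcal P$, every $A\in\mathcal P$ is a union of atoms of $\xi$, i.e. $\pi^{-1}(\pi(A))=A$. Second, the relation $S^{-1}\xi\vee\mathcal P=\xi$ of \eqref{reg part} says that two points lie in the same atom of $\xi$ precisely when they lie in the same atom of $\mathcal P$ and their images under $S$ lie in the same atom of $\xi$.

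First I would establish that $\beta$ is a measurable countable partition. Countability is inherited from $\mathcal P$, and the sets $\pi(A)$ cover $X$ because $\pi$ is onto. For disjointness, if $x\in\pi(A)\cap\pi(A')$ then $x=\pi(y)=\pi(y')$ with $y\in A$, $y'\in A'$; since $\pi(y)=\pi(y')$ forces $y,y'$ into the same atom of $\xi$ and $\xi\ge\mathcal P$, the points $y,y'$ lie in the same atom of $\mathcal P$, so $A=A'$. Measurability of each $\pi(A)$ in $X$ then follows from $\pi^{-1}(\pi(A))=A\in\Sigma_Y$ together with $m=\pi_*\nu$.

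The heart of the proof is the isomorphism statement, and here the main obstacle is that $\pi$ collapses whole atoms of $\xi$ inside each $A$, so the isomorphism $S_{|A}\colon A\to S(A)$ coming from regularity of $\mathcal P$ cannot simply be pushed down through $\pi$; one must show that this collapsing is compatible with $S$. Using the commuting diagram $T\circ\pi=\pi\circ S$ I would first record $T(\pi(A))=\pi(S(A))$. The crucial point is injectivity of $T_{|\pi(A)}$: if $x_i=\pi(y_i)$ with $y_i\in A$ and $Tx_1=Tx_2$, then $\pi(Sy_1)=\pi(Sy_2)$, so $Sy_1,Sy_2$ lie in the same atom of $\xi$, i.e. $y_1,y_2$ lie in the same atom of $S^{-1}\xi$; since they already lie in the common atom $A$ of $\mathcal P$, the relation \eqref{reg part} puts them in the same atom of $\xi$, whence $x_1=x_2$. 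This is exactly the step where the hypothesis \eqref{reg part} is used, and it is what replaces the (unavailable) global invertibility of $S$.

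Finally I would assemble the isomorphism. The inverse of $T_{|\pi(A)}$ is defined by sending $\pi(z)$, with $z\in S(A)$, to $\pi\big((S_{|A})^{-1}z\big)$; well-definedness is the same computation as above, since if $\pi(z)=\pi(z')$ then the $S_{|A}$-preimages lie in the common atom $A$ of $\mathcal P$ and, after applying $S$, in the same atom of $\xi$, hence by \eqref{reg part} in the same atom of $\xi$. Measurability of $T(\pi(A))=\pi(S(A))$ and of this inverse I would deduce from standard Lebesgue-space facts: $S(A)$ is $\xi$-saturated (equivalently, being the injective image of a measurable set under a map between standard spaces, $T(\pi(A))$ is measurable and $T_{|\pi(A)}$ is a Borel isomorphism onto it), while $(S_{|A})^{-1}$ and $\pi$ are measurable, so the induced inverse is measurable. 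Combining injectivity, surjectivity onto $T(\pi(A))=\pi(S(A))$, and measurability of both directions shows that $T_{|\pi(A)}$ is an isomorphism, completing the verification that $\beta$ is a $T$-regular partition.
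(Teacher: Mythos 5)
Your proposal is correct and follows essentially the same route as the paper's proof: you use $\mathcal P\le\xi$ to get $\pi^{-1}(\pi A)=A$ and hence the partition property, the commuting relation $T\circ\pi=\pi\circ S$ to identify $T(\pi A)$ with $\pi(S(A))$ for measurability, and the relation $S^{-1}\xi\vee\mathcal P=\xi$ to obtain injectivity of $T_{|\pi(A)}$ and well-definedness of its inverse as the factor of $(S_{|A})^{-1}$. The paper states these steps more tersely; your write-up merely fills in the details of the same argument.
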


\begin{proof} By assumption (\ref{reg part}) ${\mathcal P}\le \xi$ and thus $\pi^{-1}(\pi A)=A$ for every $A\in{\mathcal P}$.
Hence, $\beta$ is a partition. Also, $\pi^{-1}(T(\pi A))=S(A)$. Then, $T(\pi A)$ is measurable, by the definition of the factor space and regularity of $\mathcal P$.
Then, $T_{|\pi(A)}:\pi(A)\to T(\pi(A))$ is the factor $S_{|A}:A\to S(A)$. Moreover, by (\ref{reg part}) $T_{|\pi(A)}$ is 1-1
 (since $S_{|A}$ is almost everywhere 1-1) and  $(T_{|\pi(A)})^{-1}$ is  the factor of 
$(S_{|A})^{-1}$. So, $T_{|\pi(A)}$ is an isomorphism of $(\pi(A), \Sigma_{|\pi(A)})$ and $(T(\pi(A), \Sigma_{|T(\pi(A)})$.
\end{proof}

Let $\{\nu_C\}_{C\in\xi}$ be the family of conditional measures of $\nu$ with respect to $\xi$. In the following proposition we relate $g_S$, $g_T$ and $\{\nu_C\}_{C\in\xi}$.

\begin{proposition}\label{prop:Ry2} (Rychlik \cite{Ry}, Proposition 2) For almost every $x\in X$ and  $\nu_{C(x)}$-almost every $y\in C(x)$, we have
\be{\label{gTgS}g_T(x)=g_S(y)\frac{d((S_{|A})^{-1}_* \nu_{C(Tx)})}{d\nu_{C(x)} }(y) ,}
where $A$ is the element of $\mathcal P$ which contains $C(x)$. Note, that $C(x)=S^{-1}(C(Tx))\cap A$. In particular, $(S_{|A})^{-1}_* \nu_{C(Tx)}$ is equivalent to $\nu_{C(x)}$
for almost every $x\in X$.
\end{proposition}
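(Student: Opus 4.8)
The plan is to read off the identity from the Rokhlin disintegration $\nu=\int_X \nu_{C(x)}\,dm(x)$ by computing the pushforward $S_*(\chi_A\nu)$ in two different ways and matching the resulting disintegrations over $X$. Throughout fix $A\in\mathcal P$ and set $B=\pi(A)$. By (\ref{reg part}) we have $\mathcal P\le\xi$, hence $\pi^{-1}(B)=A$, so every fiber $C(x)$ with $x\in B$ lies in $A$ and $\nu_{C(x)}$ is a probability measure carried by $C(x)$. A first remark is that $\pi_*(\chi_A\nu)=\chi_B\,m$, since $\nu_{C(x)}(A)=1$ for $x\in B$ and $=0$ otherwise. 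By the note following the statement, $C(x)=A\cap S^{-1}(C(Tx))$, and since $S_{|A}$ is injective this gives the fiber-equivariance $S(C(x))=S(A)\cap C(Tx)$, with $S_{|A}\colon C(x)\to S(A)\cap C(Tx)$ a bijection.

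First I would compute $S_*(\chi_A\nu)$ from the disintegration. Because $\nu_{C(x)}$ lives on $C(x)\subset A$,
\[
S_*(\chi_A\nu)=\int_B (S_{|A})_*\nu_{C(x)}\,dm(x),
\]
where each $(S_{|A})_*\nu_{C(x)}$ is a probability measure supported on the fiber $C(Tx)$. Changing the base variable to $z=Tx$ along $T_{|B}$, whose inverse-Jacobian density with respect to $m$ is exactly $g_T\circ(T_{|B})^{-1}$ by the definition (\ref{def g_T}) of $g_T$, turns this into a genuine disintegration of $S_*(\chi_A\nu)$ over $X$: the conditional measure on the fiber $C(z)$ is $(S_{|A})_*\nu_{C(x)}$ with $x=(T_{|B})^{-1}z$, and the density of the base measure $\pi_*S_*(\chi_A\nu)=T_*(\chi_B m)$ with respect to $m$ at $z$ equals $g_T(x)$.

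Second I would compute the same measure from the Jacobian of $S$. By the definition of $g_S$, the measure $S_*(\chi_A\nu)$ has $\nu$-density $g_S\circ(S_{|A})^{-1}$ on $S(A)$, so disintegrating $g_S\circ(S_{|A})^{-1}\cdot\nu=\int_X g_S\circ(S_{|A})^{-1}\cdot\nu_{C(z)}\,dm(z)$ shows that its conditional measure on $C(z)$ is the normalization of $g_S\circ(S_{|A})^{-1}\cdot\nu_{C(z)}$, with base density $\int_{C(z)}g_S\circ(S_{|A})^{-1}\,d\nu_{C(z)}$ at $z$. Invoking the essential uniqueness of disintegrations on the Lebesgue space $(Y,\Sigma_Y,\nu)$, the two base densities agree, giving $g_T(x)=\int_{C(Tx)}g_S\circ(S_{|A})^{-1}\,d\nu_{C(Tx)}$, and the two conditionals agree, giving
\[
(S_{|A})_*\nu_{C(x)}=\frac{1}{g_T(x)}\,g_S\circ(S_{|A})^{-1}\cdot\nu_{C(Tx)}.
\]
Pulling back by $(S_{|A})^{-1}$ and using $(S_{|A})^{-1}_*\big(g_S\circ(S_{|A})^{-1}\cdot\nu_{C(Tx)}\big)=g_S\cdot(S_{|A})^{-1}_*\nu_{C(Tx)}$ yields $g_T(x)\,\nu_{C(x)}=g_S\cdot(S_{|A})^{-1}_*\nu_{C(Tx)}$ as measures on $C(x)$, which is precisely (\ref{gTgS}) after taking Radon--Nikodym derivatives with respect to $\nu_{C(x)}$; the equivalence of $(S_{|A})^{-1}_*\nu_{C(Tx)}$ and $\nu_{C(x)}$ follows since $g_T(x)/g_S(y)$ is finite and positive $\nu_{C(x)}$-a.e.

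I expect the main obstacle to be the rigorous identification in the first computation, namely that $(S_{|A})_*\nu_{C(x)}$ really is the conditional measure of $S_*(\chi_A\nu)$ on $C(Tx)$ and that $g_T$ emerges as the correct base density. This rests on the fiber-equivariance $S(C(x))=S(A)\cap C(Tx)$ coming from (\ref{reg part}), on a measure-valued change of variables along $T_{|B}$, and on the uniqueness of the Rokhlin disintegration; one must also read $(S_{|A})^{-1}_*\nu_{C(Tx)}$ as the pushforward of the restriction of $\nu_{C(Tx)}$ to $S(A)$, since $(S_{|A})^{-1}$ is only defined there. An alternative, more computational route is to feed $f=\chi_A\cdot(\phi\circ\pi)$ into Proposition~\ref{conditional} together with the formula (\ref{FPg}) for $P_S$ and $P_T$, and to read off (\ref{gTgS}) by testing against arbitrary $\phi\in L^\infty(X,\Sigma,m)$; this avoids disintegration language but requires the same fiber bookkeeping.
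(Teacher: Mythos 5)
Your argument is correct, but it is organized differently from the paper's. The paper proves (\ref{gTgS}) by duality: it takes an arbitrary $h\in L^\infty(Y,\Sigma_Y,\nu)$, writes $E_\nu(P_Sh|\xi)(x)=\int h\,d\sigma_{1,x}$ with $\sigma_{1,x}=\sum_{z\in T^{-1}(x)}g_S\cdot((S_{|C(z)})^{-1}_*\nu_{C(x)})$ and $P_TE_\nu(h|\xi)(x)=\int h\,d\sigma_{2,x}$ with $\sigma_{2,x}=\sum_{z\in T^{-1}(x)}g_T(z)\,\nu_{C(z)}$, invokes Proposition~\ref{conditional} to conclude $\sigma_{1,x}=\sigma_{2,x}$ for a.e.\ $x$, and then separates the summands using the disjointness of the supports of the $\nu_{C(z)}$ together with the a.e.\ positivity of $g_S$ and $g_T$. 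You instead fix a single branch $A\in\mathcal P$, compute $S_*(\chi_A\nu)$ once via the fiberwise pushforwards $(S_{|A})_*\nu_{C(x)}$ with a change of base variable along $T_{|\pi(A)}$ (which is where $g_T$ enters), and once via the Jacobian density $g_S\circ(S_{|A})^{-1}$ on $S(A)$, and then appeal to the essential uniqueness of the Rokhlin disintegration. The two arguments are equivalent in substance --- your first computation is exactly the measure-valued content of $P_TE_\nu(\cdot|\xi)$ localized to $A$, and your second is that of $E_\nu(P_S\cdot|\xi)$ --- but your branch-by-branch bookkeeping dispenses with the disjoint-supports step, makes explicit that $(S_{|A})^{-1}_*\nu_{C(Tx)}$ must be read as the pushforward of the restriction of $\nu_{C(Tx)}$ to $S(A)$ (a point the paper leaves implicit), and trades the reliance on Proposition~\ref{conditional} for reliance on uniqueness of disintegrations on a Lebesgue space; the paper's route is shorter given that Proposition~\ref{conditional} is already on record, while yours is more self-contained and geometrically transparent. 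The only point worth making fully explicit in your write-up is the positivity $g_T>0$, $g_S>0$ a.e.\ needed for the final equivalence claim, which the paper also uses without further comment.
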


\begin{proof} Let $h\in L^\infty(Y,\Sigma_Y,\nu)$. Then, 
\be{\label{firstgTgS} E_\nu(P_Sh|\xi)(x)=\int_{C(x)} \sum_{A\in\mathcal P} (h\cdot g_S)\circ (S_{|A})^{-1} d\nu_{C(x)}=\int h d \sigma_{1,x},}
where $$\sigma_{1,x}=\sum_{z\in T^{-1}(x)} g_S\cdot ((S_{|C(z)})^{-1}_* \nu_{C(x)}).$$
The first inequality in (\ref{firstgTgS}) follows by the definition of $P_Sh$ and the fact that $E_\nu(P_Sh|\xi)$ is almost surely constant on elements of $\xi$.
The second, by the definition of $g_S$.
Also, we have
\be{\label{secondgTgS} P_TE_\nu(h|\xi)(x)=\sum_{z\in T^{-1}(x)} \int_{C(z)} h d \nu_{C(z)} g_T(z)=\int h d \sigma_{2,x},}
where $$\sigma_{2,x}=\sum_{z\in T^{-1}(x)}  g_T(z) \, \nu_{C(z)}.$$
In view of Proposition \ref{conditional} since $h$ is arbitrary the measures $\sigma_{1,x}$ and $\sigma_{2,x}$ are equal for almost every $x$. Since the measures $\nu_{C(x)}$ have disjoint supports and since functions $g_T$ 
and $g_S$ are positive almost everywhere the equality (\ref{gTgS}) is proved.
\end{proof}

We will consider situation when the elements of $\xi$ are endowed with some natural measure. Let $\{\ell_C\}_{C\in \xi}$ be a family of such measures such that for any $C\in\xi$ the measure $\ell_C$ is equivalent to
$\nu_C$ and the Radon-Nikodym derivative $$\rho=\frac{d\nu_C}{d\ell_C},$$
defined on $Y$ is $\Sigma_Y$-measurable. Then, for almost every $x\in X$, $(S_{|C(x)})^{-1}_*\ell_{C(Tx)}$ is also equivalent to $\ell_{C(x)}$. Also, the function
$$\lambda(y)=\frac {d((S_{|C(x)})^{-1}_*\ell_{C(Tx)})}{d\ell_{C(x)}}(y)\ ,\ y\in C(x) ,$$
is $\Sigma_Y$-measurable.

Let us now consider the situation when $S:[0,1]^2\to [0,1]^2$ preserves two families of cones, the cone of stable directions and the cone of unstable directions. Let $\xi=\mathcal P^{-}$ be the $S$-invariant partition
which consists of pieces of lines with stable directions. On each element $C\in \xi$ we have Lebesgue measure $\ell_C$. If $\nu$ is the Lebesgue measure on $[0,1]^2$, then we proved in Proposition \ref{prop:Ry9} that
for almost all $C$, the conditional measure $\nu_C$ is equivalent to $\ell_C$.

By Proposition \ref {prop:Ry2} we have
$$g_T\circ \pi=g_S\frac {\rho\circ S}{\rho} \lambda. $$
For $y,y'$ belonging to the same $C$ it gives $$\frac{\rho(y)}{\rho(y')}=\frac {g_S(y)}{g_S(y')}\frac{\rho(Sy)}{\rho(Sy')}\frac{\lambda(y)}{\lambda(y')},$$
and by induction
\be{\label{rhon}
\frac{\rho(y)}{\rho(y')}=\left(\prod_{k=0}^{n-1}\frac {g_S(S^ky)}{g_S(S^ky')}\frac{\lambda(S^ky)}{\lambda(S^ky')}\right)\frac{\rho(S^ny)}{\rho(S^ny')}.}
Formula (\ref{rhon}) proves the following:
\begin{proposition}\label{prop:Ry3} (Rychlik's Proposition 3) For almost every $x\in X$ and for $\nu_{C(x)}$-almost every $y,y'\in C(x)$, the following conditions are equivalent:

(a) $$\lim _{n\to\infty}\frac{\rho(S^ny)}{\rho(S^ny')}=1 ;$$

(b) $$\frac{\rho(y)}{\rho(y')}=\prod_{k=0}^{\infty}\frac {g_S(S^ky)}{g_S(S^ky')}\frac{\lambda(S^ky)}{\lambda(S^ky')}.$$
\end{proposition}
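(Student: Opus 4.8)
The plan is to read the equivalence off directly from the already-established identity (\ref{rhon}), treating it as an algebraic relation in which the left-hand side is a constant independent of $n$. First I would fix a point $x$ for which (\ref{rhon}) holds and restrict attention to those $y,y'\in C(x)$ at which the orbit never meets the exceptional set where $\rho$ vanishes or is infinite. By the setup $\rho=d\nu_C/d\ell_C$ is positive and finite $\nu_C$-almost everywhere, and since $S$ is nonsingular the countable union $\bigcup_{n\ge 0}S^{-n}\{\rho=0\text{ or }\rho=\infty\}$ is still null; hence for $\nu_{C(x)}$-almost every $y$ every iterate $S^ny$ avoids this set. The qualifiers ``almost every $x$'' and ``$\nu_{C(x)}$-almost every $y,y'$'' in the statement account exactly for these exceptional sets, and with these choices $\rho(y)/\rho(y')$ and each $\rho(S^ny)/\rho(S^ny')$ are well-defined strictly positive reals.

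Next I would name the two factors appearing in (\ref{rhon}): the partial product
$$P_n=\prod_{k=0}^{n-1}\frac{g_S(S^ky)}{g_S(S^ky')}\frac{\lambda(S^ky)}{\lambda(S^ky')}$$
and the remainder ratio $R_n=\rho(S^ny)/\rho(S^ny')$. Formula (\ref{rhon}) then reads simply as
$$\frac{\rho(y)}{\rho(y')}=P_n\cdot R_n\qquad\text{for every }n,$$
in which the left-hand side does not depend on $n$. Since $g_S$ and $\lambda$ are positive almost everywhere, every $P_n$ is positive, and $R_n>0$ as well, so all quantities in this relation are positive reals.

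The equivalence is now immediate from this constant relation. For (a)$\Rightarrow$(b), if $R_n\to 1$ then dividing the fixed value $\rho(y)/\rho(y')$ by $R_n$ gives $P_n=(\rho(y)/\rho(y'))/R_n\to\rho(y)/\rho(y')$, which is precisely the assertion that the infinite product in (b) converges to $\rho(y)/\rho(y')$. Conversely, for (b)$\Rightarrow$(a), if $P_n\to\rho(y)/\rho(y')$ then, because this limit is positive and finite, I may write $R_n=(\rho(y)/\rho(y'))/P_n$ and pass to the limit to obtain $R_n\to(\rho(y)/\rho(y'))/(\rho(y)/\rho(y'))=1$.

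I expect no genuine analytic obstacle: once (\ref{rhon}) is in hand the argument is purely algebraic, amounting to the observation that when a product of two positive factors equals a fixed positive constant, one factor tends to that constant exactly when the other tends to $1$. The only points requiring care are the bookkeeping ones sketched in the first paragraph, namely verifying that $\rho(y)/\rho(y')$ is positive and finite along the whole orbit, so that the divisions are legitimate and the product-convergence in (b) is genuinely convergence to a nonzero value, and matching the resulting null sets with the ``almost every'' qualifiers in the statement.
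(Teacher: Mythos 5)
Your proposal is correct and follows exactly the paper's route: the paper derives the proposition directly from the identity (\ref{rhon}), stating only ``Formula (\ref{rhon}) proves the following,'' and your argument is precisely the algebraic elaboration of that one-line justification. The bookkeeping you add about positivity and null sets is a reasonable fleshing-out of the ``almost every'' qualifiers, not a departure from the paper's approach.
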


\begin{remark} If (a) holds almost everywhere, then (b) and the condition $\int \rho d\ell_C=1 $ determine $\rho$ completely.
\end{remark}

\section{Existence of Absolutely Continuous Invariant Measures.}\label{ACIM}

As before, let $T$ be a nonsingular map of a Lebesgue space $(X,\Sigma, m)$. Let $\beta$ be a regular partition of $X$ such that $\beta^{-}=\bigvee_{k=0}^\infty T^{-k}(\beta)$ is a partition into points,
i.e., $\beta$ is a generator for $T$. We will give conditions which prove that $T$ admits an invariant measure absolutely continuous with respect to $m$. We introduce the following notations:
$g=g_T$, $g_n=g_{T^n}$, $P=P_T$, for any $A\in \Sigma$, $\beta(A)=\{B\in\beta:m(B\cap A)>0\}$. By supremum and infimum we understand the essential supremum or minimum.

\textbf{(I)\ \phantom{II} Distortion condition:} $$\exists _{\ d>0}\ \forall _{\ n\ge 1} \ \forall _{\ B\in \beta^n} \ \sup_B g_n\le d\cdot \inf_B g_n ;$$

\textbf{(II)\ \phantom{I} Localization condition:} $$\exists _{\eps>0}\ \exists _{\ 0<r<1} \ \forall _{\ n\ge 1} \ \forall _{\ B\in\beta^n}\ m(T^nB)<\eps \Longrightarrow \sum_{B'\in \beta(T^n B)} \sup_{B'} g \le r ;$$

\textbf{(III)\  Bounded variation  condition:} $$\sum_{B\in\beta} \sup_B g <+\infty .$$

\begin{remark} If conditions (I) and (III) hold for $T$, $\beta$ and $g$, then they also hold for $T^N$, $\beta^N$ and $g_N$. Condition (I) holds with the same value of $d$.
Moreover, $$\sum_{B\in\beta^N} \sup_B g_N\le \left(\sum_{B\in\beta} \sup_B g\right)^N .$$
\end{remark}

\begin{theorem}\label{TH1} (Rychlik \cite{Ry}, Theorem 1) Let (I)--(III) be satisfied. Then, the sequence $(P^n \bold 1)_{n\ge 1}$ is bounded in $L^\infty(X,\Sigma, m)$ and the averages
$\frac 1n \sum_{k=0}^{n-1} P^k \bold 1$ converge in $L^1(X,\Sigma, m)$ to some $\phi\in L^\infty(X,\Sigma, m)$ such that $P\phi=\phi$.
\end{theorem}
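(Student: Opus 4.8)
The plan is to follow the standard spectral/functional-analytic route for Lasota--Yorke type operators. The essential content is that conditions (I)--(III) let us prove a uniform bound on $\sup_B g_n$-type quantities, which yields a uniform $L^\infty$ bound on the iterates $P^n\bold 1$, and then a standard averaging argument produces a fixed point.

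\textbf{Uniform $L^\infty$ bound.} First I would write out $P^n\bold 1$ explicitly using the pointwise formula for the Frobenius--Perron operator of $T^n$: for $m$-a.e.\ $x$,
$$P^n\bold 1(x)=\sum_{y\in T^{-n}(x)}g_n(y)=\sum_{B\in\beta^n}\chi_{T^n B}(x)\,g_n(y_B(x)),$$
where $y_B(x)$ is the unique preimage of $x$ lying in $B$ (using that $\beta^n$ is a regular partition, so $T^n$ is injective on each $B\in\beta^n$). Hence $P^n\bold 1(x)\le\sum_{B\in\beta(\{x\})}\sup_B g_n$, and the goal is to bound $\sum_{B\in\beta^n,\ x\in T^nB}\sup_B g_n$ uniformly in $n$ and $x$. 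The distortion condition (I) is what lets us pass between $\sup_B g_n$ and an integrated quantity: for each $B\in\beta^n$ one has $\sup_B g_n\le d\,\inf_B g_n\le d\,m(T^nB)^{-1}\int_{T^nB}P^n\bold 1\,dm$ is not quite what I want; more directly, (I) gives $\sup_B g_n\le \frac{d}{m(T^nB)}\,m(B)$ since $\int_B g_n\,dm=m(T^nB)$ and $\inf_B g_n\le m(B)^{-1}\int_B g_n\,dm$. The point is that (I) converts pointwise suprema into measures of image sets.

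\textbf{Contraction from the localization condition.} The heart of the argument is a decomposition of the sum $\sum_{B}\sup_B g_n$ into blocks coming from the last application of $T$: writing $T^n=T\circ T^{n-1}$ and refining, each element of $\beta^n$ maps forward in $n-1$ steps and then one more step. Condition (II) says that whenever an image cell $T^kB$ is small (measure $<\eps$), the sum of $\sup g$ over the $\beta$-cells meeting it is bounded by $r<1$, giving genuine contraction; the bounded-variation condition (III) controls the cells whose images are \emph{not} small, of which there can be only finitely many since their images each have measure $\ge\eps$ and $m$ is a probability measure. Combining, I expect to obtain an estimate of the form
$$\Big\|\,\textstyle\sum_{B\in\beta^n,\ x\in T^nB}\sup_B g_n\,\Big\|_\infty\le r^{\lfloor n/N\rfloor}\cdot(\text{const})+\frac{C}{1-r},$$
i.e.\ a uniform bound $\|P^n\bold 1\|_\infty\le K$ for all $n$, where the geometric factor comes from iterating the localization contraction and the additive constant collects the finitely many large-image cells at each step. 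The Remark preceding the theorem, which propagates (I) and (III) from $(T,\beta,g)$ to $(T^N,\beta^N,g_N)$, is exactly what allows this block-by-block iteration with a fixed contraction ratio.

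\textbf{Extracting the fixed point.} Once $\{P^n\bold 1\}$ is bounded in $L^\infty$, it is bounded in $L^1$ as well, so the Cesàro averages $\phi_n=\frac1n\sum_{k=0}^{n-1}P^k\bold 1$ are a bounded sequence in $L^\infty$ with $\int\phi_n\,dm=1$. By weak-$*$ compactness of the unit ball of $L^\infty=(L^1)^*$, a subsequence converges weak-$*$ to some $\phi\in L^\infty$ with $P\phi=\phi$ (the fixed-point relation follows from $P\phi_n-\phi_n=\frac1n(P^n\bold 1-\bold 1)\to 0$ and continuity of $P$ on $L^1$). Upgrading weak-$*$ convergence of the averages to genuine $L^1$ convergence is the one extra step: this follows from a uniform-integrability/Komlós-type argument, or more cleanly from the fact that the uniform $L^\infty$ bound together with the Lasota--Yorke structure gives quasi-compactness of $P$, so that $\frac1n\sum P^k\bold 1$ converges in $L^1$ to the projection of $\bold 1$ onto the fixed-point space. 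I expect the main obstacle to be the contraction estimate in the second step: carefully organizing the sum over $\beta^n$ so that the localization condition (II) can be applied iteratively, and verifying that the cells with large images contribute only a bounded (non-growing) additive term, is the delicate combinatorial bookkeeping on which the whole uniform bound rests.
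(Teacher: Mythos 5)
The paper does not actually prove Theorem~\ref{TH1}: its ``proof'' is a pointer to Rychlik's original paper and to the book of Boyarsky and G\'ora. Your outline is essentially the standard argument from those sources --- the pointwise formula $P^n\mathbf 1(x)=\sum_{B\in\beta^n,\,x\in T^nB}\sup\nolimits_B g_n$ (up to distortion), the dichotomy between cells with small and large images driven by (II) and (III), and a mean-ergodic extraction of the fixed point --- so the overall architecture is right and matches what the cited references do.

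Two steps need repair. First, your justification for the large-image cells is wrong as stated: the images $T^nB$ for distinct $B\in\beta^n$ need not be disjoint, so ``each has measure $\ge\eps$ and $m$ is a probability measure'' does not bound their number. The correct argument is the one you already half-wrote: $\int_{T^nB}P_{T^n}\chi_B\,dm=m(B)$ gives $\inf_B g_n\le m(B)/m(T^nB)$, hence by (I) $\sup_B g_n\le d\,m(B)/\eps$ for every large-image cell, and summing over the partition yields the bound $d/\eps$ regardless of how many such cells there are. Second, the closing step should not appeal to quasi-compactness: conditions (I)--(III) alone do not set up a Lasota--Yorke inequality in a stronger norm, so no spectral gap is available here (that is what (IV) and Theorem~\ref{TH2} are for). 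The clean route is: the uniform $L^\infty$ bound makes $\{\frac1n\sum_{k<n}P^k\mathbf 1\}$ uniformly integrable, hence weakly sequentially compact in $L^1$ (Dunford--Pettis), and since $P$ is a positive $L^1$-contraction the Yosida--Kakutani mean ergodic theorem upgrades the existence of a weak limit point of the averages to norm convergence of the whole averaged sequence to a $P$-fixed $\phi$. With those two corrections your proof goes through.
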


\begin{proof} For the proof we refer to  \cite{Ry} or to \cite{BG}.
\end{proof}

Theorem \ref{TH1} gives the existence of an absolutely continuous invariant measure. To improve on this result we introduce the following condition:

\textbf{(IV)\  Expanding  condition:} $$\exists _{\ r\in (0,1)}\ \sup_X g \le r .$$

We can  assume that $r$ is chosen to satisfy both (II) and (IV).

\begin{theorem}\label{TH2} (Rychlik \cite{Ry}, Theorem 2) Let (I)--(IV) be satisfied. Then, there exists a bounded, finite dimensional projection $Q: L^1(X,\Sigma, m)\to L^\infty(X,\Sigma, m)$ such that

(1) \ $Q(L^1(X,\Sigma, m))\subset L^\infty(X,\Sigma, m)$ and $Q$ is bounded as an operator from $L^1(X,\Sigma, m)$ to $L^\infty(X,\Sigma, m)$;

(2) for every $f\in L^1(X,\Sigma, m)$ the averages
$\frac 1n \sum_{k=0}^{n-1} P^k f$ converge in $L^1(X,\Sigma, m)$ to $Qf$.

(3) The range  $\mathcal R(Q)$ of $Q$ consists of all eigenvectors of $P$ corresponding to the eigenvalue 1 and of 0 function.

(4) There exist non-negative functions $\phi_1,\phi_2,\dots,\phi_s\in \mathcal R(Q)$ which span $\mathcal R(Q)$ and $\phi_i\wedge\phi_j=0$ as $i\not= j$. Moreover, $\int_X\phi_i dm=1$, $i=1,\dots,s$ and if
$$C_i=\bigcup_{n=0}^\infty T^{-n}\{x: \phi_i(x)>0\} $$
is the basin of attraction of the measure $\phi_i m$, then $Q$ can be represented as
$$Qf=\sum_{i=1}^s \int_{C_i} f \, dm \cdot \phi_i . $$
Moreover, $\bigcup_{i=1}^s C_i=X$ up to a set of measure 0 and $\{\phi_i\}_{i=1}^s$ are the only functions $\phi\in L^1(X,\Sigma, m)$ such that $\phi\cdot m$ is a $T$-invariant, ergodic, probabilistic measure.
\end{theorem}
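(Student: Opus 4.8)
The plan is to upgrade \thmref{TH1} using the expanding condition (IV), which supplies the strict contraction that was absent before. First I would fix a variation seminorm $\mathrm{var}(\cdot)$ adapted to the partition $\beta$ (the same one underlying \thmref{TH1}) and derive a Lasota--Yorke inequality of the form $\mathrm{var}(Pf)\le r\,\mathrm{var}(f)+K\|f\|_1$, in which the contraction factor is exactly the constant $r<1$ furnished by (IV): the new jumps that $P$ creates along the images $T(B)$, $B\in\beta$, are weighted by $g$, and $\sup_X g\le r$ controls them, while (I) and (III) bound the remaining terms by $K\|f\|_1$. Iterating yields $\mathrm{var}(P^nf)\le r^n\,\mathrm{var}(f)+\frac{K}{1-r}\|f\|_1$, so $\{P^n\}$ is power bounded in the norm $\mathrm{var}(\cdot)+\|\cdot\|_1$, and in particular every fixed density $f=Pf$ satisfies $(1-r)\,\mathrm{var}(f)\le K\|f\|_1$.

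From this bound I would deduce that the fixed space $V=\{f:Pf=f\}$ is finite dimensional: sets bounded in the variation norm are relatively compact in $L^1$ (Helly-type compactness), and the previous estimate shows the $L^1$-unit ball of $V$ lies in such a set, so by the Riesz criterion $\dim V<\infty$. This is the one point where (IV) is essential, since (I)--(III) alone gave only the boundedness in \thmref{TH1}, whereas the strict factor $r<1$ is what forces finite dimensionality. To build $Q$, note that $\|P\|_{L^1}\le 1$ makes the averages $A_n=\frac1n\sum_{k=0}^{n-1}P^k$ uniform contractions on $L^1$, so it suffices to prove convergence on the dense set of variation-bounded functions; for such $f$ the orbit $\{A_nf\}$ is variation-bounded, hence $L^1$-relatively compact, and every limit point is $P$-fixed because $PA_nf-A_nf=\frac1n(P^nf-f)\to 0$. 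The mean ergodic theorem then gives genuine $L^1$-convergence $A_nf\to Qf$, proving (2); $Q$ is the projection onto $V$, which is (3); and since all norms on the finite dimensional space $V\subset L^\infty$ are equivalent while $\|Qf\|_1\le\|f\|_1$, we obtain $\|Qf\|_\infty\le C\|Qf\|_1\le C\|f\|_1$, which is (1).

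The remaining step is the ergodic decomposition (4). Inside the finite dimensional convex set $\mathcal D=\{f\in V:f\ge 0,\ \int f\,dm=1\}$ of invariant densities I would take its extreme points $\phi_1,\dots,\phi_s$, finitely many since $\dim V<\infty$; these are precisely the ergodic invariant densities. Disjointness of supports follows from positivity: for $i\ne j$ the function $h=\phi_i\wedge\phi_j$ obeys $Ph\le P\phi_i\wedge P\phi_j=\phi_i\wedge\phi_j=h$ with $\int Ph\,dm=\int h\,dm$, forcing $Ph=h$; were $h\ne 0$ it would be a nontrivial invariant density dominated by the ergodic $\phi_i$, contradicting extremality, so $\phi_i\wedge\phi_j=0$. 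To establish the formula $Qf=\sum_i\int_{C_i}f\,dm\cdot\phi_i$ together with $\bigcup_i C_i=X$ mod $0$, I would test against $h\in L^\infty$ and transfer the Cesàro averages to the Koopman side, where the Birkhoff ergodic theorem on each ergodic component identifies $\lim_n\frac1n\sum_k (h\circ T^k)$ with the space average over the basin $C_i$.

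I expect this last step to be the main obstacle. The functional-analytic core --- the Lasota--Yorke estimate, the Riesz finite-dimensionality argument, and the mean ergodic theorem --- is essentially routine once (IV) is in hand, but matching the abstract spectral projection $Q$ with the measure-theoretic ergodic basins, and in particular proving that no invariant mass escapes to $X\setminus\bigcup_i C_i$, requires coupling the positivity of $P$ with the Birkhoff theorem and is where the argument is genuinely substantive.
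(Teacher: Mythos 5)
The paper gives no proof of \thmref{TH2} at all---it simply defers to Rychlik \cite{Ry}---so the only meaningful comparison is between your sketch and Rychlik's actual argument, and there the two diverge in a way that exposes a genuine gap in your plan. Your entire functional-analytic core rests on a ``variation seminorm adapted to $\beta$'' for which (i) a Lasota--Yorke inequality holds and (ii) variation-bounded sets are relatively compact in $L^1$ (Helly). But $X$ here is the abstract quotient $[0,1]^2/\mathcal P^{-}$, a Lebesgue space with no interval structure; there is no canonical notion of variation on it, and condition (III) is not a bound on the variation of $g$ (it is just $\sum_B\sup_B g<\infty$, which holds trivially because $\beta$ is finite). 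So both the inequality $\mathrm{var}(Pf)\le r\,\mathrm{var}(f)+K\|f\|_1$ and the Helly compactness you invoke are unsupported: you would first have to construct such a seminorm from (I)--(IV), and nothing in your sketch does this. Rychlik's route avoids this entirely: he works with $L^\infty$ bounds on $P^n\mathbf 1$ obtained from the distortion condition (I) and the localization condition (II), and finite dimensionality of the fixed space comes not from a Riesz compactness criterion but from the fact that (II) forces every ergodic invariant density to have support of measure bounded below by $\eps$, so there can be only finitely many mutually singular ones. In your sketch condition (II) plays no role at all until the very end, which is a strong signal that the argument as written cannot be the right one---(II) is precisely the hypothesis that separates this theorem from a generic mean ergodic theorem.

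The second gap is the one you yourself flag: the identification $\bigcup_{i=1}^s C_i=X$ mod $0$ and the formula $Qf=\sum_i\int_{C_i}f\,dm\cdot\phi_i$. Appealing to Birkhoff on each ergodic component only controls points already in the supports of the $\phi_i$; the substantive claim is that $m$-almost every point of $X$ is eventually attracted to one of these components, i.e.\ that no invariant mass and no orbit average escapes to the complement. This again is where (II) does the work (it prevents the Ces\`aro averages $\frac1n\sum P^k f$ from leaking onto sets whose images stay small), and your proposal contains no mechanism for it. The positivity/extremality argument for $\phi_i\wedge\phi_j=0$ is fine, and the mean-ergodic-theorem skeleton for (2)--(3) would be routine \emph{given} a norm in which $P$ is quasi-compact, but as it stands the two load-bearing steps---finite dimensionality and the exhaustion of $X$ by the basins---are not established.
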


\begin{proof} For the proof we  refer to  \cite{Ry}.
\end{proof}

\section{ Preliminary Results for Maps $G_\alpha$ when $\alpha\in(\frac{3}{4}, 1)$.}\label{sec:Prelim}

Recall the map $G=G_\alpha$:
\begin{equation*}
G(x,y)=[y,\tau (\alpha y+(1-\alpha ) x)]\, \ , \ (x,y)\in [0,1]^2 ,
\end{equation*}
where $\tau$ is the tent map $x\mapsto 1-2|x-1/2|$.  $L$ denotes the line $\alpha y+(1-\alpha ) x=1/2$
which divides $[0,1]^2$ into two domains on which $G$ is 1--1. A more explicit formula for $G$ is
\begin{equation*}
G(x,y)=\begin{cases}(y, 2 (\alpha y+(1-\alpha ) x)&, \text{ if } y \text{ is below } L ;\\
(y, 2-2 (\alpha y+(1-\alpha ) x)&, \text{ if } y \text{ is above } L .
\end{cases}
\end{equation*}
 We have two possibilities for the  Jacobian matrices:
\[A_{\pm}=DG=\left[ \begin {array}{cc} 0&1\\ \noalign{\medskip}\left( 1-\alpha \right) \tau'\left(\alpha  y+(1-\alpha ) x\right)
 & \alpha\tau'\left(\alpha y+(1-\alpha  x\right)
\end {array} \right]=\left[ \begin {array}{cc} 0&1\\ \noalign{\medskip}\pm 2\left( 1-\alpha \right)
 & \pm 2\alpha
\end {array} \right],
\]
with $+$ sign for $(x,y)\in A_1$, the region below line $L$ and $-$ sign for $(x,y)\in A_2$, the region above line $L$.     
Similarly, when we consider the inverse branches $G_1^{-1}$ and $G_2^{-1}$ , 
 we have two Jacobian matrices:
\[B_{\pm}=DG^{-1}=\left[ \begin {array}{cc} \frac{-\alpha}{1-\alpha}&\pm \frac{1}{2(1-\alpha)}\\ \noalign{\medskip}1
 & 0
\end {array} \right].
\]

We now construct  invariant cones of directions in the tangent spaces  as in \cite{Ry}.

For $A_{\pm}$, we consider the direction vector in the form $(u,1)$. Then,
\begin{equation*}\begin{split}\left[ \begin {array}{cc} 0&1\\ \noalign{\medskip}\pm 2\left( 1-\alpha \right)
 & \pm2\alpha
\end {array} \right] \left[ \begin {array}{c} u\\ \noalign{\medskip}1
\end {array}\right]
&= \left[ \begin {array}{c} 1\\ \noalign{\medskip}\pm 2u\left( 1-\alpha \right)\pm 2\alpha
\end {array}\right]\\
&=(\pm 2u\left( 1-\alpha \right)\pm 2\alpha)\left[ \begin {array}{c} \frac{1}{\pm 2u\left( 1-\alpha \right)\pm 2\alpha}\\ \noalign{\medskip}1
\end {array}\right].
\end{split}
\end{equation*}
Let 
\be{S_\pm(u)=\frac{1}{\pm 2u\left( 1-\alpha \right)\pm 2\alpha},}
 be the corresponding transformation on directions.

For $B_{\pm}$, we consider the direction vector in the form $(1,v)$. Then,
\begin{equation*}\begin{split}\left[ \begin {array}{cc} \frac{-\alpha}{1-\alpha}&\pm \frac{1}{2(1-\alpha)}\\ \noalign{\medskip}1
 & 0
\end {array} \right] \left[ \begin {array}{c} 1\\ \noalign{\medskip}v
\end {array}\right]
&= \left[ \begin {array}{c} \frac{-\alpha}{1-\alpha}\pm \frac{v}{2(1-\alpha)}\\ \noalign{\medskip}1
\end {array}\right]\\
&=(\frac{-\alpha}{1-\alpha}\pm \frac{v}{2(1-\alpha)})\left[ \begin {array}{c} 1\\ \noalign{\medskip}\frac{1}{\frac{-\alpha}{1-\alpha}\pm \frac{v}{2(1-\alpha)}}
\end {array}\right].
\end{split}
\end{equation*}
Let 
\be{T_\pm(v)=\frac{1}{\frac{-\alpha}{1-\alpha}\pm \frac{v}{2(1-\alpha)}}=\frac {2(1-\alpha)}{-2\alpha\pm v},}
 be the corresponding transformation on directions.

\begin{figure}[tbp] 
  \centering
  \includegraphics[bb=0 -1 655 378,width=3.92in,height=2.27in,keepaspectratio]{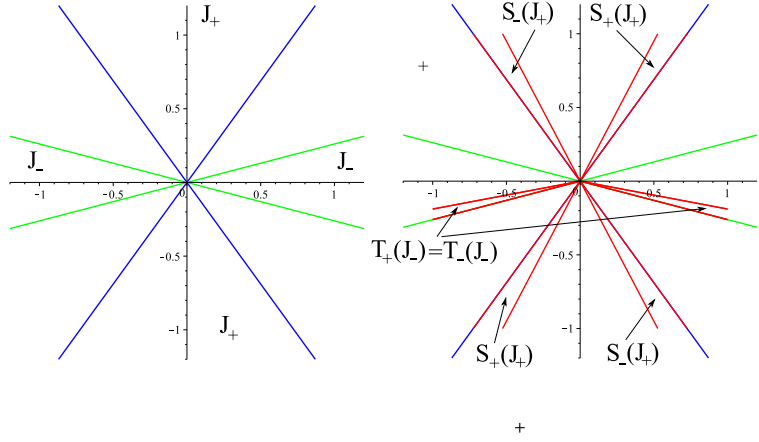}
  \caption{Invariant cones $J_+$ and $J_-$ and their images for $\alpha=0.82$.}
  \label{fig:invariant_cones}
\end{figure}

\begin{lemma}\label{inv_cones} (Rychlik \cite{Ry}, Lemma 3)\ 
Let $\theta_0=\alpha-\sqrt{\alpha^2+2\alpha-2}$, $J_+=\left\{u\in \mathbb{R}\big| |2u\left( 1-\alpha \right)|\leq\theta_0\right\}$, $J_-=\left\{v\in \mathbb{R}\big| |v|\leq\theta_0\right\}$. Then, $J_+$
and $J_-$ are $S_\pm-$ and $T_\pm-$invariant, respectively.
\end{lemma}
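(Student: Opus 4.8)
The plan is to reduce both invariance claims to a single algebraic identity satisfied by $\theta_0$. Squaring the definition $\theta_0=\alpha-\sqrt{\alpha^2+2\alpha-2}$ gives $(\alpha-\theta_0)^2=\alpha^2+2\alpha-2$, which rearranges to the quadratic relation
\[ \theta_0^2-2\alpha\theta_0+2(1-\alpha)=0, \qquad \text{equivalently} \qquad \theta_0\,(2\alpha-\theta_0)=2(1-\alpha). \]
First I would record the elementary facts keeping the relevant Möbius maps well defined. For $\alpha\in(3/4,1)$ the radicand $\alpha^2+2\alpha-2$ is positive (its positive root is $\sqrt3-1<3/4$), so $\theta_0$ is real; moreover $\theta_0>0$ exactly because $\alpha<1$ (indeed $\theta_0>0\iff\alpha^2>\alpha^2+2\alpha-2$), and trivially $\theta_0<\alpha<2\alpha$ since we subtract a positive quantity. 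Hence every denominator below stays bounded away from $0$.

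For $J_+$ I would pass to the scaled coordinate $p=2u(1-\alpha)$, so that $J_+=\{\,|p|\le\theta_0\,\}$, and read off from $S_\pm(u)=1/(\pm 2u(1-\alpha)\pm 2\alpha)$ that the image direction has scaled coordinate $p'=2\,S_\pm(u)(1-\alpha)=\pm\,2(1-\alpha)/(p+2\alpha)$. Since $|p|\le\theta_0<2\alpha$ forces $p+2\alpha>0$, we have $|p'|=2(1-\alpha)/(p+2\alpha)$, a decreasing function of $p$ whose maximum over $[-\theta_0,\theta_0]$ is attained at $p=-\theta_0$ and equals $2(1-\alpha)/(2\alpha-\theta_0)$. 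The quadratic relation gives $2(1-\alpha)/(2\alpha-\theta_0)=\theta_0$ \emph{exactly}, so $|p'|\le\theta_0$, i.e. $S_\pm(u)\in J_+$.

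For $J_-$ the computation is identical in spirit: from $T_\pm(v)=2(1-\alpha)/(-2\alpha\pm v)$ and $|v|\le\theta_0<2\alpha$ one gets $|T_+(v)|=2(1-\alpha)/(2\alpha-v)$ and $|T_-(v)|=2(1-\alpha)/(2\alpha+v)$, each a monotone function of $v$ whose maximum over $[-\theta_0,\theta_0]$ occurs where the denominator is smallest, namely at the value $2\alpha-\theta_0$. The same identity yields $|T_\pm(v)|\le\theta_0$, so $T_\pm(v)\in J_-$.

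The point I want to stress is that there is no genuine obstacle once the correct quadratic is isolated: $\theta_0$ is precisely the number for which the extremal direction on the boundary of the cone is mapped onto the boundary, so the estimate is sharp and the algebraic form of $\theta_0$ is forced. The only care needed is with signs—confirming $p+2\alpha$, $2\alpha-v$, and $2\alpha+v$ are all positive on the relevant range so the absolute values simplify and the maps do not blow up—and with the bookkeeping of the two $\pm$ signs, which I would dispatch by noting $S_-(u)=-S_+(u)$ and $T_-(v)=T_+(-v)$, reducing the four cases to essentially one.
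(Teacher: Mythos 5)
Your proof is correct and follows essentially the same route as the paper: bound the denominator from below by $2\alpha-\theta_0$ on the cone and invoke the defining relation of $\theta_0$. The only difference is cosmetic — you make explicit the quadratic identity $\theta_0(2\alpha-\theta_0)=2(1-\alpha)$ (so the bound is in fact an equality at the extremal direction), where the paper just says the final inequality ``follows from the definition of $\theta_0$,'' and you compress the sign cases via $S_-=-S_+$, $T_-(v)=T_+(-v)$.
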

\begin{proof}
First, note that $\theta_0<\alpha$. We will prove the case of $S_+$. The case for $S_-$ is similar. It follows from $|2u\left( 1-\alpha \right)|\leq\theta_0$ that
\[2\alpha-\theta_0\leq 2\alpha+2u\left( 1-\alpha \right)\leq 2\alpha+\theta_0.\]
Thus,
\begin{equation}\label{ineq_contra}
|2(1-\alpha)S_+(u)|\leq \frac{2(1-\alpha)}{2\alpha-\theta_0}\leq \theta_0,
\end{equation}
where the last inequality follows from the definition of $\theta_0$.

Now we prove the case of $T_+$. The case of $T_-$ is similar. It follows from $|v|\leq\theta_0$ that
\[-3\alpha+\sqrt{\alpha^2+2\alpha-2}\leq -2\alpha+v\leq -\alpha-\sqrt{\alpha^2+2\alpha-2}.\]
Thus,
\begin{equation}\label{ineq_expad}
\begin{split}|T_+(v)|
&= \bigg| \frac{2(1-\alpha)}{-2\alpha+v}\bigg|
=\frac{2(1-\alpha)}{\big| -2\alpha+v\big|}\\
&\leq\frac{2(1-\alpha)}{\alpha+\sqrt{\alpha^2+2\alpha-2}}
=\theta_0.
\end{split}
\end{equation}

\end{proof}

\begin{remark}
We also have $S_\pm(J_+)\subseteq \left\{u\in \mathbb{R}\big| |2u\left( 1-\alpha \right)|\geq \frac{2(1-\alpha)}{2\alpha+\theta_0} \right\}=\left\{u\in \mathbb{R}\big| |u|\geq \theta_1 \right\}$,
$T_\pm(J_-)\subseteq \left\{v\in \mathbb{R}\big| |v|\geq 2(1-\alpha)\theta_1 \right\}$,
where $\theta_1=\frac{1}{2\alpha+\theta_0}$.
\end{remark}

\begin{lemma}\label{Lemma:kappa}(Rychlik \cite{Ry}, Lemma 4)\ 
Let $\kappa=\frac{\alpha-\sqrt{\alpha^2+2\alpha-2}}{\alpha+\sqrt{\alpha^2+2\alpha-2}}$, which is less than 1 (actually it is less than 0.5 and decreasing with respect to $\alpha$). Then,
$\sup_{_{J_+}}|S_\pm'(u)|=\sup_{_{J_-}}|T_\pm'(v)|=\kappa$.
\end{lemma}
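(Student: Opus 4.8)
The plan is to differentiate each of the four Möbius maps $S_\pm,T_\pm$ directly, observe that the resulting four absolute derivatives share one and the same structure, and then locate and evaluate the common supremum. Writing $s=\sqrt{\alpha^2+2\alpha-2}$ throughout (so that $\theta_0=\alpha-s$ and $s>0$ on $(3/4,1)$ since $\alpha^2+2\alpha-2>0$ there), a routine quotient-rule computation gives
\be{|S_\pm'(u)|=\frac{2(1-\alpha)}{\big(2u(1-\alpha)+2\alpha\big)^2},\qquad |T_\pm'(v)|=\frac{2(1-\alpha)}{(2\alpha\mp v)^2}.}
In every case the numerator is the fixed positive constant $2(1-\alpha)$, so maximizing $|S_\pm'|$ over $J_+$ and $|T_\pm'|$ over $J_-$ reduces to minimizing the absolute value of an affine denominator over the corresponding cone.

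Next I would carry out the minimization using the sign information already recorded in the proof of \lemref{inv_cones}, namely $\theta_0<\alpha$ (hence $\theta_0<2\alpha$). On $J_+$ we have $2u(1-\alpha)\in[-\theta_0,\theta_0]$, so the denominator $2u(1-\alpha)+2\alpha$ stays in $[2\alpha-\theta_0,\,2\alpha+\theta_0]$ and is strictly positive; its absolute value is therefore minimized at the endpoint $2u(1-\alpha)=-\theta_0$, giving the value $2\alpha-\theta_0$. The same endpoint analysis applies to $T_\pm$ on $J_-=\{|v|\le\theta_0\}$: because $|v|\le\theta_0<2\alpha$ the denominator $2\alpha\mp v$ keeps a constant sign, and its minimal modulus is again $2\alpha-\theta_0$ (attained at $v=\theta_0$ for $T_+$ and $v=-\theta_0$ for $T_-$, the symmetry of $J_-$ making the two suprema coincide). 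Since $2\alpha-\theta_0=2\alpha-(\alpha-s)=\alpha+s$, all four suprema equal $\dfrac{2(1-\alpha)}{(\alpha+s)^2}$.

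It remains to identify this common value with $\kappa$ and to verify the side claims; the single algebraic fact that makes everything collapse is the identity $\alpha^2-s^2=\alpha^2-(\alpha^2+2\alpha-2)=2(1-\alpha)$, i.e. $2(1-\alpha)=(\alpha-s)(\alpha+s)$. Substituting this into the supremum yields
\be{\frac{2(1-\alpha)}{(\alpha+s)^2}=\frac{(\alpha-s)(\alpha+s)}{(\alpha+s)^2}=\frac{\alpha-s}{\alpha+s}=\kappa,}
which is exactly the asserted equality. Finally, from the form $\kappa=2(1-\alpha)/(\alpha+s)^2$ the monotonicity is transparent: the numerator $2(1-\alpha)$ is positive and strictly decreasing, while $(\alpha+s)^2$ is positive and strictly increasing (as $s^2=\alpha^2+2\alpha-2$ has derivative $2\alpha+2>0$, so $s$ increases with $\alpha$), whence $\kappa$ is strictly decreasing. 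Evaluating at the left endpoint $\alpha=3/4$ gives $s=1/4$ and $\kappa=\tfrac{1/2}{1}=\tfrac12$, so monotonicity forces $\kappa<\tfrac12$ on $(3/4,1)$; and $\kappa<1$ is immediate from $s>0$. I expect no serious obstacle: the only point demanding care is the endpoint identification and sign bookkeeping in the minimization step, everything else being the clean consequence of the identity $2(1-\alpha)=(\alpha-s)(\alpha+s)$.
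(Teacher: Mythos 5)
Your proof is correct and follows essentially the same route as the paper: both amount to computing the derivative of the M\"obius maps explicitly, bounding the extremal value over the cone by $\theta_0$, and closing with the identity $2(1-\alpha)=(\alpha-s)(\alpha+s)$. The only differences are cosmetic --- you minimize the affine denominator directly where the paper writes $|S_\pm'|=2(1-\alpha)S_\pm^2$ and invokes the image bounds (\ref{ineq_contra})--(\ref{ineq_expad}) --- and you additionally verify attainment of the supremum and the side claims ($\kappa<1/2$ and monotonicity), which the paper leaves implicit.
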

\begin{proof}
It follows from (\ref{ineq_contra}) that
\[|S'_\pm(u)|= 2(1-\alpha)S^2_\pm(u)\leq 2(1-\alpha)\left(\frac{\theta_0}{2(1-\alpha)}\right)^2=\kappa.\]

And, it follows from (\ref{ineq_expad}) that
\[|T'_\pm(v)|= \frac{1}{2(1-\alpha)}T^2_\pm(v)\leq \frac{1}{2(1-\alpha)}\theta^2=\kappa.\]
\end{proof}

Using Lemma \ref{inv_cones} and Lemma \ref{Lemma:kappa}, we see that for any sequence $(\eps_0,\eps_1,\dots)\in\{+,-\}^\infty$, we have
$|(T_{\eps_{n-1}}\circ T_{\eps_{n-2}}\circ\dots\circ T_{\eps_{1}}\circ T_{\eps_{0}})(J_-)|\le\kappa^n|J_-|$ so the set $$\bigcap_{n=1}^\infty (T_{\eps_{n-1}}\circ T_{\eps_{n-2}}\circ\dots\circ T_{\eps_{1}}\circ T_{\eps_{0}})(J_-) , $$
consists of exactly one point which can be expressed as a continued fraction:
$$\frac{2(1-\alpha)}{-2\alpha+\frac{\epsilon_02(1-\alpha)}{-2\alpha+
\frac{\epsilon_12(1-\alpha)}{-2\alpha+\frac{\epsilon_22(1-\alpha)}{-2\alpha+\cdots}}}}.$$
Similarly, for any sequence $(\eta_0,\eta_1,\dots)\in\{+,-\}^\infty$, we have
$|(S_{\eta_{n-1}}\circ S_{\eta_{n-2}}\circ\dots\circ S_{\eta_{1}}\circ S_{\eta_{0}})(J_-)|\le\kappa^n|J_-|$ so the set $$\bigcap_{n=1}^\infty (S_{\eta_{n-1}}\circ S_{\eta_{n-2}}\circ\dots\circ S_{\eta_{1}}\circ S_{\eta_{0}})(J_-) , $$
consists of exactly one point which can be expressed as a continued fraction:
$$\frac{\eta_0}{2\alpha+2(1-\alpha)\frac{\eta_1}{2\alpha+2(1-\alpha)\frac{\eta_2}{2\alpha+\cdots}}}.$$
They are both convergent since $\kappa<1$.

Now,  using the above construction we define invariant directions for $G$. For points $p\in  U^s=[0,1]^2\backslash \bigcup\limits_{n=0}^{\infty}G^{-n}(L)$, setting $\eps_i=+$ or $\eps_i=-$, depending
on whether $G^i(p)$ is below or above the line $L$, we obtain the invariant stable direction $v(p)\in J_-$,
\[v(p)=\frac{2(1-\alpha)}{-2\alpha+\frac{\epsilon_02(1-\alpha)}{-2\alpha+
\frac{\epsilon_12(1-\alpha)}{-2\alpha+\frac{\epsilon_22(1-\alpha)}{-2\alpha+\cdots}}}}.\]
To construct an invariant unstable direction for a point $p$ we have to use $G$-preimages of $p$. Since $G$ is not invertible the ``invariant" direction
 will depend on the chosen admissible past of the point $p$.  Some points have only one admissible past,
for example, for the fixed point $(2/3,2/3)$ the only admissible past is $(\dots,2,2,\dots,2,2)$ and
it has the unique well defined unstable direction. Other points have finite number or infinitely many
admissible pasts. The richest case happens when
the directions in the set of ``invariant" directions form a Cantor set, namely the attractor of the Iterated Function System $\{S_+,S_-\}$.
For a point $p\in U^u=[0,1]^2\backslash \bigcup\limits_{n=0}^{\infty}G^{n}(L)$ with specified past $(\dots,k_{n-1},\dots,k_1,k_0)\in\{1,2\}^\infty$ we choose
$\eta_i=+$  when $k_i=1$ or $\eta_i=-$ when $k_i=2$, and obtain the invariant stable direction $u(p)\in J_+$,
\[u(p):=\frac{\eta_0}{2\alpha+2(1-\alpha)\frac{\eta_1}{2\alpha+2(1-\alpha)\frac{\eta_2}{2\alpha+\cdots}}}.\]

\begin{figure}[tbp] 
  \centering
  \includegraphics[bb=0 -1 309 306,width=3.92in,height=3.89in,keepaspectratio]{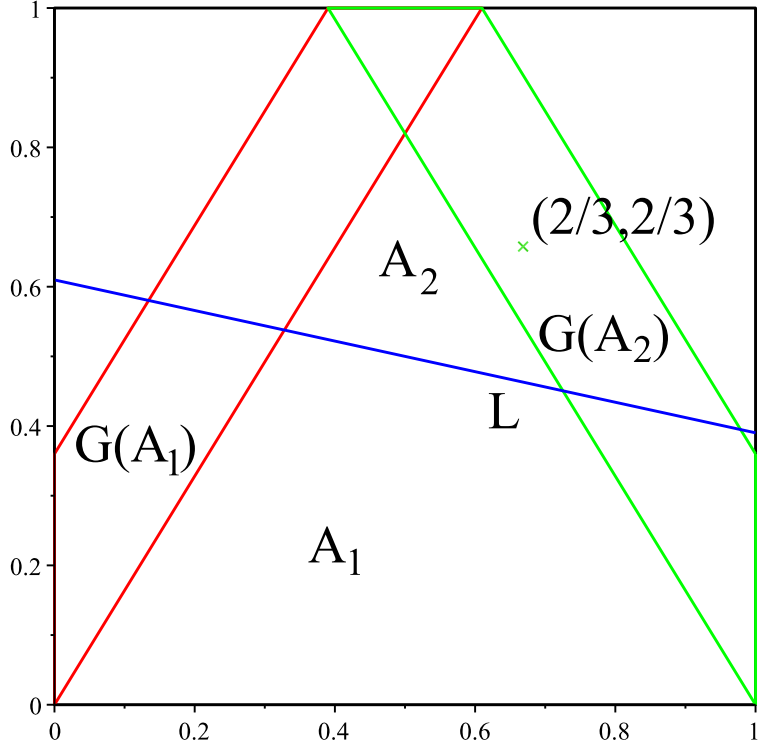}
  \caption{Partition line $L$, regions $A_1, A_2$ and their images, fixed point $(2/3,2/3)$ for $\alpha=0.82$.}
  \label{fig:partition_images}
\end{figure}

We now compute $\lambda^s(p)$ and $\lambda^u(p)$, which represent the rates of change on the length along
directions of $E^s$ and $E^u$, respectively. For directions in $E^u$ the rate is independent of the chosen past of the point.

\begin{lemma}\label{direc_rate} (Rychlik \cite{Ry}, Lemma 5)
\[\lambda^s(p)=|v(p)|\frac{h_1(p)}{h_1(G(p))}, \ \ \lambda^u(p)=|u(p)|\frac{h_2(p)}{h_2(G^{-1}(p))},\]
where,
\[h_1(p)=\frac{1}{\sqrt{(v(p))^2+1}},\ \ h_2(p)=\frac{1}{\sqrt{(u(p))^2+1}}.\]
\begin{proof}
\begin{equation*}\begin{split}DG(p)\left[ \begin {array}{c} 1\\ \noalign{\medskip}v(p)
\end {array}\right]&=\left[ \begin {array}{cc} 0&1\\ \noalign{\medskip}\pm 2\left( 1-\alpha \right)
 & \pm2\alpha
\end {array} \right] \left[ \begin {array}{c} 1\\ \noalign{\medskip}v(p)
\end {array}\right]\\
&= \left[ \begin {array}{c} v(p)\\ \noalign{\medskip}\pm 2\left( 1-\alpha \right)\pm2\alpha v(p)
\end {array}\right]
=v(p)\left[ \begin {array}{c} 1\\ \noalign{\medskip}\pm \frac{2\left( 1-\alpha \right)}{v(p)}\pm2\alpha
\end {array}\right]\\
&=v(p)\left[ \begin {array}{c} 1\\ \noalign{\medskip}T_{\pm}^{-1}(v(p))\end {array}\right]=v(p)\left[ \begin {array}{c} 1\\ \noalign{\medskip}v(G(p))
\end {array}\right].
\end{split}
\end{equation*}
Thus,
\[\lambda^s(p)=\frac{\|v(p)(1, v(G(p)))\|}{\|(1, v(p))\|}=|v(p)|\frac{h_1(p)}{h_1(G(p))}.\]
Similarly,
\begin{equation*}\begin{split}DG^{-1}(p)\left[ \begin {array}{c} u(p)\\ \noalign{\medskip}1
\end {array}\right]&=\left[ \begin {array}{cc} \frac{-\alpha}{1-\alpha}&\pm \frac{1}{2(1-\alpha)}\\ \noalign{\medskip}1
 & 0
\end {array} \right]  \left[ \begin {array}{c} u(p)\\ \noalign{\medskip}1
\end {array}\right]\\
&= \left[ \begin {array}{c} \frac{-\alpha}{1-\alpha}u(p)\pm \frac{1}{2(1-\alpha)}\\ \noalign{\medskip}u(p)
\end {array}\right]=u(p)\left[ \begin {array}{c} \frac{-\alpha}{1-\alpha}\pm \frac{1}{2(1-\alpha)u(p)}\\ \noalign{\medskip}1
\end {array}\right]\\
&=u(p)\left[ \begin {array}{c} S_\pm ^{-1}( u(p))\\ \noalign{\medskip}1
\end {array}\right]=u(p)\left[ \begin {array}{c} u(G^{-1}(p))\\ \noalign{\medskip}1
\end {array}\right],
\end{split}
\end{equation*}
and thus,
\[\lambda^u(p)=\frac{\|u(p)(u(G^{-1}(p)), 1)\|}{\|(u(p), 1)\|}=|u(p)|\frac{h_2(p)}{h_2(G^{-1}(p))}.\]
\end{proof}
\end{lemma}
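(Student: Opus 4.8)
The plan is to prove both identities by the same direct computation: apply the relevant Jacobian matrix to the corresponding invariant direction vector, factor out the scalar that appears, and recognize the remaining vector as the invariant direction at the image point. For the stable rate I would apply $DG(p) = A_\pm$ to the stable vector $(1, v(p))$. The matrix product has first entry $v(p)$ and second entry $\pm 2(1-\alpha) \pm 2\alpha\, v(p)$, so factoring out $v(p)$ rewrites it as $v(p)\,\bigl(1,\ \pm 2\alpha \pm \tfrac{2(1-\alpha)}{v(p)}\bigr)$. The decisive point is that the new slope is exactly $T_\pm^{-1}(v(p))$, and that $T_\pm^{-1}(v(p)) = v(G(p))$, i.e. $DG$ carries the continued-fraction slope of the stable line at $p$ to that of the stable line at $G(p)$. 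Granting this, $\lambda^s(p)$ is the ratio of lengths $\norm{DG(p)(1,v(p))}/\norm{(1,v(p))} = |v(p)|\,\norm{(1,v(G(p)))}/\norm{(1,v(p))}$, and since $\norm{(1,v)} = 1/h_1$ at the relevant point, this collapses to $|v(p)|\,h_1(p)/h_1(G(p))$.

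The unstable rate is handled symmetrically, using the inverse branch in place of $G$. I would apply $DG^{-1}(p) = B_\pm$ to the unstable vector $(u(p), 1)$, obtaining a vector with entries $\tfrac{-\alpha}{1-\alpha}u(p) \pm \tfrac{1}{2(1-\alpha)}$ and $u(p)$; factoring out $u(p)$ turns this into $u(p)\,\bigl(\tfrac{-\alpha}{1-\alpha} \pm \tfrac{1}{2(1-\alpha)u(p)},\ 1\bigr)$, whose first slope is $S_\pm^{-1}(u(p))$. The analogous transfer relation $S_\pm^{-1}(u(p)) = u(G^{-1}(p))$ identifies this with the unstable slope at the chosen preimage, and taking the ratio of norms with $\norm{(u,1)} = 1/h_2$ yields $\lambda^u(p) = |u(p)|\,h_2(p)/h_2(G^{-1}(p))$.

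The only genuine content beyond this bookkeeping is the two transfer relations $v(G(p)) = T_\pm^{-1}(v(p))$ and $u(G^{-1}(p)) = S_\pm^{-1}(u(p))$, which I expect to be the step needing care. Both are consequences of the shift structure of the continued fractions defining $v$ and $u$. Writing $\epsilon_i = \pm$ according to whether $G^i(p)$ lies below or above $L$, the definition of $v(p)$ as a continued fraction reads $v(p) = T_{\epsilon_0}(v(G(p)))$, because peeling off the outermost level $T_{\epsilon_0}$ leaves precisely the continued fraction of the once-shifted sign sequence, which is $v(G(p))$; inverting gives the claim, and the matrix sign $\pm$ matches $\epsilon_0$ since both are fixed by the branch of $G$ acting at $p$ (equivalently, by the side of $L$ on which $p$ lies). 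The unstable relation is identical, with $S_{\eta_0}$ replacing $T_{\epsilon_0}$ and the shift acting on the admissible past that selects the branch of $G^{-1}$. Once these are in place both formulas follow immediately, and the functions $h_1$ and $h_2$ serve only to package the Euclidean norms $\norm{(1,v)}$ and $\norm{(u,1)}$.
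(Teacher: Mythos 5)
Your computation is exactly the paper's proof: apply $A_\pm$ to $(1,v(p))$ (resp.\ $B_\pm$ to $(u(p),1)$), factor out the scalar, identify the resulting slope as $T_\pm^{-1}(v(p))=v(G(p))$ (resp.\ $S_\pm^{-1}(u(p))=u(G^{-1}(p))$), and read off the rate as a ratio of norms via $h_1,h_2$. The extra justification you give for the transfer relations via the shift on the continued-fraction expansions is correct and is left implicit in the paper, so the proposal is complete and matches the paper's approach.
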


We need the conditions that both $\theta_0$ and $\frac{\theta_0}{2(1-\alpha)}$ are less than $1$,
which hold since $\alpha\in(\frac{3}{4}, 1)$.

Now we present a proposition analogous to Proposition 5 in Rychlik \cite{Ry}.
\begin{proposition}\label{lambdas} 
Let $\lambda_+=\frac{\theta_0}{2(1-\alpha)}$, $\lambda_-=\theta_0$. Then both $\lambda_+, \lambda_-\in(0,1)$. And there
exists a constant $C>0$ such that $|\lambda^s_n(p)|\leq C \lambda^n_-$ if $p\in U^s$, $|\lambda^u_n(p)|\leq C \lambda^n_+$ if $p\in U^u$.
\end{proposition}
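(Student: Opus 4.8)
The plan is to write the $n$-step contraction/expansion rates as products of the one-step rates supplied by Lemma~\ref{direc_rate}, to telescope the factors $h_1$ and $h_2$, and to control the two remaining pieces using the cone invariance of Lemma~\ref{inv_cones}. Concretely, I take $\lambda^s_n(p)=\prod_{k=0}^{n-1}\lambda^s(G^k p)$ and $\lambda^u_n(p)=\prod_{k=0}^{n-1}\lambda^u(G^{-k}p)$, the second product running along an admissible backward orbit, consistent with the definition of $\lambda^u$ through $DG^{-1}$ in Lemma~\ref{direc_rate}.

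I would first dispose of the elementary claim $\lambda_\pm\in(0,1)$. Positivity of $\theta_0$ on $(3/4,1)$ follows because $\alpha<1$ gives $\alpha^2>\alpha^2+2\alpha-2$, hence $\alpha>\sqrt{\alpha^2+2\alpha-2}$ and $\theta_0>0$; the upper bounds $\theta_0<1$ and $\theta_0/(2(1-\alpha))<1$ are precisely the two inequalities recorded immediately before the statement. Thus $\lambda_-=\theta_0\in(0,1)$ and $\lambda_+=\theta_0/(2(1-\alpha))\in(0,1)$.

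For the stable estimate, substituting Lemma~\ref{direc_rate} and telescoping the consecutive $h_1$-factors gives
\[
\lambda^s_n(p)=\Bigl(\prod_{k=0}^{n-1}|v(G^k p)|\Bigr)\frac{h_1(p)}{h_1(G^n p)}.
\]
Since every point of $U^s$ has its stable direction in $J_-$, Lemma~\ref{inv_cones} yields $|v(q)|\le\theta_0=\lambda_-$ at each point $q$ of the orbit, so the product is at most $\lambda_-^n$. The same bound $|v|\le\theta_0$ forces $h_1(q)=(v(q)^2+1)^{-1/2}\in[(1+\theta_0^2)^{-1/2},1]$, whence the telescoped ratio $h_1(p)/h_1(G^n p)$ is at most $(1+\theta_0^2)^{1/2}=:C_-$, and therefore $|\lambda^s_n(p)|\le C_-\lambda_-^n$. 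The unstable estimate is symmetric: the identical telescoping leaves $\bigl(\prod_{k=0}^{n-1}|u(G^{-k}p)|\bigr)\,h_2(p)/h_2(G^{-n}p)$, and since the unstable direction of any point of $U^u$ lies in $J_+$ one has $|u|\le\theta_0/(2(1-\alpha))=\lambda_+$ and $h_2\in[(1+\lambda_+^2)^{-1/2},1]$, giving $|\lambda^u_n(p)|\le C_+\lambda_+^n$ with $C_+=(1+\lambda_+^2)^{1/2}$. Setting $C=\max(C_-,C_+)$ completes the argument.

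The only point requiring genuine care, and the nearest thing to an obstacle, is the unstable case: because $G$ is not invertible the backward orbit $G^{-k}(p)$ is not unique, so $\lambda^u_n(p)$ a priori depends on the chosen admissible past. The estimate survives precisely because the cone bound $|u(q)|\le\lambda_+$ holds for \emph{every} admissible past—this is the past-independence of $\lambda^u$ observed before Lemma~\ref{direc_rate}—so both the product and the $h_2$-ratio are bounded uniformly in the choice of past, and the constant $C$ depends only on $\alpha$.
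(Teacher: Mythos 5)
Your proof is correct and follows essentially the same route as the paper: telescope the $h_1$ (resp.\ $h_2$) factors in the product of one-step rates from Lemma~\ref{direc_rate}, bound each $|v|$ by $\theta_0$ and each $|u|$ by $\theta_0/(2(1-\alpha))$ via the cone invariance of Lemma~\ref{inv_cones}, and absorb the boundedness of $h_1,h_2$ into the constant $C$. Your explicit remark that the unstable bound is uniform over all admissible pasts is a point the paper leaves implicit, but it is the same argument.
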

\begin{proof}
Using Lemma \ref{inv_cones} and the invariant sets $J_+$ and $J_-$, it follows that $h_1$ and $h_2$ are bounded, i.e. there exists numbers $c_1$ and $c_2$ such that $0<c_1\leq h_i\leq c_2$,
 $i=1,2$. Thus, by Lemma \ref{direc_rate}
\begin{equation*}\begin{split}
\lambda^s_n(p)&=\lambda^s(G^{n-1}(p))\cdot\lambda^s(G^{n-2}(p))\cdots\lambda^s(G(p))\lambda^s(p)\\
&=|v(G^{n-1}(p))|\frac{h_1(G^{n-1}(p))}{h_1(G^n(p))}\cdot|v(G^{n-2}(p))|\frac{h_1(G^{n-2}(p))}{h_1(G^{n-1}(p))}\cdots\\
&\quad\cdot|v(G(p))|\frac{h_1(G(p))}{h_1(G^2(p))}\cdot|v(p)|\frac{h_1(p)}{h_1(G(p))}\\
&=|v(G^{n-1}(p))|\cdot|v(G^{n-2}(p))|\cdots\cdot|v(p)|\frac{h_1(p)}{h_1(G^n(p))}\\
&\leq \frac{c_2}{c_1}\theta_0^n.
\end{split}
\end{equation*}
Similarly, we have
\[\lambda^u_n(p)\leq \frac{c_2}{c_1}\left(\frac{\theta_0}{2(1-\alpha)}\right)^n.\]

\end{proof}


Let $\mathcal P=\mathcal P^{(1)}$ be the partition of the square $[0,1]^2$ into the regions of definition of the map $G$, i.e.,
$A_1=\{(x,y): \alpha y+(1-\alpha)x\le 1/2\}$ and $A_2=\{(x,y): \alpha y+(1-\alpha)x\ge 1/2\}$. These regions intersect, but the intersection is a negligible set both in a measure-theoretic and topological sense.
We define $\mathcal P^{(n)}= \mathcal P\bigvee G^{-1}(\mathcal P)\bigvee G^{-2}(\mathcal P)\bigvee\dots\bigvee G^{n-1}(\mathcal P)$.
$\mathcal P^{(n)}$ is the defining partition  for the map $G^n$.

Let $L$ denote the partition line $$L=\{p=(x,y): \alpha y+(1-\alpha) x=1/2\}.$$

\begin{lemma}\label{cover} (Rychlik \cite{Ry}, Lemma 8) For every $N\ge 1$ there is an open cover $\mathcal U_N$ of the unit square such that every element of $\mathcal U_N$ intersects no more than $2N$ elements of $\mathcal P^{(N)}$.
\end{lemma}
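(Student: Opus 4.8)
The plan is to describe the singularity set of $G^N$ explicitly and then cover the square by neighborhoods adapted to it. Since $\mathcal P^{(N)}=\bigvee_{k=0}^{N-1}G^{-k}(\mathcal P)$ and $\mathcal P=\{A_1,A_2\}$ is cut out by the single line $L$, the common boundary of the elements of $\mathcal P^{(N)}$ is $\Gamma_N=\bigcup_{k=0}^{N-1}G^{-k}(L)$. Because $G$ is piecewise affine with two branches, each $G^{-k}(L)$ is a finite union of straight segments (at most $2^k$ of them), so for fixed $N$ the set $\Gamma_N$ is a finite arrangement of segments and the elements of $\mathcal P^{(N)}$ are exactly the connected components of $[0,1]^2\setminus\Gamma_N$. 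The element containing a point $p$ is determined by its itinerary $(i_0,\dots,i_{N-1})\in\{1,2\}^N$, where $G^j(p)\in A_{i_j}$, and as $p$ moves the coordinate $i_j$ can change only when $p$ crosses $G^{-j}(L)$.

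First I would record the geometric fact that makes this arrangement tame: the direction of $L$ lies in the stable cone $J_-$. Indeed $L$ has direction $(1,v)$ with $v=-(1-\alpha)/\alpha$, and $|(1-\alpha)/\alpha|\le\theta_0$ for every $\alpha\in(3/4,1)$ (squaring, the inequality reduces to $(\alpha-1)^2\ge 0$). Since $J_-$ is $T_\pm$-invariant by Lemma \ref{inv_cones}, and each segment of $G^{-k}(L)$ is carried from $L$ by a composition of the inverse-branch direction maps $T_\pm$, every segment of $\Gamma_N$ has direction in $J_-$. Thus all segments are nearly parallel to the stable direction; any two are either parallel or meet transversally, with a uniform lower bound on the crossing angle coming from the small aperture of $J_-$, so there are no tangencies.

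With the arrangement understood I would build the cover pointwise. Because $\Gamma_N$ is a finite union of segments, for each $p$ the distance $\delta(p)$ from $p$ to those segments of $\Gamma_N$ not containing $p$ is positive; set $U_p=B(p,\delta(p)/2)$ and $\mathcal U_N=\{U_p\}$. Then $U_p$ meets only the segments of $\Gamma_N$ through $p$, so the number of elements of $\mathcal P^{(N)}$ meeting $U_p$ equals the number of local sectors these segments cut around $p$. I would argue that each $G^{-k}(L)$ contributes at most one arc through $p$: on each local region on which $G^k$ is a single affine map, $G^{-k}(L)$ is a single segment, and across a fold the two local segments join into one bent arc, giving at most two rays per value of $k$. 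With $N$ values of $k$ this yields at most $2N$ rays, hence at most $2N$ sectors, so every $U_p$ meets at most $2N$ elements of $\mathcal P^{(N)}$, and $\{U_p\}$ (or a finite subcover) is the required $\mathcal U_N$.

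The hard part will be exactly the non-invertibility of $G$, i.e.\ the folding along the curves $G^{-j}(L)$: at a point $p$ lying simultaneously on several lower curves, $G^k$ splits into several local affine branches, and one must rule out that $G^{-k}(L)$ emanates a separate ray into each surrounding sector, which would spoil the linear bound $2N$. This is precisely the feature absent in Rychlik's invertible Lozi map. I expect to control it using the stable-cone structure of Lemma \ref{inv_cones} together with the contraction estimate of Proposition \ref{lambdas}: all strands of $\Gamma_N$ share the stable direction, so near $p$ they are graphs over a common transverse axis with comparable slopes, forcing each $G^{-k}(L)$ to pass through $p$ as a single (possibly bent) arc rather than a genuine multi-ray branch point, and thereby keeping the local sector count at most $2N$.
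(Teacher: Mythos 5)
Your setup is the right one and matches the strategy behind Rychlik's Lemma 8 (which the paper simply cites without reproducing): describe $\Gamma_N=\bigcup_{k=0}^{N-1}G^{-k}(L)$, check that all its segments have directions in the stable cone (your computation that the direction of $L$ lies in $J_-$, reducing to $(\alpha-1)^2\ge 0$, is correct), reduce the problem to counting the local sectors at a point $p$, and take balls of radius half the distance to the strands not through $p$. You also correctly locate the crux: one must show that each $G^{-k}(L)$ contributes at most two rays at $p$, so that $N$ curves give at most $2N$ rays and hence at most $2N$ sectors. But the argument you offer for the crux does not close it. The cone condition cannot do this job: $J_-$ is a double cone of directions, and four distinct rays from $p$ (slopes $+\theta_0$ and $-\theta_0$, to the left and to the right) all have directions in $J_-$; being ``graphs over a common transverse axis with comparable slopes'' is exactly the situation of the zero set of $|y|-\theta_0|x|$, which has four rays. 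So nothing in the stable-cone structure forces $G^{-k}(L)$ to be a single arc through $p$.

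The danger is real and comes precisely from the fold. If $p\in L$, then $q=G(p)$ lies on $G(L)\subset\{y=1\}$, and a small ball around $p$ is two half-disks (one in $A_1$, one in $A_2$) each mapped \emph{homeomorphically onto the same} half-neighborhood $\{y\le 1\}$ of $q$. Hence every ray of $G^{-(k-1)}(L)$ at $q$ that enters $\{y<1\}$ pulls back to \emph{two} rays of $G^{-k}(L)$ at $p$. If $G^{-(k-1)}(L)$ near $q$ were a bent arc with both rays entering the square, you would get four rays of $G^{-k}(L)$ at $p$ and the count $2N$ would fail. Ruling this out requires map-specific geometry that your sketch does not use: for instance, $L\cap\{y=1\}$ lies outside $[0,1]^2$ for $\alpha\in(3/4,1)$, hence $L\cap G^{-1}(L)=\emptyset$ and consecutive preimages of $L$ never meet; the slopes of the strands are never zero, so a \emph{straight} strand through a point of the top edge sends exactly one ray into the square, and its pullback through the fold is a single bent arc (two rays, one in $A_1$ and one in $A_2$); and one must then control, inductively, the points where strands through the top edge are themselves bent. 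Until an argument of this kind is supplied, the step ``each $G^{-k}(L)$ gives at most two rays at $p$'' — and with it the bound $2N$ — remains unproved; this is exactly the place where non-invertibility changes Rychlik's proof rather than leaving it untouched.
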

The proof is exactly the same as in \cite{Ry}.

\begin{proposition} \label{Rych7}(Rychlik \cite{Ry}, Proposition 7) There exist constants $F>0$ and $0<r<1$ such that for any segment $I$ with the direction from the unstable cone $J_+$ we have 
\begin{equation}\label{ineq:P7}
\Gamma_n(I)=\sum_{J\in \mathcal P^{(n)}|I} \frac{|J|}{|G^n(J)|}\le F(r^n+|I|),
\end{equation}
where $|\cdot|$ denotes the length of the segment.
\end{proposition}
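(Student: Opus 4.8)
The strategy is to exploit the self--similar structure of the refining partitions and reduce the estimate to short segments, where the combinatorial complexity is controlled by Lemma~\ref{cover}. Throughout I use that the unstable cone $J_+$ is forward invariant (Lemma~\ref{inv_cones}), so each branch of $G$ carries an unstable segment to an unstable segment, and that $G^n$ is affine on every element of $\mathcal P^{(n)}$; hence for $J\in\mathcal P^{(n)}|I$ the ratio $|J|/|G^nJ|$ is the reciprocal expansion factor, which by Proposition~\ref{lambdas} satisfies $|J|/|G^nJ|\le C\lambda_+^n$. The basic tool is the block recursion: writing $\mathcal P^{(n)}=\mathcal P^{(k)}\vee G^{-k}\mathcal P^{(n-k)}$ and setting $E_J:=|G^kJ|/|J|$ for $J\in\mathcal P^{(k)}|I$, every piece of $\mathcal P^{(n)}|I$ lying in $J$ has its $G^k$--image in $\mathcal P^{(n-k)}|G^k(J)$, so
\be{\Gamma_n(I)=\sum_{J\in\mathcal P^{(k)}|I}\frac{1}{E_J}\,\Gamma_{n-k}\bigl(G^k(J)\bigr),\qquad \sum_{J\in\mathcal P^{(k)}|I}\frac{1}{E_J}=\Gamma_k(I),\quad \sum_{J}|J|=|I|.}
The naive choice $k=1$ fails, since $\mathcal P|I$ has up to two pieces while each $1/E_J$ is only bounded by a number close to $1$, so the one--step factor $\sum_J 1/E_J$ may exceed $1$ and the iteration diverges.

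The remedy is to iterate $N$ steps at once for a large fixed $N$. First I would prove a base estimate. Choose $N$ so large that $\rho:=2N\,C\lambda_+^N<1$, which is possible because $\lambda_+<1$, and let $\delta_N>0$ be a Lebesgue number of the cover $\mathcal U_N$ from Lemma~\ref{cover}. If $|I|<\delta_N$ then $I$ lies in one element of $\mathcal U_N$, so $\mathcal P^{(N)}|I$ has at most $2N$ pieces, each contributing at most $C\lambda_+^N$, giving $\Gamma_N(I)\le\rho$. For arbitrary $I$ I cut it into $\lceil |I|/\delta_N\rceil$ subsegments of length $<\delta_N$; since subdividing a piece replaces a term $1/E$ by $2/E$ and hence only increases the sum, $\Gamma_N$ is subadditive under cutting, so
\be{\Gamma_N(I)\le\Bigl(\tfrac{|I|}{\delta_N}+1\Bigr)\rho=:a+b|I|,\qquad a=\rho<1,\ b=\rho/\delta_N.}

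Next I would bootstrap through blocks of length $N$. Applying the recursion with $k=N$, $n=kN$, bounding the outer factor $\Gamma_N(I)=\sum_J 1/E_J$ by the base estimate, and using $\sum_J|G^NJ|/E_J=\sum_J|J|=|I|$ to keep the length coefficient geometric, an induction on $k$ yields $\Gamma_{kN}(I)\le a^k+\frac{b}{1-a}|I|$. For general $n=kN+s$ with $0\le s<N$, the crude bound $\Gamma_s(I')\le 2^N=:M$ (at most $2^s$ pieces, each ratio $\le 1$) together with one further application of the recursion gives $\Gamma_n(I)\le M\,\Gamma_{kN}(I)$, so with $r:=a^{1/N}\in(0,1)$,
\be{\Gamma_n(I)\le M\bigl(r^{kN}+\tfrac{b}{1-a}|I|\bigr)\le F\,(r^n+|I|),\qquad F:=M\max\{r^{-N},\,b/(1-a)\},}
which is the asserted inequality.

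The main obstacle is precisely the passage from the divergent one--step recursion to a convergent block recursion: one must show that a short unstable segment meets only $O(N)$ elements of $\mathcal P^{(N)}$, which is exactly the content of Lemma~\ref{cover}. Without it the piece count is a priori exponential, since each of the up to $2^k$ branches of $G^{-k}(L)$ can cut $I$, and $2^N\lambda_+^N$ need not tend to $0$. The non--invertibility of $G$ enters here, as it is what allows $G^{-k}(L)$ to have many branches; this is the point at which the argument departs from the invertible Lozi case.
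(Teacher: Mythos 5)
Your argument is correct, and it rests on exactly the same two ingredients as the paper's proof: the choice of $N$ with $2NC\lambda_+^N<1$ and the Lebesgue number of the cover $\mathcal U_N$ from Lemma~\ref{cover}, with Proposition~\ref{lambdas} supplying the per-block bound $|J|/|G^NJ|\le C\lambda_+^N$. The bookkeeping, however, is organized differently. You peel off the \emph{first} $N$ iterates, establish the uniform affine bound $\Gamma_N(I)\le a+b|I|$ for every unstable segment by cutting a long $I$ into $O(|I|/\delta_N)$ short pieces and using that $\Gamma_N$ can only increase under subdivision (each ratio $|J|/|G^NJ|$ depends only on the expansion factor, so a cut at worst doubles a term), and then close an induction in which the $|I|$-coefficient accumulates geometrically through the identity $\sum_J |G^NJ|/E_J=\sum_J|J|=|I|$; note that the induction closes with the running constants $c_{k+1}=c_k+a^k b\le b/(1-a)$, not with $b/(1-a)$ itself as the inductive hypothesis. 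The paper instead peels off the \emph{last} $N$ iterates and derives $\gamma_{n+1}\le r_0\gamma_n+\frac{R_0}{\eps_0}|I|$ by a dichotomy on each piece $J\in\mathcal P^{(nN)}|I$: Lemma~\ref{cover} when the piece is short, and the trade-off $|J|/|G^{nN}J|\le|J|/\eps_0$ against the crude constant $R_0=\sup_I\Gamma_N(I)$ when it is long. Your organization has the mild advantage that Lemma~\ref{cover} is only ever applied to a segment that is itself of length below the Lebesgue number, which is precisely its hypothesis; in the paper's case (1) the partition being counted is $G^{-nN}(\mathcal P^{(N)})|J$, i.e.\ the trace of $\mathcal P^{(N)}$ on $G^{nN}(J)$, so the quantity that must be small is $|G^{nN}(J)|$ rather than $|J|$, and the dichotomy there is best read as taken on $|G^{nN}(J)|$ (which is also what case (2) actually uses). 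The two treatments of the remainder $n=kN+\ell$ are equivalent: the paper restarts the recursion from $\mathcal P^{(\ell)}|I$ with constants $\bar R_\ell$, while you append a crude factor $2^N$ at the end. Both routes yield (\ref{ineq:P7}) with comparable constants.
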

\begin{proof}
The proof follows closely the proof from \cite{Ry}. We choose $N$ in such a way that 
$$r_0=2N C(\lambda_+)^N<1,$$
 where $C$ and $ \lambda_+$ are from Proposition \ref{lambdas}. Let $\eps_0$ be the Lebesgue constant of the cover $\mathcal U_N$ from Lemma \ref{cover}. Let us define
\begin{equation}
\gamma_n=\sum_{J\in \mathcal P^{(nN)}|I} \frac{|J|}{|G^{nN}(J)|},\ \ \ n=1,2,\dots
\end{equation}
We will show that 
\begin{equation}\label{inequality_gamma}
\gamma_{n+1}\le r_0\gamma_n+\frac 1{\eps_0}R_0|I|,
\end{equation}
where $$R_0=\sup_I \gamma_1=\sup_I \sum_{J\in \mathcal P^{(N)}|I} \frac{|J|}{|G^{N}(J)|},$$ and $I$ is any segment with the direction from the unstable cone $J_+$.

Let $J\in \mathcal P^{(nN)}|I$. Either $|J|<\eps_0$ or $|J|\ge\eps_0$. In the first case
$\mathcal P^{((n+1)N)}|J$ consists of not more that $2N$ elements (Lemma\ref{cover}) as the partition
$\mathcal P^{((n+1)N)}$ is obtained from $\mathcal P^{(nN)}$ in $N$ steps. Thus,
\begin{equation}\label{est1}\begin{split}
\sum_{J'\in \mathcal P^{((n+1)N)}|J} \frac{|J'|}{|G^{(n+1)N}(J')|}&\le 2N \max_{J'} \frac{|J'|}{|G^{(n+1)N}(J')|}\\
&= 2N \ \max_{J'}\left(\frac{|J'|}{|G^{nN}(J')|}\ \frac{|G^{nN}(J')|}{|G^{(n+1)N}(J')|}\right)\\
&= 2N\ \frac{|J|}{|G^{nN}(J)|}\ \max_{J'} \frac{|G^{nN}(J')|}{|G^{(n+1)N}(J')|}\\
&\le 2N \frac{|J|}{|G^{nN}(J)|}\ C (\lambda_+)^N=r_0 \frac{|J|}{|G^{nN}(J)|}.
\end{split}
\end{equation}
We have used the fact that $J'\subset J$ and $G^{nN}$ is a linear transformation on $J$, so the expansion rate is uniform on $J$.

In the second case we have
\begin{equation}\label{est2}\begin{split}
\sum_{J'\in \mathcal P^{((n+1)N)}|J} \frac{|J'|}{|G^{(n+1)N}(J')|}&= \sum_{J'\in \mathcal P^{((n+1)N)}|J} \left(\frac{|J'|}{|G^{nN}(J')|}\ \frac{|G^{nN}(J')|}{|G^{(n+1)N}(J')|}\right)\\
&= \frac{|J|}{|G^{nN}(J)|}\  \sum_{J'\in \mathcal P^{((n+1)N)}|J} \frac{|G^{nN}(J')|}{|G^{(n+1)N}(J')|}\\
&\le \frac{|J|}{|G^{nN}(J)|}R_0\le R_0\frac 1{\eps_0}{|J|}.
\end{split}
\end{equation}
We used again the linearity of $G^{nN}$ on $J$. Moreover
$$\sum_{J'\in \mathcal P^{((n+1)N)}|J} \frac{|G^{nN}(J')|}{|G^{(n+1)N}(J')|}\le
\sum_{K\in \mathcal P^{(N)}|G^{nN}(J)} \frac{|K|}{|G^{N}(K)|}\le R_0, $$
since intervals $G^{nN}(J')$ are elements of $\mathcal P^{(N)}|G^{nN}(J)$ and $G^{nN}(J)$ has the direction 
from $J_+$. Also $|G^{nN}(J)|> |J|>\eps_0$.

Summing up (\ref{est1}) and (\ref{est2}) over all  $J\in \mathcal P^{(nN)}|I$, we obtain
\begin{equation}\begin{split}
\sum_{J'\in \mathcal P^{((n+1)N)}|I} \frac{|J'|}{|G^{(n+1)N}(J')|}&\le \sum_{J\in \mathcal P^{(nN)}|I} \left(r_0 \frac{|J|}{|G^{nN}(J)|}+R_0\frac 1{\eps_0}{|J|}\right)\\
&= r_0 \gamma_n+ R_0\frac 1{\eps_0} |I|,
\end{split}
\end{equation}
and 
(\ref {inequality_gamma}) is proved.
To obtain inequality (\ref{ineq:P7}) from (\ref{inequality_gamma}) we proceed as follows.
Since $\gamma_1\le R_0$ by definition, the inequality (\ref{inequality_gamma}) implies
\begin{equation}
\gamma_n\le r_0^nR_0 +\frac {R_0}{\eps_0(1-r_0)}|I|, \ \ n=1,2,\dots ,
\end{equation}
or, using capital gamma notation
$$\Gamma_{nN}(I)\le r_0^nR_0 +\frac {R_0}{\eps_0(1-r_0)}|I|, \ \ n=1,2,\dots. $$
Let us define $$\bar R_i=\sup_I \Gamma_i(I)=\sup_I\sum_{J\in \mathcal P^{(i)}|I} \frac{|J|}{|G^{i}(J)|},\ \ \ i=1,2,\dots,N ,$$
where $\sup$ is taken over all segments with the direction in the expanding cone $J^+$. Of course $\bar R_N=R_0$. Let $R=\max\{\bar R_1,\bar R_2,\dots,\bar R_N\}$.
Let us consider arbitrary $n\ge 1$ and represent it as $n=k\cdot N+\ell$, $0<\ell\le N$.
Similarly as above,using in all considerations  as the initial partition $\mathcal P^{(\ell)}|I$ instead of
 $\mathcal P^{(N)}|I$, we can prove that
$$\Gamma_n(I)\le r_0^k \bar R_\ell+ \frac {\bar R_\ell}{\eps_0(1-r_0)}|I|.$$
 To make these estimates independent of $\ell$ we can write
$$\Gamma_n(I)\le r_0^k  R+ \frac { R}{\eps_0(1-r_0)}|I|.$$
Now, let $r=(r_0)^{1/N}$ and $F=\max\{\frac{R}{r^{N-1}}, \frac { R}{\eps_0(1-r_0)}\}$.
We obtain inequality (\ref{ineq:P7}).
\end{proof}
We define $\mathcal P^{-}=\bigvee_{n=0}^\infty  G^{-n}(\mathcal P)$. Elements of $\mathcal P^{-}$ are either segments with
 direction from the stable cone or points. Let $\xi(p)\in \mathcal P^{-}$ denote an element of $\mathcal P^{-}$ containing point $p$.

\begin{lemma}\label{lem:9} (corresponds to Lemma 9 of \cite{Ry}) Let 
\begin{equation}\label{def:D^s}
D^s(\delta)=\{p\in [0,1]^2:{\rm dist} (G^np, L)\ge \delta \lambda^s_n(p) , {\rm for}\ n=0,1,2,\dots\}.
\end{equation}
For every $p\in D^s(\delta)$ the distance from $p$ to the endpoints of $\xi(p)$ is not smaller than $\delta$. 
In particular, $|\xi(p)|\ge 2 \delta$.
\end{lemma}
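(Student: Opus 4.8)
The plan is to reduce the statement to a single distortion estimate applied separately at each endpoint of $\xi(p)$, and then add the two contributions. First I would record the structural facts that make the argument work. Since $\xi(p)$ is the element of $\mathcal P^{-}=\bigvee_{n\ge 0}G^{-n}(\mathcal P)$ containing $p$, every point of $\xi(p)$ shares the forward itinerary $(\eps_0,\eps_1,\dots)$ of $p$; hence the stable direction $v(\cdot)$ constructed above is constant along $\xi(p)$ and equal to $v(p)\in J_-$. Because $G$ is piecewise linear and the interior boundaries of the regions of $\mathcal P^{(m)}$ are contained in $\bigcup_{k<m}G^{-k}(L)$, the set $\xi(p)$ is a genuine straight segment carrying the single direction $v(p)$, and each of its endpoints $q$ is a first point at which $\xi(p)$ meets the singularity set $\bigcup_{n\ge0}G^{-n}(L)$. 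That is, there is an integer $n\ge 0$ with $G^n(q)\in L$ while the half-open segment $[p,q)$ crosses none of the lines $G^{-k}(L)$, $0\le k\le n-1$, and so $[p,q]$ lies in a single element of $\mathcal P^{(n)}$.

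With this in place, the core of the proof is a one-line estimate. Fix an endpoint $q$ and its associated $n$. On $[p,q]$ the map $G^n$ is affine, and since $[p,q]$ carries the stable direction $v(p)$, its image has length exactly $\lambda^s_n(p)\,|p-q|$: this is the content of Lemma \ref{direc_rate}, whose computation $DG(p)(1,v(p))=v(p)(1,v(G(p)))$ identifies $\lambda^s$ — and hence, by telescoping as in Proposition \ref{lambdas}, $\lambda^s_n$ — as the \emph{exact} length-scaling factor along the stable direction on each linear piece of $G$. Consequently, using that $G^n(q)\in L$,
\be{
\delta\,\lambda^s_n(p)\ \le\ {\rm dist}(G^n p,L)\ \le\ \left|G^n p-G^n q\right|\ =\ \lambda^s_n(p)\,\left|p-q\right|,
}
where the first inequality is the defining property of $D^s(\delta)$ and the middle inequality holds because $G^n(q)$ is a point of $L$. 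Dividing by $\lambda^s_n(p)>0$ yields $|p-q|\ge\delta$. (The instance $n=0$, where $q\in L$ and $\lambda^s_0(p)=1$, simply recovers ${\rm dist}(p,L)\ge\delta$, a useful sanity check.)

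Applying this to each of the two endpoints of $\xi(p)$ shows that $p$ is at distance at least $\delta$ from both; since $p$ lies between them on the segment, $|\xi(p)|\ge 2\delta$, which is the claim. I expect the only genuinely delicate step to be the structural one: verifying that $\xi(p)$ is a straight segment with a single stable direction and that each endpoint is realized as a point of some $G^{-n}(L)$, so that an integer $n$ with $G^n(q)\in L$ is actually available and $[p,q]$ sits in one piece of $\mathcal P^{(n)}$. Once the endpoint is matched with the correct iterate $n$, the identification of the length distortion with $\lambda^s_n(p)$ and the final cancellation are immediate. A separate minor point is that an endpoint could in principle fall on $\partial([0,1]^2)$ rather than on the singularity set; this is handled by the standing conventions on the domain of $G$, after which the estimate above applies verbatim to every endpoint on $\bigcup_{n\ge 0}G^{-n}(L)$.
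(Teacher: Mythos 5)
Your proof is correct and follows essentially the same route as the paper's: both rest on the facts that each endpoint $q$ of $\xi(p)$ lies in some $G^{-k}(L)$ and that $G^k$ contracts the stable segment $[p,q]$ by exactly $\lambda^s_k(p)$, yielding ${\rm dist}(G^kp,L)\le\lambda^s_k(p)\,{\rm dist}(p,q)$; the paper phrases this as a contradiction with $p\in D^s(\delta)$ while you run the inequality directly, which is an immaterial difference.
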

\begin{proof} Assume that the distance from $p$ to one of the endpoints of $\xi(p)$ called $q$
 is ${\rm dist}(p,q)<\delta$. Since endpoints of elements $\xi$ belong to preimages $G^{-n}(L)$,
there is an integer $k\ge 0$ such that $q\in G^{-k}(L)$. Then, 
 $${\rm dist}(G^k p,L)\le {\rm dist}(G^k p,G^k q)\le \lambda^s_k(p){\rm dist}(p,q)<\delta \lambda^s_k(p) ,$$
which contradicts $p\in D^s(\delta)$.
\end{proof}
\begin{lemma} \label {Rych10} (corresponds to Lemma 10 of \cite{Ry}) Let $(\lambda_n)=(\lambda_n)_{n=0}^\infty$ be a sequence of positive numbers such that
$Z=\sum_{n=0}^\infty \lambda_n <+\infty$. Let
\begin{equation}\label{def:D^slambda}
D^s(\delta,(\lambda_n))=\{p\in [0,1]^2:{\rm dist} (G^np, L)\ge \delta \lambda_n , {\rm for}\ n=0,1,2,\dots\}.
\end{equation}
Let $I$ be a segment with direction from unstable cone. Then, there is a constant $A_1$ such that $|I\setminus D^s(\delta,(\lambda_n))|\le A_1\cdot Z\cdot\delta.$
\end{lemma}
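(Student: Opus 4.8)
The plan is to write the complement as a countable union over the time index and to control each piece with the distortion/expansion machinery already in place, above all Proposition \ref{Rych7}. By the definition (\ref{def:D^slambda}), a point $p\in I$ fails to lie in $D^s(\delta,(\lambda_n))$ exactly when $\dis(G^np,L)<\delta\lambda_n$ for some $n\ge 0$, so
\[
I\setminus D^s(\delta,(\lambda_n))=\bigcup_{n=0}^\infty B_n,\qquad B_n=\{p\in I:\dis(G^np,L)<\delta\lambda_n\}.
\]
By subadditivity of Lebesgue measure it then suffices to produce a constant $c$, independent of $n$, $\delta$ and $I$, with $|B_n|\le c\,\delta\,\lambda_n\,\Gamma_n(I)$, after which the whole estimate reduces to summing a series.

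To bound $|B_n|$ I would cut $I$ along $\mathcal P^{(n)}$ into the sub-segments $J\in\mathcal P^{(n)}|I$. On each such $J$ the map $G^n$ is affine (a composition of the piecewise-linear branches of $G$), so $G^n(J)$ is a segment whose direction again lies in the unstable cone $J_+$ by invariance, and the expansion of $G^n$ along $J$ is the constant factor $|G^n(J)|/|J|$. The set $B_n\cap J$ is precisely the $G^n|_J$-preimage of $G^n(J)\cap\{q:\dis(q,L)<\delta\lambda_n\}$, i.e. of the intersection of the segment $G^n(J)$ with the strip of width $2\delta\lambda_n$ about $L$. This is where the geometry enters: each unstable direction $(u,1)$ with $|u|\le\lambda_+$ is uniformly transverse to $L$, whose normal is proportional to $(1-\alpha,\alpha)$, since for $\alpha\in(3/4,1)$ the relevant inner product stays bounded below by a positive constant $c_0$ (indeed $|u(1-\alpha)|\le\theta_0/2<\alpha/2$). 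Hence a straight segment with unstable direction meets the strip in a single subinterval of length at most $2\delta\lambda_n/c_0=:c\,\delta\lambda_n$, and pulling this back by the affine map $G^n|_J$ multiplies the length by $|J|/|G^n(J)|$. Summing over the pieces gives
\[
|B_n|\le\sum_{J\in\mathcal P^{(n)}|I}\frac{|J|}{|G^n(J)|}\,c\,\delta\lambda_n=c\,\delta\lambda_n\,\Gamma_n(I).
\]

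Finally I would invoke Proposition \ref{Rych7}, giving $\Gamma_n(I)\le F(r^n+|I|)$ with $0<r<1$, and sum over $n$:
\[
|I\setminus D^s(\delta,(\lambda_n))|\le c\,\delta\sum_{n=0}^\infty\lambda_n F(r^n+|I|)\le cF\,\delta\Big(\sum_{n=0}^\infty\lambda_n r^n+|I|\sum_{n=0}^\infty\lambda_n\Big).
\]
Since $r<1$ yields $\sum_n\lambda_n r^n\le\sum_n\lambda_n=Z$, and every segment in the unit square satisfies $|I|\le\sqrt2$, the right-hand side is at most $cF(1+\sqrt2)\,Z\,\delta$, so the lemma holds with $A_1=cF(1+\sqrt2)$.

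The decomposition and the closing geometric series are routine; the step carrying the real content is the per-level bound $|B_n|\le c\,\delta\lambda_n\,\Gamma_n(I)$. Its crux is the uniform transversality of the unstable cone $J_+$ to $L$, which converts ``within distance $\delta\lambda_n$ of $L$'' into ``length $\lesssim\delta\lambda_n$ along an unstable segment,'' combined with the constant-expansion behaviour of $G^n$ on elements of $\mathcal P^{(n)}$ that lets the local length ratios assemble exactly into the sum $\Gamma_n(I)$ governed by Proposition \ref{Rych7}. I expect the care to go into justifying the single-crossing estimate uniformly over all $n$ and all admissible branches; notably, the non-invertibility of $G$ plays no role here, since the entire argument tracks only forward images of one fixed unstable segment $I$.
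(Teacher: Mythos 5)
Your proposal is correct and follows essentially the same route as the paper: decompose the exceptional set by the time index $n$, bound the intersection of each unstable image segment $G^n(J)$ with the strip around $L$ by uniform transversality of the cone $J_+$ to $L$, pull back by the constant expansion factor $|J|/|G^n(J)|$, and sum via Proposition \ref{Rych7}. The paper phrases the per-level bound in terms of $|I\cap G^{-n}(C(\delta\lambda_n))|$ rather than your $|B_n|\le c\,\delta\lambda_n\,\Gamma_n(I)$, but the content and the resulting constant $A_1=A_0F(1+\sqrt 2)$ are the same.
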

\begin{proof} We follow closely Rychlik \cite{Ry}.
Let $$C(t)=\{q: \dis(q,L)\le t\},$$ where $t\ge 0$.
Let $p\in I\setminus D^s(\delta,(\lambda_n))$. There exists $n\ge 0$ such that $\dis(G^np,L)<\delta \lambda_n$.
Let $J\in \mathcal P^{(n)}|I$ be the subinterval containing point $p$. Then, $G^n p$ belongs to the interval $G^n J$ such that
$$| G^n J \cap C(\delta \lambda_n)\}|\le A_0 \cdot \delta\lambda_n , $$
for some constant $A_0$ independent of $\delta$ and $n$, as $G^nJ$ has a direction from the expanding cone and thus, the angle between $G^n J$ and line $L$ is bounded away from 0.
Thus, $p\in J\cap G^{-n}(C(\delta \lambda_n))$ and 
$$|J\cap G^{-n}(C(\delta \lambda_n))|\le \frac {A_0 \cdot \delta\lambda_n}{|G^n J|}\cdot |J|.$$
By Proposition \ref{Rych7}, this gives
$$|I\cap G^{-n}(C(\delta \lambda_n))|\le {A_0 \cdot \delta\lambda_n}\cdot F(r^n+|I|)\le A_0 F(1+{\rm diam} ([0,1]^2))\delta\lambda_n .$$
Summing up over all $n$, we obtain $$|I\setminus D^s(\delta,(\lambda_n))|\le A_0F(1+\sqrt{2})\delta Z .$$
The Lemma is proved with $A_1=A_0F(1+\sqrt{2})$.
\end{proof}
\begin{corollary}\label{cor1} For any interval $I$ with the direction from the expanding cone we have 
$$|I\setminus D^s(\delta)|\le A_2\cdot \delta ,$$
where $A_2= A_1\sum_{n=0}^\infty C \lambda_{-}= A_1 C /(1-\lambda_{-})$.
\end{corollary}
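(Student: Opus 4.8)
The plan is to deduce this directly from Lemma \ref{Rych10} by taking the summable sequence $(\lambda_n)$ there to be the explicit geometric majorant of the stable contraction rates $\lambda^s_n$ supplied by Proposition \ref{lambdas}. First I would recall that Proposition \ref{lambdas} furnishes a single constant $C>0$, uniform over all $p\in U^s$, with $|\lambda^s_n(p)|\le C\lambda_-^n$ for every $n$, where $\lambda_-=\theta_0\in(0,1)$. This uniform bound is the only extra input beyond Lemma \ref{Rych10} that the argument requires.

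The key step is a set inclusion. Put $\lambda_n=C\lambda_-^n$. I claim $D^s(\delta,(C\lambda_-^n))\subseteq D^s(\delta)$. Indeed, if $p$ lies in the former set then $\dis(G^np,L)\ge \delta C\lambda_-^n$ for all $n$; combining this with $\lambda^s_n(p)\le C\lambda_-^n$ gives $\dis(G^np,L)\ge \delta C\lambda_-^n\ge \delta\lambda^s_n(p)$, which is precisely the defining condition (\ref{def:D^s}) of $D^s(\delta)$. Passing to complements inside $I$ reverses the inclusion, so $I\setminus D^s(\delta)\subseteq I\setminus D^s(\delta,(C\lambda_-^n))$, whence $|I\setminus D^s(\delta)|\le |I\setminus D^s(\delta,(C\lambda_-^n))|$.

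Finally I would apply Lemma \ref{Rych10} to the sequence $\lambda_n=C\lambda_-^n$. Since $\lambda_-\in(0,1)$, this sequence is summable with $Z=\sum_{n=0}^\infty C\lambda_-^n=C/(1-\lambda_-)$, and the Lemma yields $|I\setminus D^s(\delta,(C\lambda_-^n))|\le A_1 Z\delta$. Chaining the two estimates gives $|I\setminus D^s(\delta)|\le A_1\,\frac{C}{1-\lambda_-}\,\delta$, which is the asserted bound with $A_2=A_1C/(1-\lambda_-)=A_1\sum_{n=0}^\infty C\lambda_-^n$.

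There is no genuine analytic obstacle here: the proof is essentially bookkeeping around Lemma \ref{Rych10}. The two points that deserve a moment of care are the direction of the complement inclusion (it is easy to flip it the wrong way) and the implicit restriction to $U^s$, on which $\lambda^s_n$ is defined; since $[0,1]^2\setminus U^s=\bigcup_{n=0}^\infty G^{-n}(L)$ has Lebesgue measure zero, this restriction does not affect any of the one-dimensional length estimates along $I$.
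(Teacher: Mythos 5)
Your proof is correct and is essentially the paper's own argument: both take $\lambda_n=C\lambda_-^n$ from Proposition \ref{lambdas}, deduce the inclusion $D^s(\delta,(\lambda_n))\subseteq D^s(\delta)$ from $\lambda^s_n(p)\le C\lambda_-^n$, and invoke Lemma \ref{Rych10} with $Z=C/(1-\lambda_-)$. You simply spell out the complement-reversal and the measure-zero caveat about $U^s$ that the paper leaves implicit.
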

\begin{proof} Let $\lambda_n=C\lambda_{-}^n$, $n=0,1,2,\dots$ Since $\lambda^s_n \le C\lambda_{-}^n$ we have $D^s(\delta)\supset D^s(\delta,(\lambda_n))$.
This proves the claim.
\end{proof}

Let $\nu$ denote the normalized Lebesgue measure on $[0,1]^2$.

\begin{corollary} The set $\tilde D^s=\bigcup_{\delta>0} D^s(\delta)$ is of full $\nu$-measure in $[0,1]^2$. Moreover
$$\nu([0,1]^2\setminus D^s(\delta))\le A_2\cdot \delta.$$
\end{corollary}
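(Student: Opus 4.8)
The plan is to derive the two--dimensional bound from the one--dimensional bound of Corollary~\ref{cor1} by Fubini's theorem, foliating the square by segments whose direction lies in the expanding cone. The decisive observation is that the \emph{vertical} direction belongs to $J_+$: a vertical segment has direction $(u,1)$ with $u=0$, and since $\theta_0=\alpha-\sqrt{\alpha^2+2\alpha-2}>0$ for $\alpha\in(3/4,1)$, we have $|2\cdot 0\cdot(1-\alpha)|=0\le\theta_0$, so $0\in J_+$. Hence for every $x\in[0,1]$ the vertical segment $I_x=\{x\}\times[0,1]$ is an interval with direction from the expanding cone, and Corollary~\ref{cor1} applies to it with the same universal constant $A_2$.

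First I would record that $D^s(\delta)$ is a measurable subset of $[0,1]^2$, being defined by the countable family of conditions $\dis(G^np,L)\ge\delta\lambda^s_n(p)$, $n\ge 0$, where $\dis(G^n\cdot,L)$ is continuous and $\lambda^s_n$ is measurable on the full--measure set $U^s$ on which the stable direction $v(\cdot)$ is defined. With measurability in hand, and since $\nu$ is normalized Lebesgue measure while the vertical fibres are parametrized by $x\in[0,1]$, Fubini's theorem gives
\begin{equation*}
\nu([0,1]^2\setminus D^s(\delta))=\int_0^1 |I_x\setminus D^s(\delta)|\,dx .
\end{equation*}
Applying Corollary~\ref{cor1} to each $I_x$ yields $|I_x\setminus D^s(\delta)|\le A_2\delta$, and integrating this constant bound over $x\in[0,1]$ produces exactly $\nu([0,1]^2\setminus D^s(\delta))\le A_2\delta$. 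The fact that the transversal parameter ranges over a set of length one is precisely what yields the clean constant $A_2$ with no extra factor; a general foliation direction in the interior of $J_+$ would work equally well but would introduce a change--of--variables Jacobian.

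For the first assertion I would use monotonicity. Since lowering $\delta$ only relaxes the defining inequalities, $D^s(\delta_1)\subset D^s(\delta_2)$ whenever $\delta_1>\delta_2$; thus $\tilde D^s=\bigcup_{\delta>0}D^s(\delta)=\bigcup_{k\ge 1}D^s(1/k)$ is an increasing union, and consequently
\begin{equation*}
\nu([0,1]^2\setminus\tilde D^s)=\nu\Big(\bigcap_{k\ge 1}\big([0,1]^2\setminus D^s(1/k)\big)\Big)\le\inf_{k\ge 1}\frac{A_2}{k}=0 ,
\end{equation*}
so $\tilde D^s$ has full $\nu$--measure. This part is purely formal; the genuine content of the corollary is the passage from the fibrewise estimate to the planar estimate, and the only real point to verify there is that the chosen foliation direction lies in the expanding cone — which is why I single out the vertical direction rather than an arbitrary one. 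Beyond this cone--membership check and the routine measurability needed to invoke Fubini, I do not expect any serious obstacle.
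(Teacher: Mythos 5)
Your proposal is correct and follows essentially the same route as the paper, whose proof is simply the one-line invocation of Corollary~\ref{cor1} together with Fubini's Theorem; you have merely made explicit the choice of the vertical foliation (legitimate, since $u=0$ lies in $J_+$) and the monotonicity argument for the full-measure claim. No gaps.
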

\begin{proof}
Follows by Corollary \ref{cor1} and Fubini's Theorem.
\end{proof}

\begin{figure}[tbp] 
  \centering
  \includegraphics[bb=0 -1 311 329,width=3.92in,height=4.16in,keepaspectratio]{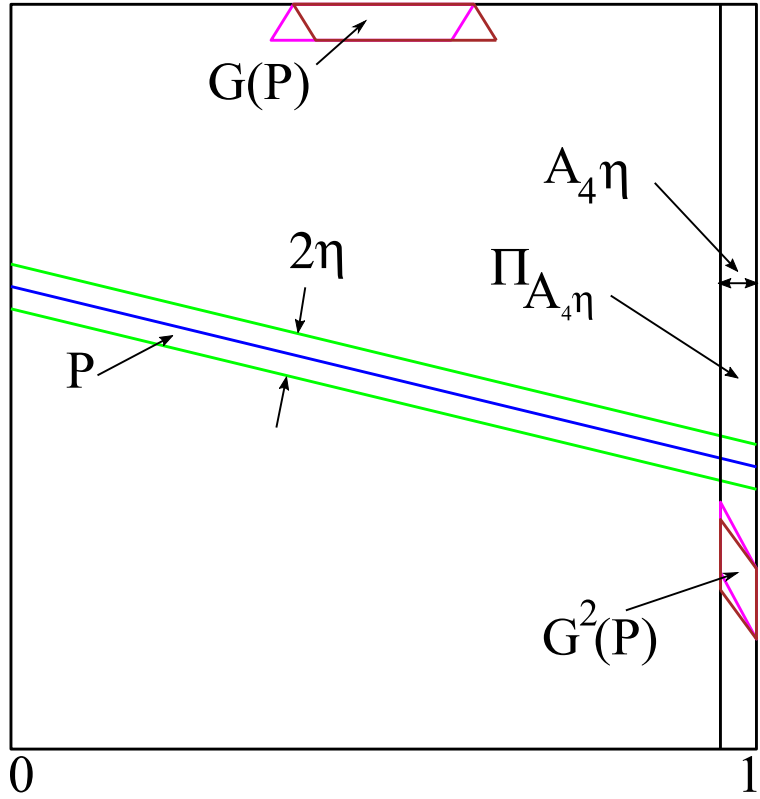}
  \caption{Two images of a neighbourhood of the partition line $L$. Both $G(P)$ and $G^2(P)$ are unions of
two parallelograms.}
  \label{fig:image_partition_line}
\end{figure}

Let us consider the function $1/D(p)$ where $D(p)=|\xi(p)|$. We will prove that it is integrable.

\begin{proposition}(corresponds to  Proposition 8  of \cite{Ry})\label{prop:8} There is a constant $A_3>0$ such that  for an arbitrary $\delta>0$,
$$\nu(\{p\in [0,1]^2: D(p)<\delta\})\le A_3 \delta^2.$$
\end{proposition}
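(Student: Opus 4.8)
The plan is to exploit the fact that $D$ is constant on each element of $\mathcal P^{-}$, so that $\{p:D(p)<\delta\}$ is \emph{exactly} the union of those stable leaves whose length is less than $\delta$. A single application of Lemma \ref{lem:9} already gives the \emph{linear} bound $\nu(\{D<\delta\})\le A_2\delta/2$: if $p$ lay in $D^s(\delta/2)$ then its distance to each endpoint of $\xi(p)$ would be at least $\delta/2$, hence $|\xi(p)|\ge\delta$; thus $\{D<\delta\}\subseteq[0,1]^2\setminus D^s(\delta/2)$, whose measure is at most $A_2\delta/2$ by the Corollary bounding $\nu([0,1]^2\setminus D^s(\delta))$ established above. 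The whole point of Proposition \ref{prop:8} is to upgrade this to a \emph{quadratic} bound, and the extra factor of $\delta$ must come from a genuinely two-dimensional estimate, since a short leaf is short precisely because it is trapped between two nearby cutting curves and is therefore controlled on \emph{both} of its sides, not merely on one.

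Concretely, I would set up the estimate in coordinates adapted to the stable foliation. Writing $d\nu=\rho\,ds\,dt$, where $s$ is arclength along a stable leaf and $t$ is a transverse parameter labelling the leaves, the cone estimates of Lemma \ref{inv_cones} and Lemma \ref{Lemma:kappa} guarantee that the stable and unstable directions make an angle bounded away from $0$, so $\rho$ is bounded above and below by positive constants. Integrating first along each leaf, and using that a leaf contributing to $\{D<\delta\}$ has arclength $<\delta$, yields
$$\nu(\{D<\delta\})\le c_{\max}\,\delta\cdot \bigl|\{t:|\xi_t|<\delta\}\bigr|,$$
so everything reduces to the transverse estimate $\bigl|\{t:|\xi_t|<\delta\}\bigr|\le C\delta$: the set of \emph{leaf labels} whose leaf is shorter than $\delta$ has transverse measure $O(\delta)$.

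To prove this transverse estimate I would return to the two endpoints $e_\pm$ of a short leaf. Each endpoint lies on some preimage $G^{-m}(L)$, and the contraction argument of Lemma \ref{lem:9} converts closeness to an endpoint into closeness of an iterate to $L$: if $\mathrm{dist}(p,e_\pm)<\delta$ with $e_\pm\in G^{-m_\pm}(L)$, then $\mathrm{dist}(G^{m_\pm}p,L)<\delta\,\lambda^s_{m_\pm}(p)\le C\delta\,\lambda_-^{m_\pm}$ by Proposition \ref{lambdas}. Hence $\{D<\delta\}$ is covered by the sets $G^{-m}(C(C\delta\lambda_-^{m}))\cap G^{-n}(C(C\delta\lambda_-^{n}))$ with the two tubes lying on opposite sides of the leaf, where $C(t)$ denotes the $t$-neighbourhood of $L$ as in Lemma \ref{Rych10}. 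On a segment $I$ with direction in the unstable cone, the method of Lemma \ref{Rych10}, which rests on Proposition \ref{Rych7}, bounds the length cut out by a single such tube at level $m$ by a constant times $\delta\lambda_-^{m}$; the gain of the second factor of $\delta$, and the convergence of the double sum over $(m,n)$, are to come from combining the two opposite-side conditions and summing the geometric weights $\lambda_-^{m}$, $\lambda_-^{n}$.

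The main obstacle is precisely this last step: showing that the two one-sided conditions genuinely multiply, i.e.\ that the transverse measure of short leaves is $O(\delta)$ rather than merely $O(1)$. The difficulty is that the two endpoints sit at the ends of the \emph{same} one-dimensional leaf, so their conditions are not independent in any naive Fubini sense; the extra smallness is a dynamical fact, reflecting that at each level the newly created short leaves occupy a transverse measure decaying geometrically in the level. Making this rigorous requires the distortion and localization structure encoded in Proposition \ref{Rych7} together with the uniform expansion rate $\lambda_+<1$ of Proposition \ref{lambdas}, organised according to the level at which each leaf is first cut below length $\delta$. Once the bound $\bigl|\{t:|\xi_t|<\delta\}\bigr|\le C\delta$ is in hand, the displayed inequality gives $\nu(\{D<\delta\})\le A_3\delta^2$ with $A_3=c_{\max}C$, completing the proof.
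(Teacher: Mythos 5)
There is a genuine gap, and you locate it yourself: everything is reduced to the claim that the transverse measure of the short leaves is $O(\delta)$ (equivalently, that the two one-sided conditions ``genuinely multiply''), and that claim is never proved. As you note, the two endpoints of a short leaf lie on the \emph{same} one-dimensional object, so one cannot Fubini the two tube conditions $G^{-m}(C(\cdot))$ and $G^{-n}(C(\cdot))$ against each other on a fixed unstable segment $I$; summing the one-tube bounds of Lemma~\ref{Rych10} over $m$ only reproduces the linear estimate $O(\delta)$ you already obtained from Lemma~\ref{lem:9} and Corollary~\ref{cor1}. Describing the missing step as ``a dynamical fact'' requiring ``the distortion and localization structure'' is a statement of intent, not an argument, so the proof as written does not establish the quadratic bound. (A secondary issue: your reduction $\nu(\{D<\delta\})\le c_{\max}\,\delta\,|\{t:|\xi_t|<\delta\}|$ presupposes a disintegration $d\nu=\rho\,ds\,dt$ with $\rho$ bounded above and below over the measurable partition $\mathcal P^-$; this is essentially Proposition~\ref{prop:Ry9} together with Lemma~\ref{Ry14} and is not free, though it is the lesser problem.)

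The mechanism the paper uses to make the two conditions transverse is worth internalizing, because it is exactly the idea your sketch is missing. If $D(p)<\delta$, then $\dis(G^{n_i}p,L)<\delta\lambda^s_{n_i}(p)$ for two times $n_1<n_2$ (one per trimmed endpoint, by Lemma~\ref{lem:9}). Instead of staying on the original leaf, push forward to time $n=n_1+2$ and set $\eta=\delta\lambda^s_{n_1}(p)$. Since $G$ maps a small neighbourhood of $L$ onto a neighbourhood of $G(L)\subset\{y=1\}$ and then of $G^2(L)\subset\{x=1\}$, the first condition becomes the \emph{geometric} constraint $G^np\in\Pi_{A_4\eta}=\{1-A_4\eta\le x\le 1\}$, a strip of width $O(\eta)$ transverse to the vertical (unstable) direction; the second condition (using $n_2\ge n_1+2$, since $G^{n_1+1}p$ is near $\{y=1\}$ and hence far from $L$) says $G^np\notin D^s(\eta)$, which by Corollary~\ref{cor1} cuts out length at most $A_2\eta$ from each vertical fibre of the strip. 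Now the two smallness conditions live in genuinely transverse directions, Fubini gives $\nu(\Pi_{A_4\eta}\setminus D^s(\eta))\le A_2A_4\eta^2$, and pulling back by $G^{-n}$ (with the factor $(2\,{\rm Jac}^{-1})^n$ for the two-to-one non-invertible map) and using $\eta\le C\delta\lambda_-^{n_1}$ yields a geometric series that converges because $\lambda_-^2/(1-\alpha)<1$ for $\alpha\in(3/4,1)$. Your outline never produces this change of location, which is precisely where the second factor of $\delta$ comes from.
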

\begin{proof}
If $p\in \{p\in [0,1]^2: D(p)<\delta\}$, then $\dis(G^np,L)<\delta\lambda^s_n(p) $ at least for two $n_1<n_2$ since both ends of $\xi(p)$ have to be trimmed (Lemma \ref{lem:9}).
This means that $p$ is less that $\delta$ close to  preimage $G^{-n_1}(L)$ and $\dis(G^{n_1}p,L)<\delta\lambda^s_{n_1}(p)=\eta$. 
 Then, $$G^{{n_1}+2}p\in \Pi_{A_4\eta}=\{(x,y): 1-A_4\eta\le x\le 1, 0\le y\le 1\},$$
for some constant $A_4>0$ independent of $\delta$ and $n_1$. See Figure \ref{fig:image_partition_line}.
Also,  $G^{{n_1}+2}p\in [0,1]^2\setminus D^s(\eta)$ since $\dis(G^{n_2}p,L)<\delta\lambda^s_{n_2}(p)<\eta $, and $G^{{n_1}+1}p$ is far from the line $L$.
Let $n=n_1+2$. We have $G^n p\in \Pi_{A_4\eta}\setminus D^s(\eta)$. Since the vertical direction is in the expanding cone, by Corollary \ref{cor1} and Fubini's Theorem, we have
$$\nu(\Pi_{A_4\eta}\setminus D^s(\eta))\le A_2\cdot A_4\cdot \eta^2 . $$
Thus, $$\nu(\{p\in [0,1]^2: D(p)<\delta\})\le \sum_{n=0}^\infty \nu(G^{-n}(\Pi_{A_4\eta}\setminus D^s(\eta))\le  \sum_{n=0}^\infty  A_2\cdot A_4\cdot \eta^2 (2\cdot {\rm Jac}^{-1}(\alpha))^n, $$
where ${\rm Jac}(\alpha)=2(1-\alpha)$ is the Jacobian of both $G_1$ and $G_2$. We need the multiplier $2$ because $G$ is a 2 to 1 map.(This is different from the Lozi map studied in  \cite{Ry}.)
By Lemma \ref{direc_rate} and Proposition \ref{lambdas} we have
$$\lambda^s_n(p)\le C\lambda_{-}^n,$$
where
$$\lambda_{-}=\alpha-\sqrt{\alpha^2+2\alpha-2}.$$
We have 
$$\nu(\{p\in [0,1]^2: D(p)<\delta\})\le A_2\cdot A_4\cdot C^2 \cdot \delta^2\cdot \sum_{n=0}^\infty \left(\frac {2(\alpha-\sqrt{\alpha^2+2\alpha-2})^2} {2(1-\alpha)} \right)^n.$$
It can be easily proved that for $3/4<\alpha<1$ we have
$\frac {(\alpha-\sqrt{\alpha^2+2\alpha-2})^2} {(1-\alpha)}<1$. Thus, the series converges to some
constant $A(\alpha)$, and setting $A_3=A_2\cdot A_4\cdot C^2 \cdot A(\alpha)$ completes the proof of the proposition.
\end{proof}

\begin{figure}[h] 
  \centering
  \includegraphics[bb=0 -1 671 337,width=3.92in,height=1.97in,keepaspectratio]{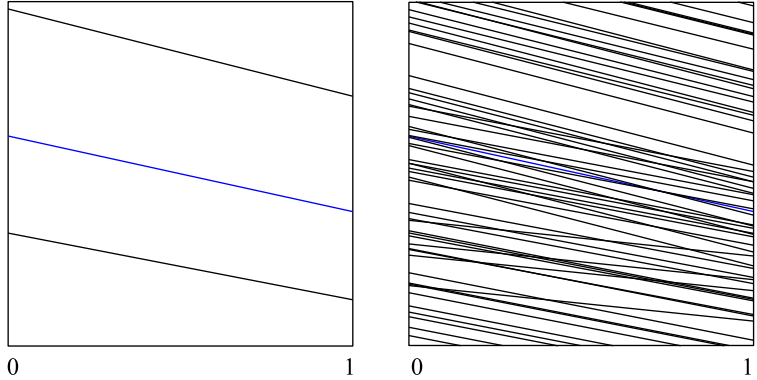}
  \caption{Partitions $\mathcal P^{(2)}$ and $\mathcal P^{(6)}$ for $\alpha=0.82$. }
  \label{fig:partitions}
\end{figure}

\begin{corollary}\label{integrable}(corresponds to Corollary 3 of \cite{Ry}) The function $p\mapsto 1/D^\beta(p)$ is integrable on $[0,1]^2$ for any  $\beta\in[1,2)$.
\end{corollary}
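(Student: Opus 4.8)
The plan is to obtain integrability directly from the quadratic tail bound of Proposition~\ref{prop:8} by means of the layer-cake (distribution function) formula. Since $f=1/D^\beta$ is a nonnegative measurable function on the probability space $([0,1]^2,\nu)$, I would write
\[
\int_{[0,1]^2}\frac{1}{D^\beta}\,d\nu=\int_0^\infty \nu\bigl(\{p: 1/D^\beta(p)>t\}\bigr)\,dt,
\]
and then use the elementary equivalence $1/D^\beta(p)>t \iff D(p)<t^{-1/\beta}$, so that the superlevel set is exactly $\{p:D(p)<t^{-1/\beta}\}$.

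Next I would feed this into Proposition~\ref{prop:8}. Applying it with $\delta=t^{-1/\beta}$ gives
\[
\nu\bigl(\{p:D(p)<t^{-1/\beta}\}\bigr)\le A_3\,t^{-2/\beta}
\]
for every $t>0$. Splitting the $t$-integral at $t=1$, the contribution from $(0,1]$ is at most $1$ because $\nu$ is a probability measure, while the contribution from $(1,\infty)$ is bounded by $A_3\int_1^\infty t^{-2/\beta}\,dt$. This last integral is finite precisely when $2/\beta>1$, i.e. when $\beta<2$, which holds throughout $\beta\in[1,2)$; hence $\int_{[0,1]^2} 1/D^\beta\,d\nu<\infty$.

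There is essentially no serious obstacle here: the content is entirely carried by Proposition~\ref{prop:8}, and the present corollary is the routine passage from a tail estimate to an integrability statement. The only point worth flagging is why the exponent range is $[1,2)$ rather than something larger. The quadratic exponent $2$ in Proposition~\ref{prop:8} originates from the fact that \emph{both} endpoints of $\xi(p)$ must be trimmed, forcing $\dis(G^np,L)$ to be small for two distinct indices $n_1<n_2$, i.e. two independent small-probability conditions, and it is exactly this quadratic decay that makes $t^{-2/\beta}$ integrable up to, but not including, $\beta=2$. At $\beta=2$ one has $2/\beta=1$ and the tail integral diverges, so the bound at our disposal is sharp for this method and the restriction $\beta<2$ cannot be removed without a stronger estimate on $\nu(\{D<\delta\})$.
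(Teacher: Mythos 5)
Your proof is correct and follows essentially the same route as the paper: the layer-cake identity $E(X)=\int_0^\infty P(X>t)\,dt$, the equivalence $D^{-\beta}>t\iff D<t^{-1/\beta}$, the tail bound from Proposition~\ref{prop:8}, and the split of the integral at $t=1$ all appear verbatim in the paper's argument. No gaps; your closing remark on why $\beta<2$ is sharp for this method is a correct observation beyond what the paper states.
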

\begin{proof}
We will use the following identity for positive random variables
\be {E(X)=\int_0^\infty P(X>t)\, dt}
which can be found, e.g., in \cite{Bil}, page 275.
We have
\bes {&\int (1/D)^\beta d\nu=\int_0^\infty \nu(\{D^{-\beta}>\gamma\}) d\gamma\le 1+\int_1^\infty \nu(\{D^{-\beta}>\gamma\}) d\gamma\\
&\le 1+\int_1^\infty \nu(\{D<\gamma^{-1/\beta}\}) d\gamma\le 1+\int_1^\infty A_3\gamma^{-2/\beta} d\gamma
<+\infty}
\end{proof}

In the following proposition we will discuss the family of conditional measures $\{\nu_C\}_{C\in \mathcal P^{-}}$ of measure $\nu$ on elements of the partition $\mathcal P^{-}$.
The theory of conditional measures can be reviewed by referring to \cite{Sim} or \cite{Mah}.
Let $\{\ell_C\}_{C\in \mathcal P^{-}}$ be the family of one-dimensional Lebesgue measures on the elements of $\mathcal P^{-}$.

\begin{proposition}\label{prop:Ry9} (corresponds to Proposition 9 of \cite{Ry}) For almost every $C\in \mathcal P^{-}$, measure $\nu_C$ is absolutely continuous with respect to $\ell_C$ and 
the Radon-Nikodym derivative $\frac {d\nu_C}{d\ell_C}$ is constant on $C$, equal to $1/|C|$.
\end{proposition}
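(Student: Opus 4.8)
The plan is to realize each conditional measure $\nu_C$ as a weak limit of normalized Lebesgue measures on the shrinking cells of the finite partitions $\mathcal P^{(n)}$, and then to exploit the piecewise linearity of $G$ to show that the resulting one–dimensional density cannot vary along $C$. Since $\mathcal P^{(n)}\nearrow \mathcal P^{-}$, the theory of conditional measures (martingale convergence) identifies, for $\nu$–almost every $p$, the conditional measure $\nu_{\xi(p)}$ with $\lim_n \nu(\,\cdot\mid \mathcal P^{(n)})$, whose $n$–th term is the normalized restriction of two–dimensional Lebesgue measure to the cell $B_n=B_n(p)\in\mathcal P^{(n)}$ containing $p$. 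Each $B_n$ is a convex polygon bounded by pieces of $L,G^{-1}(L),\dots,G^{-(n-1)}(L)$, and it is a thin strip collapsing onto the stable segment $C=\xi(p)$. Disintegrating Lebesgue measure on $B_n$ transverse to the stable direction, the induced density along $C$ at a point $y$ is, up to a factor tending to $1$, proportional to the transverse chord–width $w_n(y)$ of $B_n$ at $y$. Thus the proposition reduces to two facts: that $|C|>0$ for a.e. $C$, and that $w_n(y)/w_n(y')\to 1$ for every $y,y'\in C$. The first is exactly Proposition \ref{prop:8} together with Corollary \ref{integrable}, which guarantee $D(p)=|\xi(p)|>0$ almost everywhere.

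For the width ratios I would use the refinement relation $\mathcal P^{(n+1)}=\mathcal P\vee G^{-1}(\mathcal P^{(n)})$, which gives $B_{n+1}(p)=A\cap G^{-1}\!\big(B_n(Gp)\big)$, where $A\in\mathcal P$ is the cell containing $C$. For $y$ in the interior of $C$ (hence away from $L$, by Lemma \ref{lem:9}) the constraint $A$ is transversally inactive, and since $G$ is affine on $A$ with the constant derivative $A_\pm$, the transverse expansion factor of $G$ is the same at $y$ and $y'$. Hence it cancels in the ratio and
\[
\frac{w_{n+1}(y)}{w_{n+1}(y')}=\frac{w_n(Gy)}{w_n(Gy')}.
\]
Iterating $n$ times collapses the left side to $W(G^ny)/W(G^ny')$, where $W(q)$ is the chord–width of the unit square (more precisely, of the relevant cell of $\mathcal P$) at $q$ transverse to the common stable leaf through $G^ny$ and $G^ny'$. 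By the uniform contraction along stable directions (Proposition \ref{lambdas}) we have $\mathrm{dist}(G^ny,G^ny')\to 0$, and since $W$ is bounded away from $0$ and $\infty$ and continuous off a negligible set of configurations, the ratio tends to $1$. Equivalently, this is formula (\ref{rhon}) with $g_S\equiv 1/(2(1-\alpha))$ globally constant and $\lambda$ constant along each leaf, so that every factor in the product is $1$ and Proposition \ref{prop:Ry3} applies. Consequently $\nu_C$ is uniform on $C$; normalizing, $d\nu_C/d\ell_C\equiv 1/|C|$, which in particular yields $\nu_C\ll\ell_C$.

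The step I expect to be the crux is the cancellation that produces the constant density, because it is where the geometry is genuinely delicate. The stable direction field $v(p)$ is only measurable (a continued fraction in the itinerary), so one cannot perform a smooth foliated change of variables: for a $C^1$ foliation by straight segments the transverse Jacobian along a leaf is affine in the arclength, $a+b\,s$ with $b=\vec d\wedge\vec d'$, and the density would generically fail to be constant. What rescues exact constancy here is that $G$ is piecewise affine with globally constant Jacobian $2(1-\alpha)$: the transverse expansion is constant along each leaf and cancels at every step of the recursion, leaving only the harmless square–width ratio. I would also note that non–invertibility causes no trouble for this argument, since it proceeds by forward iteration along the well–defined orbit of $p$ while the conditional measures live on the preimage partition $\mathcal P^{-}$; the two–to–one nature of $G$ entered only in the measure estimate of Proposition \ref{prop:8}, already in hand.
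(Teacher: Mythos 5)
Your overall strategy --- realize $\nu_C$ as a limit of normalized Lebesgue measures on the shrinking convex cells $B_n\in\mathcal P^{(n)}$, reduce the density to transverse chord widths, and use the constancy of $DG$ on each cell to cancel the transverse expansion in the ratio $w_{n+1}(y)/w_{n+1}(y')=w_n(Gy)/w_n(Gy')$ --- is a legitimate variant of the paper's argument, and the cancellation mechanism you identify (piecewise affine map, globally constant Jacobian, stable direction constant on each leaf) is exactly the reason the paper's infinite product in Proposition \ref{prop:Ry3}(b) collapses to a constant. But the proof has a genuine gap at the decisive step, the claim that the iterated ratio $W(G^ny)/W(G^ny')$ tends to $1$ ``since $W$ is bounded away from $0$ and $\infty$.'' This is false: the transverse chord width of a cell of $\mathcal P$ (and, after the constraints you dismiss as inactive do act --- note that $G(L)$ lies on the edge $y=1$, a \emph{stable}-cone direction, so it cuts chords transversally whenever the orbit approaches $L$ --- the chord width of the polygon $G^n(B_n(p))$) vanishes at the ends of the stable chords, and the orbit $G^ny$ approaches those ends. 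Knowing only $\mathrm{dist}(G^ny,G^ny')\le C\lambda_-^n\to 0$ gives nothing, because the distance from $G^ny'$ to the zero set of $W$ can decay at the same rate $\lambda_-^n$, leaving the ratio bounded but not convergent to $1$. What is needed is the quantitative recurrence estimate of Lemma \ref{Rych10} applied with the summable sequence $\lambda_n=\lambda^n$ for some $\lambda\in(\lambda_-,1)$: for a.e.\ $p$ there is $\delta>0$ with $\mathrm{dist}(G^np,L)\ge\delta\lambda^n$, hence (via Lemma \ref{lem:9}) the distance to the ends of the chord decays strictly slower than $\lambda_-^n$, and then the concavity of the chord function along the leaf yields $W(G^ny)/W(G^ny')\le 1+C\lambda_-^n/(\delta\lambda^n)\to 1$. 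This pair of ingredients --- concavity plus Lemma \ref{Rych10} with a strictly larger decay rate --- is precisely how the paper closes the argument, and your proposal omits both.

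Two smaller points. First, your closing appeal to Proposition \ref{prop:Ry3} is circular as written: that proposition is an equivalence, and to extract anything from the fact that every factor of the product in (b) equals $1$ you must first verify hypothesis (a), $\rho(G^ny)/\rho(G^ny')\to 1$, which is exactly the hard step above; the paper supplies the needed a priori regularity of $\rho$ (concavity on each leaf) by extracting a Helly subsequential limit of the concave marginal densities $\rho_n$ of the convex cells, a step your proposal has no substitute for. Second, your reduction of the width recursion all the way down to the chord width of a cell of $\mathcal P$ is only valid for the first $cn$ steps of the iteration (where the $O(\lambda_+^{n-k})$ cell width is dominated by the $\delta\lambda_-^{k}$ distance to $L$); for the last portion the constraints are potentially active and the correct object is the chord function of $G^n(B_n(p))$, which must then be handled by the concavity argument in any case.
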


\begin{figure}[h] 
  \centering
  \includegraphics[bb=0 -1 592 156,width=3.92in,height=1.04in,keepaspectratio]{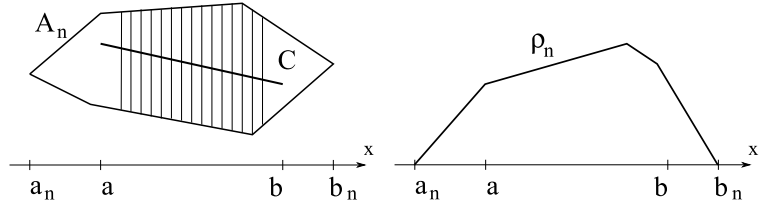}
  \caption{A polygon $A_n$ of the partition $\mathcal P^n$ and the density $\rho_n$.}
  \label{fig:Polygon}
\end{figure}

\begin{proof} We follow closely \cite{Ry}.  Let $A_n$ be the polygon of the partition $\mathcal P^n$, $n\ge 1$ containing $C\in \mathcal P^-$. See Figure \ref{fig:Polygon}.
Since $A_n$ is convex, the projection of the measure $\frac 1{\nu(A_n)} v_{|A_n}$ onto the $x$-axis is a measure absolutely continuous with respect to Lebesgue measure with density
$\rho_n$ which is positive on some interval $(a_n,b_n)$ and zero outside of this interval. Since $\rho_n(t)$ is proportional to the length of the intersection of the vertical line $x=t$
with the polygon $A_n$ the density $\rho_n$ is concave on $(a_n,b_n)$. We have $(a_{n+1},b_{n+1})\subset (a_n,b_n)$ and $a_n\to a$, $b_n\to b$ where $a$ and $b$ are the end points of the projection 
of $C$ onto the $x$-axis. Since $\rho_n(a_n)=\rho_n(b_n)=0$, $\int_{a_n}^{b_n}\rho_n=1$, and $\rho_n$ are concave the family $\{\rho_n\}_{n\ge 1}$ is uniformly bounded by $2/(b-a)$.
Since they are concave their variations are also uniformly bounded by $4/(b-a)$. By Helly's theorem (\cite{Bil}),  there exists a subsequence $\rho _{n_k}$ convergent to some density $\rho$ almost everywhere.
$\rho$ is concave as a limit of concave functions.
Projecting $\rho$ onto $C$ we obtain $\frac{d\nu_C}{d\ell_C}$ which is also concave. We will denote it again by $\rho$.

\begin{figure}[h] 
  \centering
  \includegraphics[bb=0 -1 348 239,width=3.92in,height=2.7in,keepaspectratio]{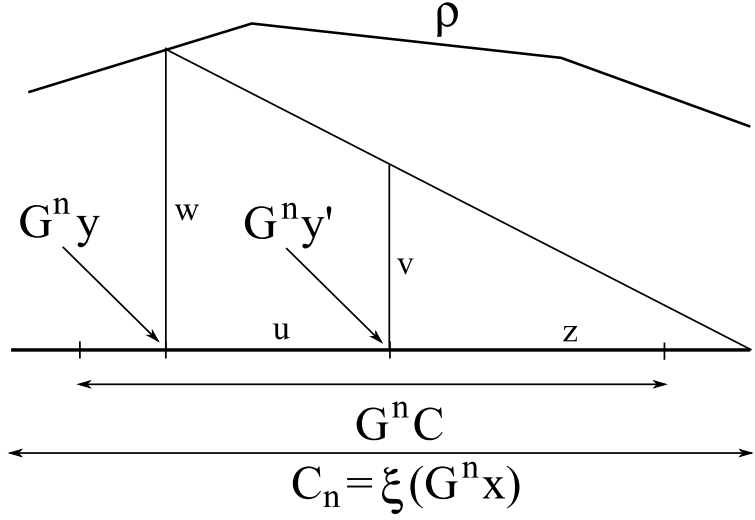}
  \caption{Using the concavity of $\rho_{|\xi(G^nx)}$.}
  \label{fig:concavity}
\end{figure}

To use Proposition \ref{prop:Ry3} we will prove that for almost every $x\in [0,1]^2$ and almost every $y,y'\in C(x)=C$, we have 
\be{\label{lim1}\lim_{n\to\infty}\frac {\rho(G^ny)}{\rho(G^ny')}=1.}
By Lemma \ref{Rych10} we assume that $\dis(G^nC,L)>\delta \lambda^n$, where $\delta>0$ and $\lambda\in(\lambda_{-},1)$. Now, we use the concavity of $\rho_{|\xi(G^nx)}$. See Figure \ref{fig:concavity}.
From the triangles in Figure  \ref{fig:concavity} we have $\frac w v=\frac{u+z} z= 1+\frac u z $. Thus,
$$\frac{\rho(G^ny)}{\rho(G^ny')}\le \frac w v= 1+\frac u z\le 1+\frac{\dis(G^n y,G^n y')}{\dis(G^n C, \partial(\xi(G^nx)))}\le 1+\frac {C\lambda_{-}^n}{\delta\lambda^n},$$
which goes to 1 as $n\to\infty$. Thus, $\limsup_{n\to\infty}\frac {\rho(G^ny)}{\rho(G^ny')}\le 1$. By symmetry, we obtain (\ref{lim1}).
By Proposition \ref{prop:Ry3} we have 
$$\frac{\rho(y)}{\rho(y')}=\prod_{k=0}^{\infty}\frac {g_G(G^ky)}{g_G(G^ky')}\frac{\lambda(G^ky)}{\lambda(G^ky')}.$$
Since the Jacobian of $G$ is constant and $G$ is piecewise linear and in particular linear on every $C\in\xi$, the right hand side of the above formula is constant.
\end{proof}


\section{Applying the Abstract Theorems to Transformations $G_\alpha$.}

We will use the notation introduced in Section \ref{Abstract}. Let $Y=[0,1]^2$, $S=G$ and $\nu$ be Lebesgue measure on $[0,1]^2$. 
The map $T$ is the factor map induced by $G$ on the space $X=[0,1]^2/\mathcal P^{-}$.

By formula (\ref{gTgS}) we have
$$g_T(x)=g_G(y)\frac{d((G_{|A})^{-1}_* \nu_{C(Tx)})}{d\nu_{C(x)} }(y),$$
for almost every $x$ and almost every $y$, where $A$ is an element of the partition $\mathcal P$.

\begin{lemma} We can rewrite $g_T$ as follows:   
\be{\label{g_T formula} g_T=\frac 1{{\rm Jac}_G}\lambda^s\frac D{D\circ T},}
 where $\lambda^s$ is defined in Lemma  \ref{direc_rate} and $D(x)=|C(x)|$.
\end{lemma}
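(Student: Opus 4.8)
The plan is to start directly from the identity furnished by Proposition \ref{prop:Ry2},
$$g_T(x)=g_G(y)\,\frac{d((G_{|A})^{-1}_* \nu_{C(Tx)})}{d\nu_{C(x)}}(y),$$
valid for almost every $x$ and $\nu_{C(x)}$-almost every $y\in C(x)$, and to evaluate each of the three factors on the right separately. The first factor is immediate: since $G$ is piecewise linear with constant Jacobian on each of the two branches, $g_G$ is the reciprocal of that Jacobian, so $g_G\equiv 1/{\rm Jac}_G$. The remaining work is to identify the Radon--Nikodym factor with $\lambda^s\cdot D/(D\circ T)$.

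First I would replace the conditional measures $\nu_{C(x)}$ and $\nu_{C(Tx)}$ by one-dimensional Lebesgue measures via Proposition \ref{prop:Ry9}, which gives $\nu_C=(1/|C|)\,\ell_C$. Writing $D(x)=|C(x)|$ we then have $\nu_{C(x)}=\frac{1}{D(x)}\ell_{C(x)}$ and $\nu_{C(Tx)}=\frac{1}{D(Tx)}\ell_{C(Tx)}$. Substituting these into the pushforward, and using that $1/D(Tx)$ is constant along $C(Tx)$ (hence pulls out of the pushforward) while $1/D(x)$ is constant along $C(x)$, the Radon--Nikodym factor splits as
$$\frac{d((G_{|A})^{-1}_* \nu_{C(Tx)})}{d\nu_{C(x)}}(y)=\frac{D(x)}{D(Tx)}\,\frac{d((G_{|A})^{-1}_* \ell_{C(Tx)})}{d\ell_{C(x)}}(y).$$

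The heart of the argument---and the step I expect to be the main obstacle---is to show that the remaining Lebesgue Radon--Nikodym derivative equals the stable contraction rate $\lambda^s(y)$. Here one uses that the elements $C\in\mathcal P^{-}$ are segments with direction in the stable cone, that $G_{|A}$ maps $C(x)$ into $C(Tx)$, and that by Lemma \ref{direc_rate} the restriction of $G$ to such a segment scales one-dimensional length by exactly $\lambda^s$. Indeed, for $E\subseteq C(x)$ the pushforward measures $E$ by $\ell_{C(Tx)}(G(E))=\lambda^s\cdot\ell_{C(x)}(E)$, so its density with respect to $\ell_{C(x)}$ is $\lambda^s$. A subtlety to check carefully is that $\lambda^s$ is genuinely constant along each leaf $C$: this holds because $v(\cdot)$, and hence $h_1(\cdot)=1/\sqrt{v(\cdot)^2+1}$, depends only on the forward itinerary, which is shared by all points of a given $C\in\mathcal P^{-}$, so $\lambda^s$ descends to a well-defined function on $X$. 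Combining the three factors then yields $g_T=\frac{1}{{\rm Jac}_G}\,\lambda^s\,\frac{D}{D\circ T}$, as claimed.
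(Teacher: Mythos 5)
Your proposal is correct and follows essentially the same route as the paper: the paper's proof is exactly the three-factor chain-rule decomposition of the Radon--Nikodym derivative into $d\nu_{C(Tx)}/d\ell_{C(Tx)}$ pulled back (giving $1/(D\circ T)$), the Lebesgue-to-Lebesgue factor (giving $\lambda^s$), and $d\ell_{C(x)}/d\nu_{C(x)}$ (giving $D$), all resolved via Proposition \ref{prop:Ry9}. You merely spell out the details (including the constancy of $\lambda^s$ on leaves) that the paper leaves implicit.
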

\begin{proof} 
We can write
$$\frac{d((G_{|A})^{-1}_* \nu_{C(Tx)})}{d\nu_{C(x)} }=\frac {d((G_{|A})^{-1}_* \nu_{C(Tx)})}{d((G_{|A})^{-1}_* \ell_{C(Tx)})}\frac{d((G_{|A})^{-1}_* \ell_{C(Tx)})}{d\ell_{C(x)} }\frac {d\ell_{C(x)} }{d\nu_{C(x)} }.$$
In view of Proposition \ref{prop:Ry9} this gives the required formula for $g_T$.
\end{proof}
Since $g_T$ given by formula (\ref{g_T formula}) is very discontinuous we will replace it by considering instead of Lebesgue measure on $[0,1]^2$ an equivalent measure 
$\nu=\frac 1D \bar \nu$, where $\bar \nu $ is the Lebesgue measure. Then, we define $m=\frac 1d \bar m$, where $\bar m$ is the factor of the Lebesgue measure on $X=[0,1]^2/\mathcal P^{-}$.

\begin{proposition} (Rychlik \cite{Ry}, Proposition 10) If we apply the results of Section \ref{Abstract} to the measure $\nu=\frac 1D \bar \nu$, then 
\be{g_T=\frac 1{{\rm Jac}_G}\lambda^s .}
\end{proposition}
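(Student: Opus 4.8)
The plan is to re-derive $g_T$ directly from the way it transforms under a change of the reference measure, and to observe that rescaling by $1/D$ is designed precisely to absorb the discontinuous factor $D/(D\circ T)$ in (\ref{g_T formula}). First I would record the elementary transformation rule: if $\mu$ is a nonsingular reference measure for a map $R$ equipped with a regular partition and $\phi>0$, then, computing with the equivalent measure $\phi\mu$, the defining formula (\ref{def g_T}) yields
\[
g_R^{(\phi\mu)} = \frac{\phi}{\phi\circ R}\,g_R^{(\mu)} .
\]
Indeed, with respect to $\phi\mu$ the pushed-forward measure $R_*(\chi_A\,\phi\mu)$ carries the source value of $\phi$ while the reference $\phi\mu$ carries the image value, and along the branch through $x$ the two contributions combine into $\phi(x)/\phi(Rx)$.

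Before using this I would check that the new measures are legitimate for the reduction of Section \ref{Abstract}. Since $D(p)=|\xi(p)|$ is constant on every element $C\in\mathcal P^{-}$, the density $1/D$ is $\mathcal P^{-}$-measurable and, by Corollary \ref{integrable}, integrable; hence $\nu=\frac1D\bar\nu$ is a finite measure equivalent to $\bar\nu$, and the abstract machinery applies to it. Fibre-constancy of $1/D$ also gives $\pi_*(\frac1D\bar\nu)=\frac1D\bar m=m$, so the compatibility $m=\pi_*\nu$ persists, and the conditional measures $\nu_C$ are left unchanged (a factor constant on fibres only reweights the base measure). In particular Proposition \ref{prop:Ry9} and the computation leading to (\ref{g_T formula}) remain valid verbatim.

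I would then apply the transformation rule on $X$ with $\phi=1/D$. Using that $D$ is constant on fibres, so that $D\circ G$ descends to $D\circ T$ along the relevant branch, the factors cancel:
\[
g_T^{(m)} = \frac{1/D}{(1/D)\circ T}\,g_T^{(\bar m)} = \frac{D\circ T}{D}\cdot\frac{1}{{\rm Jac}_G}\,\lambda^s\,\frac{D}{D\circ T} = \frac{1}{{\rm Jac}_G}\,\lambda^s ,
\]
which is the asserted formula. As a consistency check one may instead re-run Proposition \ref{prop:Ry2}: the conditional factor $\lambda^s\,D/(D\circ T)$ is unchanged, while $g_S$ acquires the factor $(D\circ G)/D=(D\circ T)/D$ by the same rule, producing the identical cancellation.

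The computation is short; the point that requires care is the bookkeeping between $Y$ and $X$. One must use the \emph{same} fibre-constant function $1/D$ to rescale both $\bar\nu$ on $Y$ and $\bar m$ on $X$, so that $m=\pi_*\nu$ continues to hold and $D\circ G$ on $Y$ genuinely corresponds to $D\circ T$ on $X$; it is exactly this fibre-constancy, together with $D>0$ and the integrability of $1/D$, that makes the substitution both admissible and effective.
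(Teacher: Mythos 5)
Your proposal is correct and follows essentially the same route as the paper: the paper's proof is exactly the inline computation of your change-of-density rule $g_T^{(\phi\bar m)}=\frac{\phi}{\phi\circ T}\,g_T^{(\bar m)}$ with $\phi=1/D$, followed by cancellation against the factor $D/(D\circ T)$ in (\ref{g_T formula}). Your additional checks (fibre-constancy of $1/D$, integrability via Corollary \ref{integrable}, and persistence of $m=\pi_*\nu$) are sound and merely make explicit what the paper leaves implicit.
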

\begin{proof} Let $A\in \mathcal P^{-}$. By the definition of $g_T$  (\ref{def g_T}), we have
\bes{g_T&=\frac{d(T_*(\chi_A\cdot m))}{dm}\circ T=\frac{d(T_*(\chi_A\cdot\frac 1D\cdot \bar m))}{d(\frac 1D \cdot\bar m)}\circ T\\
&=\frac{\frac 1D \circ (T_{|A})^{-1}\cdot dT_*(\chi_A\bar m)}{\frac 1D d\bar m}\circ T\\
&=\frac{\frac{1}{D}}{\frac 1D\circ T}\cdot \frac 1{{\rm Jac}_G}\lambda^s\frac D{D\circ T}=\frac 1{{\rm Jac}_G}\lambda^s.
}
\end{proof}

We will now verify assumptions (I)--(IV).

\begin{lemma}\label{lemma:Ry12} (Rychlik \cite{Ry}, Lemma 12) Condition (I) is satisfied.
\end{lemma}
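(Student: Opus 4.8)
The plan is to verify the distortion condition (I), which asserts the existence of a constant $d>0$ such that for all $n\ge 1$ and all $B\in\beta^n$ we have $\sup_B g_n\le d\cdot\inf_B g_n$, where $g_n=g_{T^n}$. Since the previous proposition gives the clean formula $g_T=\frac{1}{\mathrm{Jac}_G}\lambda^s$ under the reweighted measure $\nu=\frac1D\bar\nu$, my first step is to obtain the corresponding multiplicative formula for $g_n$. Because $\mathrm{Jac}_G=2(1-\alpha)$ is \emph{constant}, the Jacobian factor contributes exactly $(\mathrm{Jac}_G)^{-n}$ to $g_n$ and is identical at every pair of points, so it cancels entirely from the ratio $\sup_B g_n/\inf_B g_n$. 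Thus the whole problem reduces to controlling the product of the stable contraction rates: writing $g_n(x)=(\mathrm{Jac}_G)^{-n}\prod_{k=0}^{n-1}\lambda^s(T^k x)$, I must bound
\be{\frac{\sup_B g_n}{\inf_B g_n}=\frac{\sup_B \prod_{k=0}^{n-1}\lambda^s(T^k x)}{\inf_B \prod_{k=0}^{n-1}\lambda^s(T^k x)}.}

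Next I would pass to the lift, working with $S=G$ on $Y=[0,1]^2$ rather than the factor $T$, using Lemma~\ref{direc_rate}, which gives $\lambda^s(p)=|v(p)|\,h_1(p)/h_1(G(p))$. The point of this representation is that the boundary terms $h_1$ telescope: in the product $\prod_{k=0}^{n-1}\lambda^s(G^k p)$ all the intermediate $h_1$ factors cancel, leaving $\left(\prod_{k=0}^{n-1}|v(G^k p)|\right)h_1(p)/h_1(G^n p)$. Since $h_1$ is bounded above and below by the constants $c_1,c_2$ from Proposition~\ref{lambdas}, the telescoped boundary ratio contributes a factor at most $(c_2/c_1)^2$ to the distortion, uniformly in $n$. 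So the genuine work is to bound the ratio of the products $\prod_{k=0}^{n-1}|v(G^k y)|$ over $\prod_{k=0}^{n-1}|v(G^k y')|$ for $y,y'$ in the same element $B$ of the refined partition.

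The heart of the argument, and the step I expect to be the main obstacle, is this last bound on the product of the $|v(G^k\cdot)|$ factors. The key observation is that two points in the same element $B\in\beta^n$ follow the \emph{same} symbolic itinerary through $A_1,A_2$ for the first $n$ steps, so the sign sequence $(\eps_0,\dots,\eps_{n-1})$ defining $v$ via the continued fraction is identical for both; moreover their images $G^k y, G^k y'$ lie on a common segment with direction in the stable cone $J_-$. I would estimate $\big|\log|v(G^k y)|-\log|v(G^k y')|\big|$ by using that $v$ is determined by the past-and-future itinerary through the contracting continued-fraction map $T_\pm$, whose derivative is bounded by $\kappa<1$ (Lemma~\ref{Lemma:kappa}). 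Because the tail of the itinerary contracts geometrically at rate $\kappa$ and the distance $\mathrm{dist}(G^k y, G^k y')$ along the stable direction is itself controlled by $\lambda^s_k\le C\lambda_-^k$ (Proposition~\ref{lambdas}), the per-step discrepancies $\sum_k \big|\log|v(G^k y)|-\log|v(G^k y')|\big|$ form a geometrically summable series bounded independently of $n$. Exponentiating this uniform bound yields the constant $d$, completing condition (I). The subtlety specific to these non-invertible maps is that $v(p)$ depends on the full forward itinerary, not just on finitely many symbols, so I must be careful that the continued-fraction Lipschitz estimate is applied to the genuinely shared itinerary tail and that the geometric contraction $\kappa$ dominates any growth introduced by the non-invertibility; this is precisely where Lemmas~\ref{inv_cones} and \ref{Lemma:kappa} do the essential work.
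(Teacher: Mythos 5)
Your proposal follows the paper's proof essentially verbatim: cancel the constant Jacobian, telescope the $h_1$ factors to reduce to the ratio of the products $\prod_{k=0}^{n-1}|v(G^kx_i)|$, and control $\sum_k\bigl|\log|v(G^kx_1)|-\log|v(G^kx_2)|\bigr|$ by the geometric bound $|v(G^kx_1)-v(G^kx_2)|\le\kappa^{n-k}|J_-|$ coming from applying the $\kappa$-contracting direction maps along the $n-k$ shared itinerary symbols, then exponentiate. The one inessential slip is your appeal to $\mathrm{dist}(G^ky,G^ky')\le C\lambda_-^k$: the decay in the paper comes from the number of remaining shared symbols (rate $\kappa^{n-k}$, using also that $|v|$ is bounded away from zero on the invariant cone), not from spatial proximity of the fibers, but this does not affect the argument.
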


\begin{proof} Let $n\ge 1$ and $x_1, x_2\in B\in \beta^{(n)}$. We can treat $x_1,x_2$ as elements of
the Lebesgue  space $X$ and also as points in $[0,1]^2$ or elements of $\mathcal P^-$.
The points $G^k x_1$ and $G^k x_2$ are on the same side of the partition line $L$ for $k=1,2,\dots,n-1$. Since $\Jac_G$ is constant, we need only to find a universal constant $d$
such that
$$\frac 1d\le\frac {\lambda^s_n(x_1)}{\lambda^s_n(x_2)}\le d \, .$$
By Lemma \ref{direc_rate} we have
$\lambda^s(p)=|v(p)|\frac{h_1(p)}{h_1(G(p))}$, so 
$$\lambda^s_n(p)=|v(p)|\cdot |v(Gp)|\cdot\dots\cdot|v(G^{n-1}p)|\frac{h_1(p)}{h_1(G^n(p))}.$$
By Lemma \ref{Lemma:kappa} we have  $|v(Gp)-v(Gp')|\le \kappa|v(p)-v(p')|$, where $0<\kappa<1$.
Thus, $|v(G^kx_1)-v(G^kx_2)|\le \kappa^{n-k}|J_{-}|$ , for $k=1,2,\dots,n-1$.
Thus, there exists a constant $d_0$ such that
$$\exp\left(-d_0\kappa^{n-k}\right) \le \frac{\lambda^s(G^k x_1)}{\lambda^s(G^k x_2)}\le \exp\left(d_0\kappa^{n-k}\right) .$$
Then, for some constant $d_1$ (we have to include the fractions ${h_1(x_1)}/{h_1(G^n(x_1))}$ and ${h_1(x_2)}/{h_1(G^n(x_2))}$),  we obtain 
$$\exp\left(-d_1\sum_{k=0}^{n-1}\kappa^{n-k}\right) \le \frac{\lambda^s_n( x_1)}{\lambda^s_n( x_2)}\le \exp\left(d_1\sum_{k=0}^{n-1}\kappa^{n-k}\right) .$$
Letting $d=\exp(d_1/(1-\kappa))$, completes the proof.
\end{proof}

\begin{lemma} \label{lemma:Ry13} (Rychlik \cite{Ry}, Lemma 13) Conditions (II) and (IV) are satisfied for some iteration $T^N$, $N\ge 1$. 
\end{lemma}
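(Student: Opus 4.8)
The goal is to show that, after passing to a sufficiently high iterate $T^N$, both the localization condition (II) and the expanding condition (IV) hold for the reduced Frobenius--Perron data $g_T=\frac{1}{{\rm Jac}_G}\lambda^s$. The plan is to first treat (IV), which is the simpler of the two, and then leverage the same quantitative control to obtain (II). For (IV), recall from Proposition \ref{lambdas} that $|\lambda^s_n(p)|\le C\lambda_-^n$ with $\lambda_-=\theta_0\in(0,1)$, while ${\rm Jac}_G=2(1-\alpha)$ is constant. Thus $g_{T^N}=\frac{1}{({\rm Jac}_G)^N}\lambda^s_N$, and I would estimate
\[
\sup_X g_{T^N}\le \frac{C\,\lambda_-^N}{(2(1-\alpha))^N}=C\left(\frac{\theta_0}{2(1-\alpha)}\right)^N=C\,\lambda_+^N .
\]
Since $\lambda_+=\frac{\theta_0}{2(1-\alpha)}\in(0,1)$ by Proposition \ref{lambdas}, choosing $N$ large enough forces $C\lambda_+^N<1$, which is exactly condition (IV) for $T^N$ with $r=C\lambda_+^N$. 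The key input here is simply that the expansion rate $\lambda_+$ in the unstable cone dominates the contraction of the Jacobian; no subtle geometry is required.

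**Obtaining the localization condition (II).** For (II) I would again work with $T^N$ and estimate, for $B\in\beta^{(nN)}$ with $m(T^{nN}B)$ small, the sum $\sum_{B'\in\beta(T^{nN}B)}\sup_{B'}g_{T^N}$. The natural strategy is to convert this sum over elements of the defining partition into the geometric quantity $\Gamma_N$ controlled by Proposition \ref{Rych7}: each term $\sup_{B'}g_{T^N}$ is comparable (via the distortion bound from Lemma \ref{lemma:Ry12}) to $\frac{1}{({\rm Jac}_G)^N}\frac{|J'|}{|G^N(J')|}$ for the corresponding stable-direction segment, and the projection $\pi$ relates these partition elements in $X$ to segments with expanding direction in $Y=[0,1]^2$. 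Summing over the at most $2N$ elements that $T^{nN}B$ can meet (Lemma \ref{cover}) and invoking the bound $\Gamma_N(I)\le F(r^N+|I|)$ of Proposition \ref{Rych7}, the small-measure hypothesis translates into a small value of $|I|$, which I can make the total sum fall below any prescribed threshold $r<1$.

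**The main obstacle.** I expect the delicate point to be the bookkeeping that connects the abstract localization sum in the factor space $X$ to the one-dimensional geometric sum $\Gamma_n(I)$ along unstable segments in the square. Two issues must be handled carefully: first, the non-invertibility of $G$, which means a single element of $\beta(T^nB)$ can have several preimages and the constant $2$ from the $2$-to-$1$ nature of $G$ (already flagged in the proof of Proposition \ref{prop:8}) must be tracked; and second, translating the measure-smallness hypothesis $m(T^nB)<\eps$ into a genuine bound on the length $|I|$ of the relevant unstable segment, using that $m$ is the factored Lebesgue measure weighted by $1/D$ and that $D(p)=|\xi(p)|$ has the integrability established in Corollary \ref{integrable}. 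Once these correspondences are set up cleanly, the estimate itself reduces to combining Lemma \ref{cover}, Proposition \ref{Rych7}, and the distortion bound, all of which are already in hand; so the difficulty is organizational rather than analytic.
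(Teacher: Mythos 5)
Your treatment of (IV) is exactly the paper's: $g_{T^N}=(\Jac_G)^{-N}\lambda^s_N\le C(\theta_0/\Jac_G)^N=C\lambda_+^N<1$ for large $N$, using Proposition \ref{lambdas}. For (II), however, you take a detour that the paper does not need. The paper's estimate of the localization sum is purely a count: once one knows that $T^NB$ meets at most $2N$ elements of $\beta^N$ (Lemma \ref{cover}, applicable because $\diam(G^N\pi^{-1}(B))<\eps_0$), the sum is bounded by $2N\cdot\sup g_N\le 2NC\lambda_+^N=r_0<1$ --- the same uniform bound already used for (IV). There is no need to identify $\sup_{B'}g_{T^N}$ with the length ratios $|J'|/|G^N(J')|$ of Proposition \ref{Rych7}; that identification is true up to angle constants, but invoking it on top of the $2N$ count is redundant, and making it precise would force you to find a single unstable segment $I$ whose cut pieces under $\mathcal P^{(N)}$ account for \emph{every} element of $\beta^N(T^NB)$, a correspondence that is not automatic. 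Proposition \ref{Rych7} enters this lemma only in spirit (the choice of $N$ with $2NC\lambda_+^N<1$ and of $\eps_0$ as the Lebesgue constant of $\mathcal U_N$).

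The genuine content of (II), which your plan flags but does not resolve, is the production of the threshold $\eps$: one must show that if $\diam(G^N\pi^{-1}(B))\ge\eps_0$ (so that Lemma \ref{cover} is unavailable), then $m(T^NB)\ge\eps$ for a fixed $\eps>0$, making the hypothesis of (II) vacuous in that case. The tools for this are not Corollary \ref{integrable} (integrability of $1/D$ plays no role here) but rather: the geometry of $\partial(G^N\pi^{-1}(B))\subset\bigcup_{k\le N}G^k(L)$, which guarantees an unstable segment $I$ of length $\ge A_5\eps_0$ inside the polygon; Corollary \ref{cor1} and Lemma \ref{lem:9}, which show that off a subset of $I$ of length $\le A_2\delta$ every point has a stable fibre of length $\ge 2\delta$, so $\pi^{-1}(T^NB)$ has two--dimensional Lebesgue measure bounded below; and Lemma \ref{Ry14}, which converts this into a lower bound for $m(T^NB)=\nu(\pi^{-1}(T^NB))$. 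Without this step your argument never exhibits the constant $\eps$ demanded by condition (II), so as written the proposal is an outline with its hardest component missing, and the difficulty there is analytic and geometric, not merely organizational.
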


\begin{proof} Condition (IV) is satisfied  because $g_n=(\Jac_G)^{-n}\cdot \lambda^s_n\le (\Jac_G)^{-n}\cdot(C\theta_0^n)=C(\theta_0/\Jac_G)^n$.
For large $n$, $g_n<1$, since $\theta_0/\Jac_G<1$.
We used Proposition \ref{lambdas} and $\theta_0=\alpha-\sqrt{\alpha^2+2\alpha-2}$, $\Jac_{G_\alpha}=2(1-\alpha)$. For $3/4<\alpha<1$,
we have $\theta_0/\Jac_G<1$.

Now, we will prove that condition (II) is satisfied for some iterate of $T$. The proof is similar to that of Proposition \ref{Rych7}.
Let  $N$ be such that $r_0=2NC(\lambda_+)^N<1$ and let $\eps_0$ be the Lebesgue constant of the cover $\mathcal U_N$ of Lemma \ref{cover}.
By Proposition \ref{lambdas}, $\lambda_+=\theta_0/\Jac_G$ so $g_n\le C\cdot \lambda_+$ for $n\ge 1$.

Let $B\in\beta^{(N)}$ and $A=\pi^{-1}(B)$. Then, $T^N(B)=\pi(G^NA)$. $G^NA$ is a convex polygon such that
$$\partial(G^N(A))\subset \cup_{k=0}^N G^k(L). $$
Except for $L$ itself and the first image $G(L)\subset\{y=1\}$, all subsequent images $G^k(L)$, $k\ge 2$, consist of segments with directions from the unstable cone $J_+$.
Also, all images of the sides of $[0,1]^2$ have this property.
We can assume $\eps_0$ is much smaller than the distance between $L$ and $G(L)$.
There are two possibilities:

(1) $\diam(G^N A)\ge\eps_0$. Then, $G^NA$ contains a segment $I$ with the unstable direction (from $J_+$) of length $A_5\cdot\eps_0$, for some $A_5\le 1$.
If all sides of $G^NA$ belong to $\cup_{k=2}^N G^k(L)$, i.e., they have directions from $J_+$, then obviously $G^NA$ contains a segment $I$ with the direction from $J_+$ of length $\eps_0$.
If one of the sides belongs to $L$ and another to $G(L)$, then $G^NA$ also contains such segment since $\eps_0$ is small.
If only one side of $G^NA$ belongs to $L$ or $G(L)$, then the worst case scenario is a triangle with two remaining sides with directions from $J_+$. Since the angle between directions from $J_+$
and $L$ or $G(L)$ is separated from zero,  $G^NA$ contains a segment $I$ with the  direction from $J_+$ of length $A_5\cdot\eps_0$, for some $A_5\le 1$.

By Corollary \ref{cor1}, for arbitrary $\delta>0$, $|I\setminus D^s(\delta)|\le A_2\delta$. The set $\bar A=\pi^{-1}\pi(G^NA)=\pi^{-1}(T^N B)$ has measure larger than
$A_4^{-1}(1-A_2\delta)\cdot A_5\cdot\eps_0$, where $A_4$ will be found in the following Lemma \ref{Ry14}. So, we put $\delta=\frac 12 A_2^{-1}$ and $\eps=A_4^{-1}(1-A_2\delta)\cdot A_5\cdot\eps_0=\frac 12A_4^{-1}\cdot A_5\cdot\eps_0$.
Then, $\nu(\bar A)>\eps$ and $m(T^NB)=\nu(\bar A)>\eps$, since $m=\pi_*\nu$.

(2) $\diam(G^N A)<\eps_0$. Then, $T^NB$ is contained in no more than $2N$ elements of $\beta^N$ and
$$\sum_{B'\in\beta^N(T^NB)} \sup_{B'} g_N\le (2N)(C\lambda_+^N)=r_0 .$$
Thus, condition (II) is satisfied for $T^N$ with $r=r_0$.
\end{proof}

\begin{lemma}\label{Ry14} (Rychlik \cite{Ry}, Lemma 14) Let $I$ be a segment with the direction from $J_+$ and let $\ell_I$ be the Lebesgue measure on $I$.
Then, the measure $\pi_*(\ell_I)$ is absolutely continuous with respect to $m$ and
\be{\label {density on I delta} \frac {d\pi_*(\ell_I)}{dm} (x)= \frac 1 {\sin \angle(I, x)},}
for $x\in X$, where $\angle(I, x)$ is the angle between segment $I$ and segment $C(x)\in \mathcal P^{-}$.
In particular, for some $A_4>0$ we have
\be{\label{ineq: A_4}\frac 1{A_4}\le \frac {d\pi_*(\ell_I)}{dm} \le A_4 .}
\end{lemma}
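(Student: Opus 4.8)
The plan is to compute the Radon--Nikodym derivative pointwise as a limit of ratios over shrinking pieces of $I$, reading off the factor $1/\sin\angle(I,x)$ from elementary plane geometry, with Proposition~\ref{prop:Ry9} supplying the one nontrivial input. First I would fix $p_0\in I$, set $x_0=\pi(p_0)$, and for a small subsegment $I'\subset I$ containing $p_0$ put $B=\pi(I')$. Since $I$ has its direction in the unstable cone $J_+$ while every leaf $C\in\mathcal P^-$ has its direction in the stable cone $J_-$, and these cones are disjoint, $I$ is transverse to the stable foliation and $\pi|_I$ is injective; hence $\pi_*(\ell_I)(B)=|I'|$. On the other hand $m(B)=\nu(\pi^{-1}B)=\nu(R)$, where $R=\pi^{-1}(\pi(I'))$ is the band swept out by the full stable leaves meeting $I'$, and $\nu=\tfrac1D\bar\nu$. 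It then suffices to prove $\nu(R)=\int_{I'}\sin\angle(I,\pi(p(s)))\,ds$, where $s$ is arc length along $I$; letting $I'\downarrow\{p_0\}$ and dividing gives $\frac{d\pi_*(\ell_I)}{dm}(x_0)=1/\sin\angle(I,x_0)$, which is formula (\ref{density on I delta}).

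The evaluation of $\nu(R)$ is where Proposition~\ref{prop:Ry9} enters. I would coordinatize $R$ by $(s,t)$, with $t$ the signed arc length along the leaf through $p(s)$ measured from $I$, so that $\ell_C=dt$ on each leaf and $\bar\nu=J(s,t)\,ds\,dt$ for the area Jacobian $J$ of the fan map $(s,t)\mapsto p(s)+t\,e(s)$, where $e(s)$ is the unit stable direction of the leaf through $p(s)$. At $t=0$ the term carrying the variation of $e$ drops out, so $J(s,0)=|\det[\,p'(s),e(s)\,]|=\sin\angle(I,\pi(p(s)))$ with no regularity assumption on the foliation. Proposition~\ref{prop:Ry9} says the conditional measure of $\bar\nu$ on each leaf is the normalized length $\tfrac1{|C|}\ell_C$; by uniqueness of the disintegration this forces $J(s,t)$ to be independent of $t$, hence $J(s,t)\equiv\sin\angle(I,\pi(p(s)))$ along the whole leaf. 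Since $\tfrac1D$ is constant, equal to $1/D(s)=1/|C(p(s))|$, on that leaf, Fubini gives $\nu(R)=\int_{I'}\tfrac1{D(s)}\big(\int J(s,t)\,dt\big)ds=\int_{I'}\tfrac1{D(s)}\sin\angle(I,\cdot)\,D(s)\,ds=\int_{I'}\sin\angle(I,\cdot)\,ds$, the leaf length $D(s)$ cancelling exactly against the weight $1/D$ of $\nu$. This is the identity required above.

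Finally I would deduce the two-sided bound (\ref{ineq: A_4}) from cone separation. Directions in $J_+$ have the form $(u,1)$ with $|u|\le\theta_0/(2(1-\alpha))<1$ and directions in $J_-$ have the form $(1,v)$ with $|v|\le\theta_0<1$, so the angle $\angle(I,x)$ between $I$ and any leaf $C(x)$ is bounded away from $0$ and $\pi$, uniformly; thus there is $c>0$ with $c\le\sin\angle(I,x)\le1$, and (\ref{ineq: A_4}) holds with $A_4=1/c\ge1$. The main obstacle is precisely the step $\nu(R)=\int_{I'}\sin\angle(I,\cdot)\,ds$: the stable foliation is only transversally H\"older, so the Jacobian $J(s,t)$ for $t\ne0$ is not accessible by naive differentiation, and a priori the non-parallelism of neighbouring leaves could contribute at leading order. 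Proposition~\ref{prop:Ry9} is exactly what removes this contribution, pinning the conditional measures to uniform and letting the clean boundary value $J(s,0)=\sin\angle(I,\cdot)$ propagate along each leaf.
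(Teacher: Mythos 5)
Your overall strategy is the same as the paper's: both arguments rest on Proposition \ref{prop:Ry9} (conditional measures are normalized length on the leaves), the elementary observation that a stable leaf meets a transversal of direction in $J_+$ at an angle whose sine converts transverse width into leaf length, and cone separation for the two-sided bound (\ref{ineq: A_4}). Your reduction $\pi_*(\ell_I)(B)=|I'|$, the cancellation of the leaf length $D(s)$ against the weight $1/D$ of $\nu$, and the final estimate are all correct and match the paper.

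There is, however, a genuine gap at the step you yourself flag as the crux: the identity $J(s,0)=|\det[\,p'(s),e(s)\,]|=\sin\angle(I,\cdot)$ for the ``fan map'' $(s,t)\mapsto p(s)+t\,e(s)$. Writing $\bar\nu=J(s,t)\,ds\,dt$ and evaluating $J$ at $t=0$ by a determinant is a change-of-variables statement, and it is not available here: the leaf direction $e(s)$ is only measurable (given by the continued-fraction expansion of the itinerary), so the fan map is not differentiable in $s$ and the area formula does not apply. If instead you define $J$ measure-theoretically through the disintegration (which is what Proposition \ref{prop:Ry9} lets you do, and which does give constancy in $t$), then the boundary value $J(s,0)=\sin\angle$ is no longer a formal computation but precisely the thing to be proved. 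The paper proves it by the thin-strip argument: for the strip $I_\delta$ of width $\delta$ about $I$, a point $x\in D^s(\delta)\cap I$ has, by Lemma \ref{lem:9}, a leaf reaching at least distance $\delta$ from $x$, so $\xi(x)\cap I_\delta$ is an interval of length exactly $\delta/\sin\omega(x)$ and $\nu_x(I_\delta)=\delta/(\sin\omega(x)D(x))$; integrating over $\pi(E)$ gives $\bar\nu(\pi^{-1}(\pi E)\cap I_\delta)=\delta\int_{\pi(E)}(\sin\omega)^{-1}dm$, while Corollary \ref{cor1} ($|I\setminus D^s(\delta)|\le A_2\delta$) shows the same quantity equals $\ell_I(E)\delta+o(\delta)$; dividing by $\delta$ and letting $\delta\to0$ yields (\ref{density on I delta}). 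Your proposal contains no substitute for this control of the exceptional set of short leaves (those not crossing the strip), which is exactly what Lemma \ref{lem:9} and Corollary \ref{cor1} are invoked for; without it the leading-order identification of the transverse density at $I$ is asserted, not established.
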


\begin{figure}[h] 
  \centering
  \includegraphics[bb=0 -1 306 357,width=3.92in,height=4.58in,keepaspectratio]{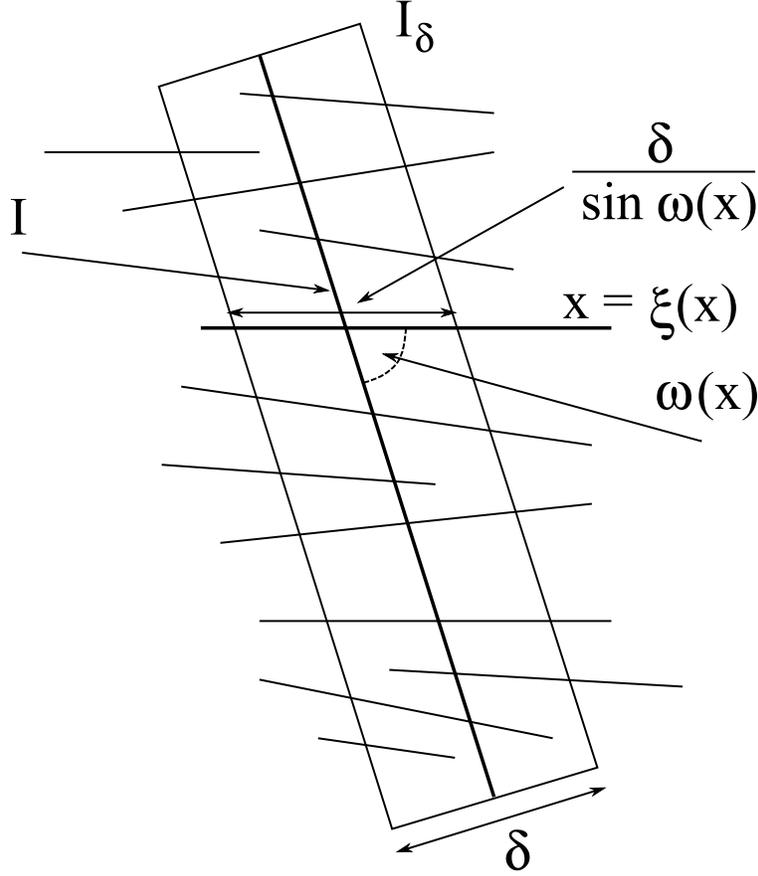}
  \caption{Strip $I_\delta$ for the proof of Lemma \ref{Ry14}. }
  \label{fig:strip}
\end{figure}

\begin{proof} Fix some small $\delta>0$. Let $I_\delta$ be a strip of width $\delta$ ($\delta/2$ on each side of $I$). We note that if $x\in D^s(\delta)\cap I$ 
and $\dis(x,\partial I)>\delta$, then $\xi(x)\cap I_\delta$ is an interval of length $\delta/\sin \omega(x)$, where $\omega(x)=\angle(I,x)$. See Figure \ref{fig:strip}.
So $$\nu_x(I_\delta)=\frac \delta{\sin \omega(x)\cdot D(x)}.$$
Let $E$ be a subinterval of  $I$. If $\bar \nu$ is the Lebesgue measure on $[0,1]^2$, we have 
$$\bar\nu(\pi^{-1}(\pi E)\cap I_\delta)=\int_{\pi(E)}\nu_x(x\cap I_\delta)d\bar m= \delta \int_{\pi(E)}\frac 1{\sin \omega(x) } dm .$$
On the other hand, by Corollary \ref{cor1}, $$\bar\nu(\pi^{-1}(\pi E)\cap I_\delta)=\ell_I(E)\cdot \delta+ o(\delta).$$
This proves (\ref{density on I delta}). Since the angles between directions from $J_-$ and $J_+$ are separated from zero the inequality (\ref{ineq: A_4}) is also proved.
\end{proof}

\begin{remark} Condition (III) holds since $\beta$ is finite.
\end{remark}

Thus, we checked the assumptions of Theorems \ref{TH1} and \ref{TH2}. Hence, we have
\begin{theorem} The results of Theorems  \ref{TH1} and \ref{TH2} apply to $G_\alpha$ maps for $3/4<\alpha<1$.
\end{theorem}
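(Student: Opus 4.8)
The plan is to deduce the theorem by verifying conditions (I)--(IV) of Theorems \ref{TH1} and \ref{TH2} for the factor map $T$ induced by $G_\alpha$, working throughout with the reference measure $m$ obtained from $\nu=\frac1D\bar\nu$. First I would assemble the ingredients already established: condition (I) holds for $(T,\beta,g)$ by Lemma \ref{lemma:Ry12}; condition (III) holds because $\beta=\{\pi(A_1),\pi(A_2)\}$ is finite and $g=g_T=\frac1{\Jac_G}\lambda^s$ is bounded by Proposition \ref{lambdas}; and conditions (II) and (IV) hold for some iterate $T^N$ by Lemma \ref{lemma:Ry13}. The finiteness of $\nu=\frac1D\bar\nu$, which is what makes $m$ a normalizable probability measure and thus legitimizes the abstract framework, is exactly Corollary \ref{integrable} with $\beta=1$.

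Next I would promote conditions (I) and (III) to the iterate. By the Remark following condition (III), if (I) and (III) hold for $(T,\beta,g)$ then they hold for $(T^N,\beta^N,g_N)$, with the same distortion constant $d$ and with $\sum_{B\in\beta^N}\sup_B g_N\le\left(\sum_{B\in\beta}\sup_B g\right)^N<+\infty$. Taking the iterate $T^N$ furnished by Lemma \ref{lemma:Ry13}, all four conditions (I)--(IV) then hold for the system $(T^N,\beta^N,g_N)$. Applying Theorem \ref{TH1} and Theorem \ref{TH2} to $T^N$ yields the boundedness of $(P_{T^N}^n\mathbf{1})_{n\ge1}$ in $L^\infty$, the $L^1$-convergence of the Ces\`aro averages to a fixed density $\phi$ of $P_{T^N}$, the finite-dimensional projection $Q$, and the full ergodic decomposition $\phi_1,\dots,\phi_s$ with basins covering $X$.

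Finally I would transfer these conclusions from $T^N$ back to $T$. Condition (III) makes $P_T$ bounded on $L^\infty$, since $P_T\mathbf{1}(x)=\sum_{y\in T^{-1}x}g(y)\le\sum_{B\in\beta}\sup_B g<+\infty$; combined with the functoriality $P_{T^N}=(P_T)^N$, the averaged density $\tilde\phi=\frac1N\sum_{j=0}^{N-1}P_T^j\phi$ is nonnegative, lies in $L^\infty$, and satisfies $P_T\tilde\phi=\tilde\phi$ by a telescoping cancellation using $P_{T^N}\phi=\phi$. Hence $\tilde\phi\,m$ is the desired $T$-invariant absolutely continuous measure, and the $T$-ergodic components are recovered by grouping the $T^N$-components $\phi_i$ under the cyclic $\mathbb Z/N$ action of $T$. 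The main obstacle is precisely this passage from the iterate $T^N$ to $T$ itself: one must confirm that absolute continuity and the $L^\infty$ bound survive the averaging (which condition (III) guarantees) and that the finite-dimensional ergodic structure of Theorem \ref{TH2} is preserved under the cyclic action, which is routine spectral bookkeeping rather than new analysis.
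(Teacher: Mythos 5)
Your proposal is correct and follows essentially the same route as the paper, which likewise obtains the theorem by citing Lemma \ref{lemma:Ry12} for (I), the finiteness of $\beta$ for (III), and Lemma \ref{lemma:Ry13} for (II) and (IV) on an iterate $T^N$. Your explicit averaging argument $\tilde\phi=\frac1N\sum_{j=0}^{N-1}P_T^j\phi$ for passing from $T^N$ back to $T$ is a detail the paper leaves implicit, but it is standard and does not constitute a different approach.
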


\section{Invariant measures for maps $G$.}\label{Sect:Invariant}

We proved the existence of the invariant measures of the form $\phi\cdot m$ for the factor map $T$. Now, we will construct a $G$-invariant measure
$\mu$ such that the projection $\pi_*(\mu)$ onto $X$ coincides with $\phi\cdot m$.

Let $f:[0,1]^2\to\mathbb R$ be a continuous function. We will define $\mu(f)$.
Let $$f^<(p)=\inf_{\xi(p)} f\ ,\ p\in [0,1]^2, $$ and
$$f^>(p)=\sup_{\xi(p)} f\ ,\ p\in [0,1]^2. $$
Both, $f^<$ and $f^>$ are $\Xi$-measurable ($\Xi$ is the $\sigma$-algebra generated by the partition $\xi=\mathcal P^-$).
We define $$\mu(f)=\lim_{n\to\infty}\tilde \mu((f\circ G^n)^<),$$
where $\tilde\mu=\phi\cdot m$.

\begin{figure}[h] 
  \centering
  \includegraphics[bb=0 -1 277 193,width=3.92in,height=2.74in,keepaspectratio]{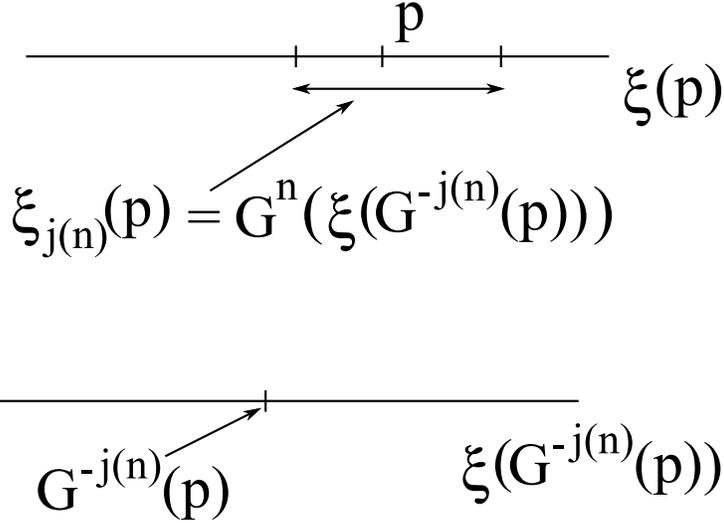}
  \caption{Definition of the function $f_{n|j(n)}^<$.}
  \label{fig:definition-f-less}
\end{figure}

\begin{lemma} \label{Lemma:Ry15}  The limits $\lim_{n\to\infty}\tilde \mu((f\circ G^n)^<)$ and $\lim_{n\to\infty}\tilde \mu((f\circ G^n)^>)$ exist and are equal.
\end{lemma}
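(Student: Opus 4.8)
The plan is to show that the two sequences $a_n=\tilde\mu((f\circ G^n)^<)$ and $b_n=\tilde\mu((f\circ G^n)^>)$ are each monotone and bounded (hence convergent), and that their integrands differ by an amount tending to zero uniformly (hence the two limits coincide). Both $(f\circ G^n)^<$ and $(f\circ G^n)^>$ are $\Xi$-measurable, so they descend to functions $\bar f_n^<,\bar f_n^>$ on $X=[0,1]^2/\mathcal P^-$; since $\tilde\mu=\phi\cdot m$ lives on $X$, the quantities $a_n,b_n$ are well defined and bounded by $\sup|f|$.

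First I would establish monotonicity. The key structural fact is the forward invariance of the stable partition, $G(\xi(p))\subseteq\xi(Gp)$ (this is the relation $S(C(x))\subset C(Tx)$ of Section~\ref{Abstract}), together with the factor identity $\pi\circ G=T\circ\pi$. For $q\in\xi(p)$ we have $Gq\in\xi(Gp)$, whence $f(G^{n+1}q)=(f\circ G^n)(Gq)\ge\inf_{\xi(Gp)}(f\circ G^n)=(f\circ G^n)^<(Gp)$; taking the infimum over $q\in\xi(p)$ gives the pointwise bound $(f\circ G^{n+1})^{<}\ge (f\circ G^n)^<\circ G$. Since $(f\circ G^n)^<$ is $\Xi$-measurable, the right-hand side descends to $\bar f_n^<\circ T$ on $X$, and the $T$-invariance of $\tilde\mu$ (which follows from $P_T\phi=\phi$, i.e. $\tilde\mu(g\circ T)=\tilde\mu(g)$) yields $a_{n+1}\ge\tilde\mu((f\circ G^n)^<\circ G)=a_n$. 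The same argument with suprema reverses every inequality and gives $b_{n+1}\le b_n$. Thus $(a_n)$ is nondecreasing, $(b_n)$ is nonincreasing, both are bounded, and hence both converge.

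Then I would show the limits agree by controlling the oscillation. Because the leaves of $\mathcal P^-$ are never cut by the forward images of the partition line $L$, the map $G^n$ is linear on $\xi(p)$ with a contraction factor $\lambda^s_n(p)$ that is constant along the leaf (by Lemma~\ref{direc_rate} the stable direction $v$ is constant on $\xi(p)$), so $|G^n(\xi(p))|=\lambda^s_n(p)\,|\xi(p)|\le C\sqrt2\,\lambda_-^{\,n}$ using Proposition~\ref{lambdas} and $|\xi(p)|\le\sqrt2$. Writing $\omega_f$ for the modulus of continuity of the uniformly continuous function $f$, this gives, for every $p$, the uniform bound $0\le (f\circ G^n)^>(p)-(f\circ G^n)^<(p)=\mathrm{osc}_{\,G^n(\xi(p))}f\le\omega_f(C\sqrt2\,\lambda_-^{\,n})$. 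Integrating against the probability measure $\tilde\mu$ yields $0\le b_n-a_n\le\omega_f(C\sqrt2\,\lambda_-^{\,n})\to0$, so $\lim a_n=\lim b_n$, which is exactly the assertion of the lemma.

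The main obstacle is the monotonicity step: one must orient the inequalities correctly and combine the forward invariance $G(\xi(p))\subseteq\xi(Gp)$, the factor relation $\pi\circ G=T\circ\pi$, and the $T$-invariance of $\tilde\mu$ in the right order; once these are in place, $a_{n+1}\ge a_n$ and $b_{n+1}\le b_n$ follow cleanly. The oscillation estimate is routine given the uniform contraction bound of Proposition~\ref{lambdas}, and the equality of limits then comes for free from the fact that $\tilde\mu$ is a probability measure.
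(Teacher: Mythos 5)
Your argument is correct, but the step where you establish \emph{existence} of the two limits takes a genuinely different route from the paper's. The paper handles existence by building auxiliary $\Xi$-measurable approximants $f_n^<=\min_{j(n)}(f\circ G^n)^<\circ G^{-j(n)}$ and $f_n^>=\max_{j(n)}(f\circ G^n)^>\circ G^{-j(n)}$, running over all $2^n$ inverse branches of $G^n$ (this is exactly where the non-invertibility of $G$ forces the extra bookkeeping the authors emphasize); it then shows $f_n^<\nearrow f$ and $f_n^>\searrow f$ uniformly and squeezes $\tilde\mu((f\circ G^n)^<)$ and $\tilde\mu((f\circ G^n)^>)$ between $\tilde\mu(f_n^<)$ and $\tilde\mu(f_n^>)$, which share a common limit. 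You instead prove directly that $a_n$ is nondecreasing and $b_n$ is nonincreasing, using only the forward inclusion $G(\xi(p))\subseteq\xi(Gp)$ (so that $G^{n+1}(\xi(p))\subseteq G^n(\xi(Gp))$, whence $(f\circ G^{n+1})^<\ge (f\circ G^n)^<\circ G$), the factor relation $\pi\circ G=T\circ\pi$, and the $T$-invariance of $\tilde\mu$; boundedness then gives convergence, and equality of the limits follows from the uniform oscillation estimate, which is the same $\omega_{\delta_n}(f)$ bound the paper uses (your explicit $\delta_n\le C\sqrt2\,\lambda_-^{\,n}$ via Proposition~\ref{lambdas} is fine, since $v$ and hence $\lambda^s_n$ are constant on each leaf of $\mathcal P^-$). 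Your approach buys a cleaner proof that bypasses the inverse-branch construction altogether — the one complication the paper singles out as new relative to the Lozi case — and it yields the slightly stronger conclusion that both sequences converge monotonically; the paper's construction, on the other hand, exhibits the common limit as $\lim_n\tilde\mu(f_n^<)$ for explicit $\Xi$-measurable minorants of $f$, which is a somewhat more constructive description of the measure $\mu$ being defined. Both arguments ultimately rest on the same two facts, the semi-invariance of the stable partition and the uniform contraction of its leaves, so your proof is a valid alternative.
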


\begin{proof}
This proof follows the proof of  Lemma 15 in Rychlik \cite{Ry}, but we have to deal with the fact that $G$ is not invertible. This causes a need for more complicated notation.
The map $G$ has two invertible ``branches" $G_1=G_{|A_1}$ and $G_2=G_{|A_2}$. Corresponding inverses are $G_1^{-1}$ and  $G_2^{-1}$.
Let $j(n)=(i_1,i_2,\dots, i_n)\in \{1,2\}^n$. Then, $G^{j(n)}=G_{i_n}\circ\dots\circ G_{i_2}\circ G_{i_1}$ and $G^{-j(n)}=G_{i_1}^{-1}\circ G_{i_{2}}^{-1}\circ\dots\circ G_{i_n}^{-1}$.
Let us also introduce the notation $(j(n),i_{n+1})=( i_1,i_2,\dots, i_n,i_{n+1})$, for $i_{n+1}\in\{1,2\}$.

We define $f_{n|j(n)}^<(p)=(f\circ G^n)^<\circ G^{-j(n)}(p)$, where $j(n)$ is 
such that $p\in G^{j(n)}([0,1]^2)$. Note that $$f_{n|j(n)}^<(p)= \inf_{q\in \xi(G^{-j(n)}(p))} f(G^n(q))=\inf_{s\in G^n(\xi(G^{-j(n)}(p)))}f(s)$$  is constant on $\xi(G^{-j(n)}(p))$, see Figure \ref{fig:definition-f-less}. Thus, $f_{n|j(n)}^<$ is $\Xi$-measurable.
Also, $f_{n|j(n)}^<\le f_{n+1|(j(n),i_{n+1})}^<$ since $G$ contracts segments $\xi\in \mathcal P^-$.

We define $$f_n^< = \min_{j(n)}f_{n|j(n)}^< .$$
Now, $f_{n}^<$ is $\Xi$ measurable and 
$f_{n}^<\le f_{n+1}^<$.

Similarly, we define  $f_{n|j(n)}^>=(f\circ G^n)^>\circ G^{-j(n)}$ and $$f_n^> = \max_{j(n)}f_{n|j(n)}^> .$$
The functions $f_{n}^>$ are $\Xi$ measurable and 
$f_{n}^>\ge f_{n+1}^>$.

We have $f\ge f_n^<$ for all $n\ge 1$ and $f_1^<\le f_2^<\le \dots\le f_n^<\le \dots$.
Similarly, $f\le f_n^>$ for all $n\ge 1$ and $f_1^>\ge f_2^>\ge \dots\le f_n^>\ge \dots$. Also, if $\xi_n=G^n\xi$ is a partition of $G^n([0,1]^2)$,
then $$ f_n^>(p)-f_n^<(p)= \sup_{\xi_n(p)} f-\inf_{\xi_n(p)} f\le \omega_{\delta_n}(f), $$
where $\delta_n=\sup_p\diam(\xi_n(p))$ and
$$\omega_\delta(f)=\sup_{\dis(x,y)<\delta} |f(x)-f(y)|,$$
 is the modulus of continuity of $f$. Thus, $f_n^>-f_n^<\to 0$ as $n\to\infty$ and, consequently, $f_n^> \searrow f$ and $f_n^<\nearrow f$ uniformly as $n\to\infty$.

We have 
\bes {(f\circ G^n)^<(p)&=\inf_{q\in\xi(p)} f(G^n(q))=\inf_{G^n(\xi(p))} f\\ &\ge \min_{j(n)}\ \ \inf_{G^{j(n)}\left(\xi \left(G^{-j(n)}(G^n(p)))\right)\right)}= (f_n^<\circ G^n) (p),}
so $(f\circ G^n)^<\ge f_n^<\circ G^n$. Similarly, $(f\circ G^n)^>\le f_n^>\circ G^n$.
We have
\bes{&|\tilde\mu ((f\circ G^n)^>)-\tilde\mu ((f\circ G^n)^<)|\le\sup|(f\circ G^n)^>-(f\circ G^n)^<|\\
&\le  \sup|f_n^>\circ G^n-f_n^<\circ G^n|\le \sup|f_n^>-f_n^<|\le \omega_{\delta_n}(f), }
which goes to 0 as $n\to\infty$.
Thus, both limits are the same if they exist. To show existence  we write
$$ f_n^<\circ G^n\le (f\circ G^n)^<\le (f\circ G^n)^>\le f_n^>\circ G^n , $$
which implies 
$$\tilde\mu\left( f_n^<\circ G^n \right)\le \tilde\mu\left((f\circ G^n)^< \right)\le \tilde\mu\left((f\circ G^n)^> \right)\le \tilde\mu\left(f_n^>\circ G^n \right) . $$
By the $T$-invariance of $\tilde\mu$ we have
$$\tilde\mu\left( f_n^<\circ G^n \right)=\tilde\mu\left( f_n^<\circ T^n \right)=\tilde\mu\left( f_n^< \right),$$
and similarly $\tilde\mu\left( f_n^>\circ G^n \right)=\tilde\mu\left( f_n^> \right).$ Since both sequences
$\{f_n^<\}$ and $\{f_n^>\}$ converge uniformly to the same limit we have  $\lim_{n\to\infty} \tilde \mu (f_n^<)=\lim_{n\to\infty} \tilde \mu (f_n^>)$,
which completes the proof.
\end{proof}

\begin{proposition} \label{Prop:Ry11} (Rychlik \cite{Ry}, Proposition 11) Let $\tilde \mu$ be an arbitrary measure on $X$ which is $T$-invariant and such that
the sets of $\Sigma$ are $\tilde \mu$ measurable. Then, there exists  a unique measure on $Y$ such that $\mu$ is $S$-invariant and $\pi_*(\mu)=\tilde\mu$.
\end{proposition}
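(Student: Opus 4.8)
The plan is to take $\mu$ to be exactly the functional $f\mapsto \lim_{n\to\infty}\tilde\mu((f\circ G^n)^<)$ constructed just before the statement, and to verify in turn that it is a measure, that it is $S$-invariant, that it projects to $\tilde\mu$, and that it is the only measure doing so. First I would record the squeeze that drives everything. By the inequalities established in the proof of Lemma~\ref{Lemma:Ry15} we have $f_n^<\circ G^n\le (f\circ G^n)^<\le (f\circ G^n)^>\le f_n^>\circ G^n$, and by the $T$-invariance of $\tilde\mu$ together with the $\Xi$-measurability of $f_n^<,f_n^>$ one gets $\tilde\mu(f_n^<\circ G^n)=\tilde\mu(f_n^<)$ and $\tilde\mu(f_n^>\circ G^n)=\tilde\mu(f_n^>)$. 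Since $f_n^<\nearrow f$ and $f_n^>\searrow f$ uniformly, the two outer sequences $\tilde\mu(f_n^<)$ and $\tilde\mu(f_n^>)$ share a common limit, which the squeeze forces to equal $\mu(f)$; thus $\mu(f)=\lim_n\tilde\mu(f_n^<)=\lim_n\tilde\mu(f_n^>)$.

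To see that $\mu$ is a measure I would check that it is a positive linear functional on $C([0,1]^2)$ and invoke Riesz representation. Positivity is immediate, since $f\ge 0$ forces $(f\circ G^n)^<\ge 0$. For additivity I would use the elementary estimates $(f\circ G^n)^<+(g\circ G^n)^<\le ((f+g)\circ G^n)^<\le((f+g)\circ G^n)^>\le (f\circ G^n)^>+(g\circ G^n)^>$ and pass to the limit, both outer terms tending to $\mu(f)+\mu(g)$; homogeneity follows from $((cf)\circ G^n)^<=c(f\circ G^n)^<$ for $c\ge 0$ and $((cf)\circ G^n)^<=c(f\circ G^n)^>$ for $c<0$. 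The $S$-invariance is then essentially free, since $\mu(f\circ G)=\lim_n\tilde\mu((f\circ G^{n+1})^<)$ is the limit defining $\mu(f)$ shifted by one index. For the projection identity $\pi_*(\mu)=\tilde\mu$, I would take $f=g\circ\pi$ with $g$ a function on $X$; such $f$ is constant on each fibre $\xi(p)$, so using $\pi\circ G=T\circ\pi$ we get $(f\circ G^n)^<=(g\circ T^n)\circ\pi$, whence $\tilde\mu((f\circ G^n)^<)=\tilde\mu(g\circ T^n)=\tilde\mu(g)$ by $T$-invariance, giving $\mu(g\circ\pi)=\tilde\mu(g)$.

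The substantive point is uniqueness, and this is precisely where the non-invertibility of $G$ has already been absorbed into the definition of $f_n^<,f_n^>$ as a minimum (resp.\ maximum) over all admissible pasts $j(n)\in\{1,2\}^n$. Suppose $\mu'$ is any $S$-invariant measure with $\pi_*(\mu')=\tilde\mu$. Because $f_n^<$ and $f_n^>$ are $\Xi$-measurable they factor through $\pi$, so $\mu'(f_n^<)=\tilde\mu(f_n^<)$ and $\mu'(f_n^>)=\tilde\mu(f_n^>)$. Combining with $f_n^<\le f\le f_n^>$ gives $\tilde\mu(f_n^<)\le\mu'(f)\le\tilde\mu(f_n^>)$, and letting $n\to\infty$ squeezes $\mu'(f)$ to the common limit $\mu(f)$; hence $\mu'=\mu$ on $C([0,1]^2)$ and therefore as measures. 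The only steps requiring genuine care are the $\Xi$-measurability of $f_n^<,f_n^>$ and their uniform convergence to $f$ in the non-invertible setting, but both are exactly what Lemma~\ref{Lemma:Ry15} supplies, so the remaining argument reduces to the clean squeeze above.
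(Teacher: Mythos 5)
Your existence half is fine and in fact more complete than the paper's: the paper simply takes $\mu(f)=\lim_n\tilde\mu((f\circ G^n)^<)$ as ``the measure constructed in Lemma \ref{Lemma:Ry15}'' and does not verify linearity, positivity, $S$-invariance or $\pi_*\mu=\tilde\mu$, whereas you check all of these, correctly.

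The uniqueness half, however, has a genuine gap. Your squeeze $\tilde\mu(f_n^<)\le\mu'(f)\le\tilde\mu(f_n^>)$ never invokes the $S$-invariance of $\mu'$, so if it were valid it would prove that \emph{every} measure projecting to $\tilde\mu$ coincides with $\mu$, i.e.\ that $\tilde\mu$ has a unique lift at all. That is false in this setting: by Lemma \ref{lem:9} and Corollary \ref{cor1} the fibres of $\pi$ are segments of length at least $2\delta$ off a set of measure $A_2\delta$, so one can redistribute the conditional mass along fibres (say, a point mass at one endpoint of each fibre versus the other) and obtain distinct non-invariant lifts of $\tilde\mu$. The root of the problem is the triple of properties you import for $f_n^<,f_n^>$: constancy on the fibres of $\mathcal P^{-}$ ($\Xi$-measurability), the sandwich $f_n^<\le f\le f_n^>$, and uniform convergence to $f$. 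These three cannot hold simultaneously for a general continuous $f$, since together they would force $f$ itself to be constant on fibres. Concretely, $f_{n|j(n)}^<$ restricted to a fibre $C$ lives only on the short sub-segment $C\cap G^{j(n)}([0,1]^2)=G^n\bigl(\xi(G^{-j(n)}p)\bigr)$, and the minimum over admissible pasts varies from point to point of $C$; the uniform convergence to $f$ is exactly what rules out constancy along $C$. (The assertion inside the proof of Lemma \ref{Lemma:Ry15} that $f_n^<$ is $\Xi$-measurable is delicate for the same reason; it is safer to use only the \emph{statement} of that lemma, namely that $\lim_n\tilde\mu((f\circ G^n)^<)=\lim_n\tilde\mu((f\circ G^n)^>)=\mu(f)$.)

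The functions that are honestly constant on fibres are $(f\circ G^n)^<$ and $(f\circ G^n)^>$, and to bring them to bear on $\mu'(f)$ you must first write $\mu'(f)=\mu'(f\circ G^n)$ --- this is precisely where the $S$-invariance of the competitor is indispensable. That is the paper's route: for an $S$-invariant $\mu'$ with $\pi_*\mu'=\tilde\mu$,
\begin{equation*}
\tilde\mu\bigl((f\circ G^n)^<\bigr)=\mu'\bigl((f\circ G^n)^<\bigr)\le\mu'(f\circ G^n)=\mu'(f)\le\mu'\bigl((f\circ G^n)^>\bigr)=\tilde\mu\bigl((f\circ G^n)^>\bigr),
\end{equation*}
and both ends converge to $\mu(f)$ by Lemma \ref{Lemma:Ry15}. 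Replacing your uniqueness paragraph with this squeeze completes the proof.
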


\begin{proof} Let $\mu$ be constructed  as  in Lemma \ref{Lemma:Ry15}  and let $\eta$ be some other $S$-invariant measure such that $\pi_*(\eta)=\tilde\mu$.
For every continuous function $f$  on $Y$ we have $\eta(f^<)\le\eta(f)\le \eta(f^>)$. Since $\eta(f^<)=(\pi_*\eta)(f^<)=\tilde\mu (f^<)$ (and similarly for $f^>$)
for any function $f$ and in particular for $f\circ S^n$, we get $$\tilde\mu((f\circ S^n)^<)\le\eta(f)\le\tilde\mu((f\circ S^n)^>),$$
as $\eta(f\circ S^n)=\eta(f)$. Going to the limit completes the proof.
\end{proof}

\begin{corollary} In view of Theorem \ref{TH2}, we can construct $G$-invariant measures $\mu_1,\mu_2,\dots,\mu_s$ such that $\pi_*(\mu_i)=\phi_i\cdot m$, $i=1,2,\dots,s$.
\end{corollary}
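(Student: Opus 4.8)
The plan is to obtain each $\mu_i$ by applying Proposition \ref{Prop:Ry11} to the corresponding $T$-invariant measure $\tilde\mu_i=\phi_i\cdot m$ produced by Theorem \ref{TH2}. There is no new analytic content to supply here; the work is purely a matter of checking that the hypotheses of Proposition \ref{Prop:Ry11} are met and then invoking it $s$ times.

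First I would record what Theorem \ref{TH2} gives us about the $\phi_i$. By part (4), each $\phi_i$ is a nonnegative element of $\mathcal R(Q)$ with $\int_X\phi_i\,dm=1$, and by part (3) it is an eigenvector of $P=P_T$ for the eigenvalue $1$, that is $P_T\phi_i=\phi_i$. Next I would translate this fixed-point equation into the $T$-invariance of $\tilde\mu_i=\phi_i\cdot m$: using the defining identity $\int_X h\cdot P_Tf\,dm=\int_X(h\circ T)\,f\,dm$ with $f=\phi_i$ gives $\int_X h\,d\tilde\mu_i=\int_X(h\circ T)\,d\tilde\mu_i$ for all $h\in L^\infty(X,\Sigma,m)$, which is exactly $T_*\tilde\mu_i=\tilde\mu_i$. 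Since $\tilde\mu_i$ is defined through a density with respect to $m$, every set of $\Sigma$ is $\tilde\mu_i$-measurable, so the hypotheses of Proposition \ref{Prop:Ry11} hold.

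Recalling that in the present setting $Y=[0,1]^2$ and $S=G$, I would then apply Proposition \ref{Prop:Ry11} to each $\tilde\mu_i$, $i=1,\dots,s$. For each $i$ this yields a unique $G$-invariant measure $\mu_i$ on $[0,1]^2$ with $\pi_*(\mu_i)=\tilde\mu_i=\phi_i\cdot m$, which is precisely the assertion of the corollary. Explicitly, $\mu_i$ is the measure constructed in Lemma \ref{Lemma:Ry15}, namely $\mu_i(f)=\lim_{n\to\infty}\tilde\mu_i\bigl((f\circ G^n)^<\bigr)$ for continuous $f:[0,1]^2\to\mathbb R$.

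The only step that asks for any verification is the equivalence between the eigenvalue equation $P_T\phi_i=\phi_i$ and the $T$-invariance of $\phi_i\cdot m$, and even this is immediate from the adjoint (Koopman/Frobenius--Perron) characterization used throughout Section \ref{Abstract}. Consequently I would not expect a genuine obstacle: all the difficulty has already been absorbed into Theorem \ref{TH2}, Lemma \ref{Lemma:Ry15}, and Proposition \ref{Prop:Ry11}, and the corollary is merely their combination.
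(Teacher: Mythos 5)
Your proof is correct and takes the same route the paper intends: the corollary is stated without proof precisely because it is the immediate combination of Theorem \ref{TH2}, Lemma \ref{Lemma:Ry15} and Proposition \ref{Prop:Ry11} that you spell out, applied to each $\tilde\mu_i=\phi_i\cdot m$. The one detail you add explicitly --- translating $P_T\phi_i=\phi_i$ into $T_*(\phi_i\cdot m)=\phi_i\cdot m$ via the adjoint identity for the Frobenius--Perron operator --- is exactly the verification needed to invoke Proposition \ref{Prop:Ry11}.
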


\begin{theorem}\label{TH4} (Rychlik \cite{Ry}, Theorem 4) Let $\mu$ be a Borel, regular measure on $[0,1]^2$ such that $\pi_*\mu$ is absolutely continuous with respect to $m$. Then,
$$\frac 1n \sum_{k=0}^{n-1} G_*^k\mu \xrightarrow{{\ \ n\to\infty\ \ }}  \sum_{i=1}^s\mu(\bar{C_i})\cdot \mu_i \ ,$$
where $\bar C_i=\pi^{-1}(C_i)$, $C_1,C_2,\dots, C_s$ as in Theorem \ref{TH2}, $\mu_1,\mu_2,\dots,\mu_s$ are as above, and the convergence is in $*$-weak topology of measures.
\end{theorem}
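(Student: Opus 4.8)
Since weak-$*$ convergence is tested against continuous functions, the plan is to fix an arbitrary continuous $f:[0,1]^2\to\mathbb R$ and prove
$$\frac 1n \sum_{k=0}^{n-1}\int f\circ G^k\, d\mu \xrightarrow{\ n\to\infty\ } \sum_{i=1}^s \mu(\bar C_i)\int f\, d\mu_i,$$
using $\int f\, d(G_*^k\mu)=\int f\circ G^k\, d\mu$. The obstruction is that $f\circ G^k$ is \emph{not} $\Xi$-measurable, so it cannot be pushed down directly to the factor map $T$ on $X$, and this is precisely where non-invertibility of $G$ must be handled. I would circumvent this by replacing $f$ with the $\Xi$-measurable approximants $f_N^<$ from Lemma \ref{Lemma:Ry15}, transferring the averaged integrals to the factor dynamics where Theorem \ref{TH2} applies, and letting $N\to\infty$ at the end.

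First I would fix $N$ and use Lemma \ref{Lemma:Ry15}, which gives $f_N^<\le f$ together with $0\le f-f_N^<\le f_N^>-f_N^<\le\omega_{\delta_N}(f)$ uniformly. Composing with $G^k$ preserves this bound pointwise, so for every $k$,
$$\left|\int f\circ G^k\, d\mu-\int f_N^<\circ G^k\, d\mu\right|\le \omega_{\delta_N}(f)\,\mu([0,1]^2),$$
an estimate \emph{uniform in $k$}, hence one that survives the Cesàro average. Now $f_N^<$ is $\Xi$-measurable, so $f_N^<=\bar\psi\circ\pi$ for some bounded $\bar\psi$ on $X$; since $\pi\circ G=T\circ\pi$ we have $f_N^<\circ G^k=(\bar\psi\circ T^k)\circ\pi$, whence $\int f_N^<\circ G^k\, d\mu=\int\bar\psi\circ T^k\, d(\pi_*\mu)$. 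Writing $\pi_*\mu=w\cdot m$ with $w\in L^1(X,\Sigma,m)$ (possible because $\pi_*\mu\ll m$) and invoking the duality of the Koopman and Frobenius--Perron operators yields $\int\bar\psi\circ T^k\, d(\pi_*\mu)=\int\bar\psi\cdot P^k w\, dm$.

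Averaging and applying Theorem \ref{TH2}(2), we have $\frac 1n\sum_{k=0}^{n-1}P^k w\to Qw$ in $L^1$, and since $\bar\psi\in L^\infty$,
$$\frac 1n\sum_{k=0}^{n-1}\int f_N^<\circ G^k\, d\mu\xrightarrow{\ n\to\infty\ }\int\bar\psi\cdot Qw\, dm.$$
By Theorem \ref{TH2}(4), $Qw=\sum_{i=1}^s\left(\int_{C_i}w\, dm\right)\phi_i=\sum_{i=1}^s\mu(\bar C_i)\,\phi_i$, because $\int_{C_i}w\, dm=(\pi_*\mu)(C_i)=\mu(\bar C_i)$. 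Finally, using $\pi_*\mu_i=\phi_i\cdot m$ we identify $\int\bar\psi\,\phi_i\, dm=\int\bar\psi\, d(\pi_*\mu_i)=\int f_N^<\, d\mu_i$, so that the averaged approximants converge to $\sum_{i=1}^s\mu(\bar C_i)\int f_N^<\, d\mu_i$.

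Combining the last two paragraphs, both $\limsup_n$ and $\liminf_n$ of $\frac 1n\sum_{k}\int f\circ G^k\, d\mu$ lie within $\omega_{\delta_N}(f)\,\mu([0,1]^2)$ of $\sum_i\mu(\bar C_i)\int f_N^<\, d\mu_i$. Letting $N\to\infty$, the modulus $\omega_{\delta_N}(f)\to 0$ and $f_N^<\nearrow f$ uniformly (both from Lemma \ref{Lemma:Ry15}), so $\int f_N^<\, d\mu_i\to\int f\, d\mu_i$; hence the limsup and liminf coincide and equal $\sum_i\mu(\bar C_i)\int f\, d\mu_i$, which is the assertion. The main obstacle is the very first reduction: because $G$ is non-invertible, $f\circ G^k$ genuinely fails to factor through $\pi$, and the whole argument depends on approximating it uniformly---and uniformly in $k$---by functions that \emph{do} factor through $\pi$, so that the factor operator $P$ and Theorem \ref{TH2} can be brought to bear.
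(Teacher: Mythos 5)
Your proof is correct and uses exactly the machinery the paper sets up for this purpose --- the $\Xi$-measurable approximants $f_N^<$ of Lemma~\ref{Lemma:Ry15} pushed down to the factor via $\pi\circ G=T\circ\pi$, combined with parts (2) and (4) of Theorem~\ref{TH2} and the identity $\pi_*\mu_i=\phi_i\cdot m$; the paper itself omits the argument and merely cites Rychlik, and your write-up is the expected one. The only (harmless) loose end is that $f_N^<$ is defined only on $G^N([0,1]^2)$, so $f_N^<\circ G^k$ requires an arbitrary bounded extension for the finitely many indices $k<N$, which do not affect the Ces\`aro limit.
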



\begin{proof} We refer to  \cite{Ry}.
\end{proof}

\bigskip



\begin{thebibliography}{99}


\bibitem{Alv} Alves, Jos\'e F., Bonatti, Christian, Viana, Marcelo, \textit{ SRB measures for partially hyperbolic
 systems whose central direction is mostly expanding}, Invent. Math. \textbf{140} (2000), no. 2, 351--398. 

\bibitem{Avi} A. Avila, S. Gou\"ezel, and M. Tsujii, \textit{ Smoothness of solenoidal attractors}, Discrete Contin. Dyn. Syst. \textbf{15} (2006), no. 1, 21--35.

\bibitem{Ben} Benedicks, Michael, Young, Lai-Sang, \textit{Sinai-Bowen-Ruelle measures for certain Hénon maps}, Invent. Math. \textbf{112} (1993), no. 3, 541--576.


\bibitem{Bil}  Billingsley, Patrick, \textit{Probability and measure}, Third edition, Wiley Series in Probability and Mathematical Statistics,
 A Wiley-Interscience Publication. John Wiley \& Sons, Inc., New York, 1995.

\bibitem{BG}  Boyarsky A.  and  G\'{o}ra P.  1997 \textit{Laws of Chaos. Invariant
Measures and Dynamical Systems in One Dimension}, Probability and its
Applications, Birkha\"{u}ser, Boston, MA.


\bibitem{Bon}
Bonatti, Christian, Díaz, Lorenzo J., Viana, Marcelo,
\textit{Dynamics beyond uniform hyperbolicity. 
A global geometric and probabilistic perspective}, Encyclopaedia of Mathematical Sciences, \textbf{102}, Mathematical Physics, III. Springer-Verlag, Berlin, 2005, Chapter 11. 


\bibitem{Cow} Cowieson, William, Young, Lai-Sang, \textit{ SRB measures as zero-noise limits}, Ergodic Theory Dynam. Systems \textbf{25} (2005), no. 4, 1115--1138.

\bibitem{MwM1}  Pawe{\l} G\'ora, Abraham Boyarsky, Zhenyang Li and Harald Proppe, \textit{Statistical and Deterministic Dynamics of Maps with Memory},
preprint, http://arxiv.org/abs/1604.06991.





\bibitem{Mah} Maharam, Dorothy,
\textit{On the planar representation of a measurable subfield}, in: Measure theory, Oberwolfach 1983 (Oberwolfach, 1983), 47--57, 
Lecture Notes in Math., \textbf{1089}, Springer, Berlin, 1984. 




\bibitem{Ry} Rychlik, Marek Ryszard, \textit{Invariant Measures and the Variational Principle for Lozi Mappings}
in
\textit{The theory of chaotic attractors. 
Dedicated to James A. Yorke in commemoration of his 60th birthday}, Edited by Brian R. Hunt, Judy A. Kennedy, Tien-Yien Li and Helena E. Nusse. Springer-Verlag, New York, 2004

\bibitem{San} S\'anchez-Salas, Fernando Jos\'e, \textit{Sinai-Ruelle-Bowen measures for piecewise hyperbolic transformations}, Divulg. Mat. \textbf{9} (2001), no. 1, 35--54. 


\bibitem{Sim} Simmons, David, \textit{Conditional measures and conditional expectation; Rohlin's disintegration theorem}, Discrete Contin. Dyn. Syst. \textbf{32} (2012), no. 7, 2565--2582.

\bibitem{Tas} Tasaki, S., Gilbert, Thomas, Dorfman, J. R., \textit{ An analytical construction of the SRB measures for baker-type maps} Chaos and irreversibility (Budapest, 1997), Chaos  \textbf{8} (1998), no. 2, 424--443. 

\bibitem{Tsu1}
Tsujii, Masato,
\textit{Physical measures for partially hyperbolic surface endomorphisms}, 
Acta Math. \textbf{194} (2005), no. 1, 37--132. 

\bibitem{Tsu2} Tsujii, Masato, \textit{ Fat solenoidal attractors}, Nonlinearity \textbf{14} (2001), no. 5, 1011--1027.


\bibitem{You} Young, Lai-Sang, \textit{Bowen-Ruelle measures for certain piecewise hyperbolic maps}, Trans. Amer. Math. Soc. \textbf{287} (1985), no. 1, 41--48. 


\end{thebibliography}
\end{document}